\newtheorem{thm}{Theorem}
\newtheorem{defn}[thm]{Definition}
\newtheorem{lem}[thm]{Lemma}
\newtheorem{prop}[thm]{Proposition}
\newtheorem{cor}[thm]{Corollary}
\newtheorem{exmp}[thm]{Example}
\newtheorem{rem}[thm]{Remark}
\newtheorem{convention}{Convention}
\definecolor{cadmiumgreen}{rgb}{0.0, 0.42, 0.24}
\definecolor{darkpastelgreen}{rgb}{0.01, 0.75, 0.24}
\newcommand{\map}[3]{{#1}:{#2}\longrightarrow{#3}}
\renewcommand{\a}{\alpha}
\renewcommand{\b}{\beta}
\newcommand{\len}[1]{\ell(#1)}
\newcommand{\N}{\mathbb{N}}
\newcommand{\pt}{\partial}
\newcommand{\Il}{\mathsf{L}}
\newcommand{\Ir}{\mathsf{R}}
\newcommand{\pow}[2]{{#1}^{#2}}
\newcommand{\emptypartition}{\varepsilon}
\newcommand{\oblak}{\mathrm{Obk}}
\newcommand{\obi}[2]{#1^{#2}}
\newcommand{\Pset}{\mathcal{P}}
\newcommand{\D}{\mathfrak{D}}
\newcommand{\df}{\partial}
\newcommand{\U}{\mathcal{U}}
\newcommand{\Nil}{\mathcal{N}}
\newcommand{\Fset}{\mathcal{F}}
\newcommand{\AP}{\mathcal{A}}
\newcommand{\BP}{\mathcal{B}}
\newcommand{\two}[1]{\mu_2(#1)}
\newcommand{\Wset}{\mathcal{W}}
\newcommand{\w}{\omega}
\newcommand{\supp}{\mathcal{S}}
\newcommand{\Ba}{\mathcal{B}}
\newcommand{\Cm}{\mathcal{C}}
\newcommand{\Uc}{\mathcal{U}}
\newcommand{\Gc}{\mathcal{G}}
\DeclareMathOperator{\GL}{GL}
\DeclareMathOperator{\im}{Im}
\DeclareMathOperator{\rank}{rank}
\newcommand{\RRset}{\mathcal{Q}}
\newcommand{\emptyf}{\emptypartition}
\newcommand{\burge}[1]{\Omega(#1)}
\newcommand{\pti}[2]{\pt^{#2}{#1}}
\newcommand{\Des}[1]{\mathrm{Des}(#1)}
\newcommand{\des}[1]{\mathrm{des}(#1)}
\newcommand{\maj}[1]{\mathrm{maj}(#1)}
\newcommand{\rev}[1]{\mathrm{val}_{#1}}
\newcommand{\lev}[1]{\mathrm{val}_{#1}}
\newcommand{\ran}[1]{\mathrm{ann}_{#1}}
\newcommand{\lan}[1]{\mathrm{ann}_{#1}}
\newcounter{pairctr}
\newcommand{\Lpair}[1]{\stepcounter{pairctr}\tikzset{tikzmark prefix=\thepairctr}\tikzmark{start}#1\tikzmark{stop}\tikz[remember picture, overlay]{
\draw[->]([shift={(.5ex,2.5ex)}]pic cs:start) to[bend left=20] ([shift={(-.5ex,2.5ex)}]pic cs:stop);}}
\newcommand{\Rpair}[1]{\stepcounter{pairctr}\tikzset{tikzmark prefix=\thepairctr}\tikzmark{start}#1\tikzmark{stop}\tikz[remember picture, overlay]{
\draw[<-]([shift={(.5ex,2.5ex)}]pic cs:start) to[bend left=20] ([shift={(-.5ex,2.5ex)}]pic cs:stop);}}
\newcommand\ara[1]{\arrow{r}{#1}}
\newcommand\arb[1]{\arrow[swap]{r}{#1}}
\newcommand\adp{\arrow[swap]{d}{\pt}}
\title{A Proof of the Box Conjecture for Commuting Pairs of Matrices}
\author[J.~Irving]{John Irving}
\address{Department of Mathematics and Computing Science, Saint Mary's University, 923 Robie St, Halifax, Nova Scotia, Canada B3N 1Z9}
\email{john.irving@smu.ca}
\author[T.~Ko\v{s}ir]{Toma\v z Ko\v sir}
\address{Faculty of Mathematics and Physics, University of Ljubljana, Jadranska 19, SI-1000 Ljub\-lja\-na, Slovenia}
\email{tomaz.kosir@fmf.uni-lj.si}
\author[M.~Mastnak]{Mitja Mastnak}
\address{Department of Mathematics and Computing Science, Saint Mary's University, 923 Robie St, Halifax, Nova Scotia, Canada B3N 1Z9}
\email{mitja.mastnak@smu.ca}
\date{\today}
\subjclass{15A27, 05A17, 15A21, 13E10, 14A25}
\begin{document}

\begin{abstract}
    We prove the Box Conjecture for pairs of commuting nilpotent matrices, as formulated by Iarrobino et al \cite{IKvSZ}. This describes the Jordan type of the dense orbit in the nilpotent commutator of a given nilpotent matrix. Our main tool is the Burge correspondence between the set of all partitions and a set of binary words \cite{Bur-1, Bur-2}.  For connection with the algebraic and geometric setup of matrices and orbits we employ some of Shayman's results on invariant subspaces of a nilpotent matrix \cite{Sha-1,Sha-2}. Our proof is valid over an arbitrary field.
\end{abstract}

\maketitle

\section{Introduction}

The study of commuting matrices has a long history (see e.g. \cite{GP, Ger, Gur, SuTy}). As such it is somewhat surprising that the question of which pairs of conjugacy classes can occur for pairs of commuting matrices has been considered only relatively recently. The question reduces to the case of commuting nilpotent matrices. Fix an infinite ground field $F$.  For a given nilpotent matrix $B$ (over $F$), the set $\Nil_B$ of all nilpotent matrices commuting with $B$ is an irreducible algebraic variety \cite[Lem. 2.3]{Bas-2}. Its intersection with one of the conjugacy classes of nilpotent matrices is a dense open subset. Let $\Pset$ be the set of all partitions of natural numbers together with the empty partition $\emptypartition$. The partition $P\in\Pset$  determined by the Jordan canonical form of $B$ is called the \emph{Jordan type} of $B$. Following Panyushev \cite{Pan} we denote by $\D(P)$ the partition determining the Jordan type of the dense conjugacy class in $\Nil_B$.

The map $\D:\Pset\to\Pset$ has been studied by several authors; see e.g.~\cite{Bas-I,BI,BIK,Kha-1,Kha-2,IKvSZ, KO,Pan} and also the recent survey by Khatami \cite{Kha-S}. Map $\D$ is idempotent \cite[Thm. 6]{KO} and its image is the set $\RRset$ of \emph{super-distinct} partitions, i.e., those whose parts differ by at least two \cite[Thm. 1.12]{BI},\cite[Thm. 2.1]{Pan}. The number of parts of $\D(P)$ was determined by Basili~\cite[Prop. 2.4]{Bas-1}. Oblak later identified the largest part of $\D(P)$ \cite[Thm. 13]{Obl-1}  and Khatami  the smallest \cite[Thm. 4.1]{Kha-2}. Oblak also conjectured a recursive process to compute $\D(P)$. Iarrobino and Khatami \cite{IK} showed that Oblak's process produces a lower bound (in dominance order), and Basili \cite{Bas-I} proved that the algorithm in fact yields $\D(P)$.

The inverse image under $\D$ is  not as well understood, owing to the apparent difficulty of unwinding Oblak's process. Iarrobino, Khatami, Van Steirteghem, and Zhao \cite{IKvSZ} described $\D^{-1}(Q)$  for partitions $Q\in\RRset$ having at most two parts and they also  formulated the \emph{Box Conjecture} \cite[Conj. 4.11]{IKvSZ} regarding the structure of $\D^{-1}(Q)$ in general.   The name of the conjecture comes from its precise statement: Given a partition $Q=(q_1,q_2,\ldots,q_k)\in\RRset$ with parts $q_1>q_2>\cdots>q_k$ differing by at least 2,   the  elements of  $\D^{-1}(Q)$ can be arranged in a box (i.e. array) of size $q_k\times (q_{k-1}-q_k-1)\times \cdots\times (q_1-q_2-1)$ such that the partition in the $(i_1,i_2,\cdots,i_k)$-th position has exactly $\sum_{j=1}^k i_j$ parts.

We prove the Box Conjecture and provide further insight into the structure of partitions in $\D^{-1}(Q)$. Our principal algebraic tool is the structure theory of invariant subspaces as presented by Shayman \cite{Sha-1,Sha-2}, but our argument hinges on a combinatorial correspondence due to Burge~\cite{Bur-1,Bur-2} that encodes  partitions as binary words.

Let $\map{\Omega}{\Pset}{\Wset}$ denote the Burge encoding, where $\Wset=(\a^*\b)^*\a$ is the set of words on the alphabet $\{\a,\b\}$ that end with a single $\a$.  (The encoding will be described in detail in Section~\ref{sec:groundwork}.)  Our main result is the following:

\begin{thm}
\label{thm:main}
Suppose $P \in \Pset$ has Burge code $\burge{P}=\w_1\cdots\w_n$.   Then $\D(P) = (q_1, q_2,\ldots,q_k)$, where $q_1 > q_2 > \cdots > q_k$ is the complete list of  indices $q$ for which $\w_q=\b$ and $\w_{q+1}=\a$.  
\end{thm}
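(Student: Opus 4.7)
My plan is to prove the theorem by induction on $|P|$, combining the recursive nature of the Burge encoding with Shayman's theory of invariant subspaces for a nilpotent matrix. As an easy first observation, the indices $q_1 > q_2 > \cdots > q_k$ read off from $\burge{P}$ always form a super-distinct partition: if $q$ and $q+1$ both satisfied the $\b\a$-pattern condition, then $\w_{q+1}$ would need to equal $\a$ (from the pattern at position $q$) and $\b$ (from the pattern at position $q+1$), a contradiction. Hence the candidate partition produced by the theorem automatically lies in $\RRset$, which matches the known image of $\D$. The base case $P=\emptypartition$ is also immediate: $\burge{\emptypartition}=\a$ has no $\b\a$-pattern, and $\D(\emptypartition)=\emptypartition$.

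Next I would exploit the structural fact that $\Wset = (\a^*\b)^*\a$ forces any non-empty Burge word to end in $\b\a$, so the smallest index $q_k$ always equals $\len{\burge{P}}-1$; this is already consistent with Khatami's formula for the smallest part of $\D(P)$. For the inductive step I would seek a natural move $P \mapsto P'$ on partitions — for instance, removing a corner box or adjusting the shortest part — which corresponds simultaneously to a prescribed modification of $\burge{P}$ (say, trimming the trailing $\b\a$ and adjusting a neighboring letter) on the combinatorial side, and to a controlled modification of $\D(P)$ on the algebraic side. Shayman's theory supplies invariant subspaces of the nilpotent operator of type $P$, and these give rise to sub- and quotient-operators whose nilpotent commutators can be analyzed recursively. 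Matching each Burge step with the appropriate Shayman invariant subspace should yield $\D(P')$ from the inductive hypothesis, and then lift to a description of $\D(P)$.

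The main obstacle lies in ensuring compatibility between the Burge combinatorics and the algebraic geometry of the commutator. The Burge encoding's peeling order on $P$ is not a priori related to any canonical invariant-subspace chain of the associated nilpotent matrix, so the delicate task is to verify letter by letter that each step of the Burge algorithm corresponds to a specific Shayman invariant subspace whose induced quotient has precisely the Jordan type predicted by the trimmed word. Handling the various local word configurations around each $\b\a$-pair — whether the preceding letter is $\a$ or another $\b$, and similarly for the following letter — will likely require a careful case analysis. Useful sanity checks throughout are that the claimed formula must specialize to the known results of Basili (the number of parts of $\D(P)$ equals the number of $\b\a$-patterns), Oblak (the largest part equals $q_1$), and Khatami (the smallest part equals $q_k = \len{\burge{P}}-1$); any proposed move $P \mapsto P'$ must be compatible with all three specializations at once, which sharply constrains the inductive scheme and should, I expect, largely dictate the correct bookkeeping.
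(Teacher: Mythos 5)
Your proposal is a plan rather than a proof, and the plan as sketched points away from the recursion that actually works. The concrete gap is that you never identify the move $P\mapsto P'$; you guess it might be ``removing a corner box or adjusting the shortest part,'' matched to ``trimming the trailing $\b\a$'' of the code. But the Burge code is built from its \emph{right} end (the tail of $\burge{P}$ records the deepest iterates of the chain), so deleting the trailing $\b\a$ does not correspond to any simple operation on $P$, and there is no natural Shayman subspace attached to it. The recursion the paper uses strips the \emph{first} letter: the operator $\pt$ (demoting the backward pairs of the frequency vector) satisfies $\burge{\pt P}=\w_2\cdots\w_n$, hence shifts every descent position down by one, i.e.\ $\Des{\pt P}=\Des{P}-1$. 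The two missing ingredients are then: (i) a uniqueness principle (Lemma~\ref{lem:uniqueness}) saying that any size-preserving map $\genop$ with $\genop(\pt P)=\genop(P)-1$ must be the descent map, which replaces your letter-by-letter case analysis entirely; and (ii) the algebraic input (Theorem~\ref{partial(P) = P(B_W)}) that for generic $A\in\Nil_B$ the restriction $B|_{\im A}$ has Jordan type exactly $\pt P$, while $A|_{\im A}$ has Jordan type $\D(P)-1$, giving $\D(\pt P)=\D(P)-1$. Without (i) and (ii) your induction has no engine: ``matching each Burge step with the appropriate Shayman invariant subspace should yield $\D(P')$'' is precisely the hard content, and nothing in the proposal supplies it.

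A secondary error: since $\burge{P}$ ends in $\b\a$, the index $n-1$ is always a descent, but it is the \emph{largest} index, so it gives $q_1=n-1$ (the largest part, Oblak's formula for the nilpotency index), not the smallest part $q_k$ as you claim; your appeal to Khatami's smallest-part formula as a sanity check is therefore misapplied. Your opening observation that the $q_i$ are super-distinct is correct and matches the paper.
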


For instance, we will find that $P=(5,3,2,2,1)$ has Burge code $\burge{P} =\b\a\b\a\a\a\b\b\b\a$  and thus Theorem~\ref{thm:main} gives $\D(P)=(9,3,1)$.  
In general, $\burge{P}$ can be obtained from $P$ via a straightforward iterative process, so Theorem~\ref{thm:main}  provides another recursive algorithm for computing $\D(P)$, distinct from (but related to) Oblak's.

The $q_i$ in Theorem~\ref{thm:main} clearly differ pairwise by at least 2,  so the theorem immediately gives $\D(P) \in \RRset$ for all $P$. 
Moreover, a simple characterization of super-distinct partitions in terms of the Burge correspondence (Proposition~\ref{prop:characterization}) will reveal that  Theorem~\ref{thm:main} implies $P \in \RRset$ if and only if $\D(P)=P$.    
  
We will also see that the number of copies of $\b$ in $\burge{P}$ equals the number of parts of $P$ (Proposition~\ref{prop:stats}).  The Box Conjecture then follows  from Theorem~\ref{thm:main} simply by specifying the words in $\Wset$ that contain substring $\b\a$  at prescribed positions.

\begin{cor}
\label{cor:boxconjecture}
Let $Q=(q_1,q_2,\ldots,q_k) \in \RRset$ and set $\delta_1 = q_k$ and $\delta_i = q_{k-i+1}-q_{k-i+2}-1$ for $2 \leq i \leq k$.  Then $\D^{-1}(Q)$ is of size $\delta_1\delta_2 \cdots\delta_k$ and consists of precisely those partitions whose Burge code is of the form
\begin{equation}
\label{eq:code}
\a^{\delta_1-i_1}\b^{i_1}\a^{\delta_2-i_2+1}\b^{i_2}
\a^{\delta_3-i_3+1} \b^{i_3} \cdots \a^{\delta_k-i_k+1} \b^{i_k}\a,
\end{equation}
where $(i_1,\ldots,i_k) \in [1,\delta_1] \times [1,\delta_2] \times \cdots \times [1,\delta_k]$.  Moreover, the partition determined by~\eqref{eq:code} has exactly $\sum_j i_j$ parts.
\end{cor}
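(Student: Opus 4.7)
The plan is to reduce the statement to a purely combinatorial one about words in $\Wset$ and then carry out a direct syntactic decomposition. By Theorem~\ref{thm:main}, a partition $P$ lies in $\D^{-1}(Q)$ if and only if its Burge code $\burge{P}=w_1\cdots w_n$ has the property that the set of indices $q$ with $w_q = \b$ and $w_{q+1} = \a$ is exactly $\{q_1, \ldots, q_k\}$. Since the Burge encoding is a bijection onto $\Wset$, the task reduces to enumerating and parameterizing the words in $\Wset$ whose ``$\b\a$-positions'' are precisely $\{q_1 > q_2 > \cdots > q_k\}$.

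The key structural observation is that no $\b\a$ substring may appear outside these prescribed positions. Since $k \geq 1$, a word $w \in (\a^*\b)^*\a$ with at least one $\b\a$-position must end in $\b\a$, so $q_1 = n-1$ and $w_n = \a$ is the trailing $\a$ of~\eqref{eq:code}. For each $j \in \{1, \ldots, k\}$, the infix occupying positions $q_{j+1}+1, \ldots, q_j$ (with the convention $q_{k+1}:=0$) contains no $\b\a$ and is therefore of the form $\a^{u_j}\b^{v_j}$ with $u_j + v_j = q_j - q_{j+1}$. The condition $w_{q_j}=\b$ forces $v_j \geq 1$, and for $j < k$ the condition $w_{q_{j+1}+1}=\a$ forces $u_j \geq 1$; only $u_k \geq 0$ is unconstrained. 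Relabeling via $i_j := v_{k-j+1}$ and substituting $\delta_1 = q_k$, $\delta_j = q_{k-j+1}-q_{k-j+2}-1$ for $j \geq 2$ puts the word in the exact shape~\eqref{eq:code}, with $i_1 \in [1,\delta_1]$ and $i_j \in [1,\delta_j]$ for $j \geq 2$.

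A brief telescoping check confirms the converse: for any tuple $(i_1,\ldots,i_k)$ in the prescribed product of intervals, the last $\b$ of the $j$-th block of~\eqref{eq:code} sits at position $q_{k-j+1}$, so the resulting word does lie in $\Wset$ and has precisely the required $\b\a$-positions. The count $|\D^{-1}(Q)| = \delta_1 \cdots \delta_k$ is then immediate, and the assertion about the number of parts follows from Proposition~\ref{prop:stats}: the word in~\eqref{eq:code} manifestly contains $\sum_j i_j$ copies of $\b$. The only point requiring any care is the index-reversal bookkeeping relating $(q_1,\ldots,q_k)$ to $(\delta_1,\ldots,\delta_k)$; beyond this, the argument is a direct decomposition of words in $\Wset$ by their $\b\a$-positions and presents no substantive obstacle.
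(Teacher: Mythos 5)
Your proof is correct and follows exactly the route the paper intends: the paper derives Corollary~\ref{cor:boxconjecture} from Theorem~\ref{thm:main} by "specifying the words in $\Wset$ that contain substring $\b\a$ at prescribed positions," leaving the block decomposition, the inequalities $v_j\geq 1$, $u_j\geq 1$ (for $j<k$), and the appeal to Proposition~\ref{prop:stats}(1) implicit. Your write-up simply makes that syntactic decomposition and the index bookkeeping explicit, and both directions check out.
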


We noted above that super-distinct partitions are stable under $\D$, that is, $Q \in \D^{-1}(Q)$ for all $Q \in \RRset$.  This  corresponds to taking $(i_1,\ldots,i_k)=(1,\ldots,1)$ in Corollary~\ref{cor:boxconjecture}.

\begin{exmp}\label{exmp-3}
For $Q=(10,7,3) \in \RRset$ we have $(\delta_1,\delta_2,\delta_3)=(3,7-3-1,10-7-1)=(3,3,2)$, so $\D^{-1}(Q)$ contains $18$ partitions indexed by coordinates in the box $[1,3] \times [1,3] \times [1,2]$. These are displayed in Figure~\ref{fig:boxexample}. \end{exmp}

\begin{figure}
\label{fig:boxexample}
\centering
{
\begin{tabular}{l|l|l|l}
$(i_1,i_2,i_3)$ &  code $\w$ &  partition $\Omega^{-1}(\w)$ & \# parts\\
\hline
  $ (1, 1, 1) $ & $ \a\a\b\a\a\a\b\a\a\b\a $ & 
  $ [ 10,7,3] $ & $3 $ \\  
  $ (2, 1, 1) $ & $ \a\b\b\a\a\a\b\a\a\b\a $ &
  $ [ 10,7,2,1] $ & $4 $ \\  
  $ (3, 1, 1) $ & $ \b\b\b\a\a\a\b\a\a\b\a $ & 
  $ [ 10,7,\pow{1}{3}] $ & $5 $ \\  
  $ (1, 2, 1) $ & $ \a\a\b\a\a\b\b\a\a\b\a $ & 
  $ [ 10,5,3,2] $ & $4 $ \\  
  $ (2, 2, 1) $ & $ \a\b\b\a\a\b\b\a\a\b\a $ & 
  $ [ 10,4,3,2,1] $ & $5 $ \\  
  $ (3, 2, 1) $ & $ \b\b\b\a\a\b\b\a\a\b\a $ & 
  $ [ 10,4,3,\pow{1}{3}] $ & $6 $ \\  
  $ (1, 3, 1) $ & $ \a\a\b\a\b\b\b\a\a\b\a $ & 
  $ [ 10,5,\pow{2}{2},1] $ & $5 $ \\  
  $ (2, 3, 1) $ & $ \a\b\b\a\b\b\b\a\a\b\a $ & 
  $ [ 10,5,2,\pow{1}{3}] $ & $6 $ \\  
  $ (3, 3, 1) $ & $ \b\b\b\a\b\b\b\a\a\b\a $ & 
  $ [ 10,5,\pow{1}{5}] $ & $7 $ \\  
  $ (1, 1, 2) $ & $ \a\a\b\a\a\a\b\a\b\b\a $ &
  $ [ 9,5,\pow{3}{2}] $ & $4 $ \\  
  $ (2, 1, 2) $ & $ \a\b\b\a\a\a\b\a\b\b\a $ & 
  $ [ 9,\pow{4}{2},2,1] $ & $5 $ \\  
  $ (3, 1, 2) $ & $ \b\b\b\a\a\a\b\a\b\b\a $ & 
  $ [ 9,\pow{4}{2},\pow{1}{3}] $ & $6 $ \\  
  $ (1, 2, 2) $ & $ \a\a\b\a\a\b\b\a\b\b\a $ & 
  $ [ 9,5,\pow{2}{3}] $ & $5 $ \\  
  $ (2, 2, 2) $ & $ \a\b\b\a\a\b\b\a\b\b\a $ & 
  $ [ 9,4,3,2,\pow{1}{2}] $ & $6 $ \\  
  $ (3, 2, 2) $ & $ \b\b\b\a\a\b\b\a\b\b\a $ & 
  $ [ 9,4,3,\pow{1}{4}] $ & $7 $ \\  
  $ (1, 3, 2) $ & $ \a\a\b\a\b\b\b\a\b\b\a $ & 
  $ [ 9,5,\pow{2}{2},\pow{1}{2}] $ & $6 $ \\  
  $ (2, 3, 2) $ & $ \a\b\b\a\b\b\b\a\b\b\a $ & 
  $ [ 9,5,2,\pow{1}{4}] $ & $7 $ \\  
  $ (3, 3, 2) $ & $ \b\b\b\a\b\b\b\a\b\b\a $ & 
  $ [ 9,5,1^6] $ & $8 $ 
 \end{tabular}
 }
 \caption{The elements of $\D^{-1}(10,7,3)$ and the corresponding Burge codes, indexed as in Corollary~\ref{cor:boxconjecture} by their coordinates $(i_1,i_2,i_3) \in [1,3] \times [1,3] \times [1,2]$.}
 \end{figure}

Theorem~\ref{thm:main} will be proved in Section~\ref{sec:main} after we have established the necessary combinatorial framework in Section~\ref{sec:groundwork}.  Our proof  does not depend on the nature of the underlying field $F$, except insofar as the definition of $\D(P)$ requires the assumption that $F$ is infinite. (We explain an extension to  finite fields in  \S\ref{finite fields}.) In Section~\ref{sec:oblak} we explore the interaction between the Burge encoding and Oblak's recursive  scheme for computing $\D(P)$. This leads to a new proof (independent of Basili \cite{Bas-I}) that  Oblak's algorithm  does indeed  output $\D(P)$.

\subsection*{Related Work}

Other closely related topics have also been studied over the last several decades. For some partial results on pairs of partitions describing commuting nilpotent matrices see \cite{BDKOS, Obl-2}. Irreducibility of the variety of pairs of commuting nilpotent matrices was proved by Baranovsky \cite{Bar} (see also Basili \cite{Bas-2}). In the general Lie algebra setting, the question of irreducibility of the nilpotent commuting variety was resolved by Premet \cite{Pre}. Nilpotent and other commuting varieties of triples (and $d$-tuples, for larger $d$) of  matrices have also been considered, see e.g., \cite{Gur,GS,HaHy,Han,HoOm,JeSi,LNS,NgSi,Siv-1,Siv-2,Siv-3}. The connection between commuting nilpotent matrices and Artinian algebras was studied extensively in recent years (e.g. \cite{IMM}); see \cite{AIM-survey} for a survey of the topic. Finally, we note that results on commuting matrices have been applied to multiparameter eigenvalue problems \cite{Atk,Kos}, the area of research that originated from applications to boundary value problems for elliptic partial differential equations \cite{Vol}.

Burge's correspondence provides a natural description of all Jordan types of matrices that have equal dense or dominant orbit in the nilpotent commutator. It is an example of a combinatorial description of objects that are of great interest in algebra and geometry, and have interesting applications in physics \cite{JaMa}.

\section{Burge Codes and the Descent Map}
\label{sec:groundwork}

The goal of this section is to describe Burge's elegant bijection \cite{Bur-1} between partitions $\Pset$  and  binary words $\Wset$.  This is best expressed in terms of the frequency representation of partitions, which we introduce in \S\ref{sec:prelim}.  The correspondence itself is defined in \S\ref{sec:burge}. All results presented therein can be found (at least implicitly) in~\cite{Bur-1}, but we offer a complete exposition in the interests of fixing notation and making this article self-contained.  In \S\ref{sec:universal} we define the \emph{descent map} on $\Pset$. This key transformation arises naturally from Burge's construction and lies at the heart of our results.  Finally, in \S\ref{sec:burgeadditional}, we describe how another bijection of Burge transforms the elements of $\D^{-1}(Q)$ into partitions whose diagonal hook lengths are given by $Q$.

\subsection{Preliminaries}
\label{sec:prelim}

Recall that a \emph{partition} $P \in \Pset$  is a weakly decreasing sequence $(p_1,\ldots,p_k)$ of positive integers. The $p_i$ are called the \emph{parts} of $P$. 
The \emph{size} of $P$ is the sum of the parts,  $|P|:=\sum_i p_i$, and the \emph{length} of $P$ is the number of parts,   $\len{P}:=k$. The \emph{empty partition} is the unique element $\emptypartition \in \Pset$ of size (and length) 0. characterized by $|\emptypartition|=0$ or $\len{\emptypartition}=0$. Partitions can equivalently be regarded as unordered multisets of positive integers. We use the standard  multiset notation  $[1^{f_1}, 2^{f_2} \cdots]$ to denote the partition whose parts consist of $f_1$ copies of 1, $f_2$ copies of 2, etc.

The \emph{2-measure} of a partition $P$, denoted $\two{P}$, is the maximum length of a super-distinct subpartition of $P$.  For example, $P=(8,7,4,4,3,2,2,1)$ contains the subpartition $(7,4,1) \in \RRset$ of length 3 and none longer, so $\two{P}=3$.  This statistic was recently studied  by Andrews et al~\cite{ACZ} through the lens of $q$-series. Here it will play a fundamental combinatorial role.  Clearly we have $0 \leq \two{P} \leq \len{P}$, with $\two{P}=0 \iff P=\emptypartition$, and $\two{P}=\len{P} \iff P \in \RRset$.

Let $\Fset$ be the set of all finitely supported sequences of nonnegative integers.  We will work extensively with partitions via their ``frequency'' representations in $\Fset$ given by the trivial correspondence  $[1^{f_1} 2^{f_2} \cdots] \leftrightarrow (f_1,f_2,\ldots)$.  This identification of $\Fset$ and $\Pset$ should be kept in mind throughout, but when we wish to be explicit we  will write $f(P)$ for the frequency sequence of $P \in \Pset$ and  $P(f)$ for the partition corresponding to $f \in \Fset$.

\begin{convention} When working with generic $f \in \Fset$, we adopt the convention that $f_0=0$.  When working with specific $f \in \Fset$,  trailing zeroes will often be omitted; e.g. $(0,3,2,0,1) = (0,3,2,0,1,0,0,\ldots)$.
\end{convention}

We now introduce various nomenclature related to $\Fset$.  The \emph{support} of $f \in \Fset$ will be denoted  $\supp(f):=\{i \geq 1 \,:\, f_i \neq 0\}$. The size and length statistics on $\Pset$ are extended to $\Fset$ in the obvious way, namely 
$$
	\textstyle
	|f| :=  |P(f)| = \sum_i if_i, \qquad\text{and}\qquad
	\len{f} := \len{P(f)} = \sum_i f_i.
$$
Similarly we let $\two{f}:=\two{P(f)}$. Note that this is the maximum size of a subset of $\supp(f)$ that contains no consecutive pairs $\{i,i+1\}$.

A \emph{spread} of $f$ is defined to be  a maximal interval $[i,j] \subseteq \supp(f)$, and
a spread  $[i,i]=\{i\}$ of size 1 is said to be \emph{trivial}. Finally, we define  sets 
\begin{align*}
\textstyle
	\Il(f) &:= \bigcup \{i, i+2, \ldots, i+2\lfloor\tfrac{j-i}{2}\rfloor\} \\
	\Ir(f) &:= \bigcup \{j, j-2, \ldots, j-2\lfloor\tfrac{j-i}{2}\rfloor\}, 
\end{align*}
where the unions extend over all spreads $[i,j]$ of $f$. The importance of $\Il(f)$ and $\Ir(f)$ will be apparent shortly.  For now we observe only that they are  of equal size, namely $\two{f}=\sum \lceil\frac{j-i+1}{2}\rceil$. 

\begin{exmp}
Let $f=(2,1,0,3,2,2,0,0,1)$.  The corresponding partition is  $P=P(f) = [9,6^2,5^2,4^3,2,1^2]$, and $|f|=|P|=47$ and $\len{f}=\len{P}=11$.  The spreads of $f$ are $\{1,2\}$, $\{4,5,6\}$ and $\{9\}$, so  $\Il(f)=\{1,4,6,9\}$, $\Ir(f)=\{2, 4, 6, 9\}$ and $\two{f}=\two{P}=4$. 
\end{exmp}
 
\subsection{The Burge Correspondence}
\label{sec:burge}

We begin by recasting the sets $\Il(f)$ and $\Ir(f)$, defined above, in terms of certain partial pairings of the entries of $f \in \Fset$. Observe that the elements $i_1 < i_2 < \ldots < i_m$ of $\Il(f)$ specify the pairs $(f_{i_1},  f_{i_1+1})$, \ldots,  $(f_{i_m}, f_{i_m+1})$  that result from parsing $f$ from left-to-right and grouping  consecutive entries $f_i,f_{i+1}$ with $f_i > 0$. We call these the \emph{forward pairs} of $f$. Similarly, the elements $j_1 > j_2 > \ldots > j_m$ of $\Ir(f)$ determine the  \emph{backward} pairs $(f_{j_1-1},f_{j_1})$, $\ldots$, $(f_{j_m-1},f_{j_m})$ obtained by parsing $f$ from right-to-left and grouping consecutive entries $f_{j},f_{j-1}$ with $f_j > 0$. Here we rely on the convention $f_0=0$ to allow for the  degenerate backward pair $(f_0,f_1)$, which arises if and only if $1 \in \Ir(f)$.

\begin{exmp}
\label{exmp:pairing}
For $f = (2,2,1,3,1,0,4,0,0,2,1)$ we have $\Il(f)=\{1,3,5,7,10\}$ and $\Ir(f)=\{1,3,5,7,11\}$. The forward/backward pairs of $f$ are indicated below  with arrows  pointed in the direction of parsing.
$$
	\underset{\text{forward}}{{\color{red}0\,\,}(\Lpair{2,2},\Lpair{1,3},\Lpair{1,0},\Lpair{4,0},0,\Lpair{2,1})}
\qquad\qquad
	\underset{\text{backward}}{\Rpair{{\color{red}0\,\,} (2},\Rpair{2,1},\Rpair{3,1},\Rpair{0,4},0,0,\Rpair{2,1})}
$$
The fictional entry $f_0=0$ has been prepended in red. Observe that every nonzero entry of $f$ appears in one forward and one backward pair, while all unpaired entries of $f$ are 0.
\end{exmp}

Let $\AP := \{f \in \Fset \,:\, 1 \not \in \Ir(f)\}$ and $\BP := \{f \in \Fset \,:\, 1 \in \Ir(f)\}$, observing that these sets partition $\Fset$. Also make note of the equivalent characterizations  $f \in \BP \iff (f_0,f_1)$ is a backward pair $\iff$ 1 is contained in a spread of $f$ of odd size.

We now  introduce two related transformations $\map{\a}{\Fset}{\AP}$ and $\map{\b}{\Fset}{\BP}$,  along with a mapping $\map{\pt}{\Fset}{\Fset}$ that undoes them.  Each of these acts as a sequence of raising/lowering operators on the forward or backward pairs of $f$.  The precise recipes are as follows:
\begin{itemize}
\item	$\a(f)$ is obtained from $f$ by replacing $(f_{i}, f_{i+1})$ with $(f_{i}-1, f_{i+1}+1)$ for each $i \in \Il(f)$.

\item	$\b(f) := (f_1+1, \a(f_2,f_3,\ldots))$

\item	$\pt(f)$ is obtained from $f$ by replacing $(f_{j-1}, f_{j})$ with $(f_{j-1}+1, f_{j}-1)$ for each $j \in \Ir(f)$, where this is understood to mean  only that  $f_1$ is  reduced by 1 in the case $j=1$ (i.e., the fictional $f_0$ is ignored). 

\end{itemize}

In effect, $\a$ scans $f$ from left-to-right and ``promotes'' each forward pair $(f_{i}, f_{i+1})$ to $(f_{i}-1, f_{i+1}+1)$, thereby transforming the forward pairs of $f$ into the backward pairs of $\a(f)$. (That is, $i \in \Il(f) \iff i+1 \in \Ir(\a(f))$.)  In particular, for any $f \in \Fset$ we have $1 \not \in \Ir(\a(f))$ and hence $1 \in \Ir(\b(f))$, validating the claim that $\a$ and $\b$  map $\Fset$ into $\AP$ and $\BP$, respectively.  Moreover, $f$ can be recovered from either $\a(f)$ or $\b(f)$ by scanning from right-to-left and ``demoting'' backward pairs. This is exactly the action of $\pt$.

\begin{lem}
Let $\a,\b,\pt$ be defined on $\Fset$ as above.  Then $\a$ and $\b$ are bijections from $\Fset$ to $\AP$ and $\BP$, respectively, while $\map{\pt}{\Fset}{\Fset}$ is 2-to-1 and restricts to $\a^{-1}$ on $\AP$ and $\b^{-1}$ on $\BP$.
\end{lem}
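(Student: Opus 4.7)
The plan is to establish in order: (a) $\alpha(f) \in \AP$ and $\beta(f) \in \BP$ for every $f \in \Fset$; (b) $\partial \circ \alpha = \partial \circ \beta = \mathrm{id}_{\Fset}$; (c) $\alpha \circ \partial|_\AP = \mathrm{id}_\AP$ and $\beta \circ \partial|_\BP = \mathrm{id}_\BP$; and (d) $\partial$ is $2$-to-$1$. Claims (b) and (c) together force $\alpha$ and $\beta$ to be bijections onto $\AP$ and $\BP$ respectively, with inverses given by the appropriate restrictions of $\partial$. Claim (d) will then follow immediately: for any $f \in \Fset$, the elements $\alpha(f) \in \AP$ and $\beta(f) \in \BP$ are distinct (the sets $\AP, \BP$ are disjoint) and both lie in $\partial^{-1}(f)$ by (b); conversely any $g \in \partial^{-1}(f)$ lies in $\AP \cup \BP = \Fset$, so by (c) it must equal $\alpha(f)$ or $\beta(f)$.

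The heart of the matter is the combinatorial identity
\[
\Ir(\alpha(f)) \;=\; \{i+1 : i \in \Il(f)\},
\]
which says $\alpha$ converts each forward pair $(f_i, f_{i+1})$ of $f$ into a backward pair of $\alpha(f)$ with right endpoint at $i+1$. Granting this, $\Il(f) + 1 \subseteq \{2,3,\ldots\}$ immediately gives $1 \notin \Ir(\alpha(f))$, so $\alpha(f) \in \AP$; and $\partial$ applied to $\alpha(f)$ demotes exactly these backward pairs, reversing each promotion $(f_i, f_{i+1}) \mapsto (f_i-1, f_{i+1}+1)$ and recovering $f$. For the $\beta$-parts of (a) and (b), I would apply the same identity to the truncation $(f_2, f_3, \ldots)$ to handle positions $\geq 2$, then check directly that prepending $f_1 + 1$ produces a spread of $\beta(f)$ containing $1$ of odd size (forcing $1 \in \Ir(\beta(f))$ and $\beta(f) \in \BP$) and that $\partial$ reduces this first entry from $f_1 + 1$ back to $f_1$ through the degenerate backward pair at $j=1$. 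Claim (c) follows from the mirror identity $\Il(\partial(g)) = \{j-1 : j \in \Ir(g)\}$ for $g \in \AP$ (with an analogous statement for $g \in \BP$, where the degenerate pair at $j=1$ contributes no element to $\Il(\partial(g))$), proved by the same type of analysis.

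The principal obstacle is verifying the identity $\Ir(\alpha(f)) = \Il(f) + 1$ by a careful spread-by-spread argument. The subtlety is that the spread structure of $\alpha(f)$ can differ nontrivially from that of $f$: two consecutive spreads of $f$ separated by a single zero may merge in $\alpha(f)$ once that zero is raised to $1$, while a spread of $f$ may fragment whenever a lowered entry equal to $1$ becomes $0$. What must be checked is that, regardless of which of these structural changes occur, the resulting set of right endpoints of backward pairs of $\alpha(f)$ is invariantly described by $\Il(f)+1$. This reduces to a finite case analysis on each spread $[i,j]$ of $f$, with cases indexed by the parity of $j-i$ and by which of the lowered entries $f_i, f_{i+2}, \ldots$ equal $1$; within each case the alternating raise/lower pattern of $\alpha$ makes the new $\Ir$-structure directly visible.
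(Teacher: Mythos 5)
Your proposal is correct and follows essentially the same route as the paper, which justifies this lemma only by the informal observation that $\a$ turns the forward pairs of $f$ into the backward pairs of $\a(f)$ (i.e.\ $i \in \Il(f) \iff i+1 \in \Ir(\a(f))$) and that $\pt$ demotes backward pairs; your identity $\Ir(\a(f)) = \Il(f)+1$, the reduction of the $\b$-case to the truncation $(f_2,f_3,\ldots)$, and the deduction of the 2-to-1 property from disjointness of $\AP$ and $\BP$ are exactly this argument made explicit. You correctly isolate the one point the paper leaves unverified (that merging/fragmenting of spreads does not disturb the identity), and your sketched spread-by-spread case analysis is the right way to close it.
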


\begin{exmp}
\label{exmp:burgemapping}
Let $f = (2,2,1,3,1,0,4,0,0,2,1)$ as in  Example~\ref{exmp:pairing}.  Then
\begin{align*}
\a(f)
&=\a(\Lpair{2,2},\Lpair{1,3},\Lpair{1,0},\Lpair{4,0},0,\Lpair{2,1})
=(1,3,0,4,0,1,3,1,0,1,2) \\[1.5ex]
	\b(f) &= (2+1, \a(\Lpair{2,1},\Lpair{3,1},0,\Lpair{4,0},0,\Lpair{2,1}))
	 = (3, 1, 2, 2, 2, 0, 3, 1, 0, 1, 2) \\[1.5ex]
	 \pt(f)&=
	\Rpair{\pt(2},\Rpair{2,1},\Rpair{3,1},   		
	\Rpair{0,4},0,0,\Rpair{2,1}) 
	= (1,3,0,4,0,1,3,0,0,3,0).
\end{align*}
Forward/backward pairs are promoted/demoted by transferring one unit along each arrow. For the degenerate backward pair $(f_0,f_1)$, which corresponds to $1 \in \Ir(f)$ (i.e., $f \in \BP$), demotion by $\pt$ amounts to reducing $f_1$ by 1.
We leave it to the reader to verify that $\pt(\a(f))=\pt(\b(f))=\b(\pt(f))=f$.  Note, however, that 
$\a(\pt(f))=(0,4,0,3,1,0,4,0,0,2,1) \neq f$. 
This inequality is due to the fact that $f \not \in \AP$.
 
\end{exmp}


Clearly we have $0 \leq |\pt(f)|<|f|$ for all $f \neq \emptyf$. Applying $\pt$ repeatedly to any $f \in \Fset$ will therefore  result in $\pt^{k}f=\emptyf$ for some least positive integer $k \geq 1$. This leads to the following  definitions.

\begin{defn}
We define the \emph{Burge chain} of $f\in \Fset$ to be the sequence
$\pti{f}{0}, \pti{f}{1}, \pti{f}{2}, \ldots, \pti{f}{k}$, where $k$ is
 the smallest positive integer satisfying $\pti{f}{k}=\emptyf$.
The  \emph{Burge code} of $f$ is the binary word $\burge{f} := \w_1 \w_2 \cdots \w_{k+1} \in \{\a,\b\}^*$ defined by 
$$
	\w_i = \begin{cases}
	\a & \text{if $\pti{f}{i-1} \in \AP$,} \\
	\b & \text{if $\pti{f}{i-1} \in \BP$.}
	\end{cases}
$$ 
\end{defn}

So $f=\emptyf$ has trivial Burge chain $\emptyf$ and Burge code $\burge{\emptyf}=\a$, whereas the  chains/codes of all other sequences $f$ are of length at least 2. In fact since $\pt f = \emptyf$ only for $f = \emptyf$ and $f=(1)$, the chain of every $f \neq \emptyf$  ends with $\pti{f}{k-1} = (1) \in \BP$ and $\pti{f}{k} = \emptyf \in \AP$, and thus the code $\burge{f}$ ends with $\w_{k}\w_{k+1}=\b\a$.

Since $\pt|_{\AP}=\a^{-1}$ and $\pt|_{\BP}=\b^{-1}$, the definition of $\burge{f}=\w_1\cdots\w_n$   ensures that $\pti{f}{i-1}= (\w_i \circ \pt)(\pti{f}{i-1})=\w_i(\pti{f}{i})$ for all $i$. Thus we have
$$
f =\w_1(\pti{f}{1})=\w_1\w_2(\pti{f}{2})=\w_1\w_2\w_3(\pti{f}{3})=\cdots=(\w_1\w_2\cdots\w_n)(\emptyf),
$$
where the products of  $\w_i$ are to be interpreted as functional composition in the usual right-to-left order.  
That is, the right-to-left reading of $\burge{f}$ specifies the unique manner by which $f$ can be ``built'' from $\emptyf$ via iterative applications of $\a$ and $\b$, beginning with a single application of $\a$.

\begin{prop}
The Burge encoding $f \mapsto \burge{f}$ is a one-one correspondence between $\Fset$ and the set $\Wset:=(\a^*\b)^*\a$ of all finite words on  $\{\a,\b\}$ that end with a singleton $\a$.
\end{prop}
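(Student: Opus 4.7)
The plan is to prove injectivity of $\burge{\,\cdot\,}$ and that its image is exactly $\Wset$. Injectivity is immediate from the displayed identity $f = (\w_1 \w_2 \cdots \w_n)(\emptyf)$ established just above the statement: this formula reconstructs $f$ uniquely from its code, so $\burge{f} = \burge{g}$ forces $f = g$.

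For the containment $\burge{\Fset} \subseteq \Wset$, I first unpack the regular expression. Any nonempty word in $(\a^*\b)^*$ must end in $\b$, and conversely any word over $\{\a,\b\}$ ending in $\b$ factors as some $\a^*\b \cdots \a^*\b$ by scanning left-to-right and closing a factor after each $\b$. Hence $\Wset$ is precisely $\{\a\}$ together with all words ending in $\b\a$. The paper has already recorded both cases for Burge codes: $\burge{\emptyf}=\a$, and for $f \neq \emptyf$ the Burge chain ends with $\pt^{k-1}f = (1) \in \BP$ and $\pt^k f = \emptyf \in \AP$, so $\w_k\w_{k+1} = \b\a$.

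The only substantive step is surjectivity. Given $w = \w_1 \cdots \w_n \in \Wset$, the natural candidate preimage is $f := w(\emptyf)$; I must verify $\burge{f} = w$. Setting $g_i := (\w_{i+1} \cdots \w_n)(\emptyf)$ for $0 \leq i \leq n$ (so $g_0 = f$ and $g_n = \emptyf$), I claim by induction that $\pt^i f = g_i$. The inductive step $\pt g_{i-1} = g_i$ reduces to the identities $\pt|_{\AP} = \a^{-1}$ and $\pt|_{\BP} = \b^{-1}$ from the preceding lemma, which moreover certify that $g_{i-1}$ lies in $\AP$ or $\BP$ according as $\w_i = \a$ or $\b$, matching the $i$-th letter of $\burge{f}$ with $\w_i$.

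The main obstacle is verifying that the Burge chain of $f$ terminates at precisely the expected position: $g_{n-1} = \emptyf$ while $g_i \neq \emptyf$ for $0 \leq i \leq n-2$. The key point is that the image of $\b$ is $\BP$, which is disjoint from $\{\emptyf\} \subset \AP$, so applying $\b$ never produces $\emptyf$; consequently $g_i = \emptyf$ forces every subsequent letter $\w_{i+1},\ldots,\w_n$ to equal $\a$. When $w \in \Wset \setminus \{\a\}$, the penultimate letter $\w_{n-1} = \b$ precludes this for every $i \leq n-2$, so the chain first reaches $\emptyf$ at index $n-1$, as required. The trivial case $w=\a$ is handled directly: $f = \a(\emptyf) = \emptyf$ and $\burge{\emptyf} = \a$.
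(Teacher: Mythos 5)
Your proof is correct and follows the same route the paper takes implicitly: injectivity from the reconstruction identity $f=(\w_1\cdots\w_n)(\emptyf)$, containment in $\Wset$ from the observation that nontrivial codes end in $\b\a$, and surjectivity by defining $f:=w(\emptyf)$ and checking the chain. The paper leaves the surjectivity verification (in particular, that the chain first reaches $\emptyf$ at the right index) unstated, and your argument via the fact that $\b$ never outputs $\emptyf$ fills exactly that gap correctly.
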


\begin{exmp}
\label{exmp:burge}
Let $f=(1,2,1,0,1)$. The Burge chain of $f$ is displayed below, along with the values of $\w_i$ along the chain.  The Burge code $\burge{f}=\b\a\b\a\a\a\b\b\b\a$ is obtained by concatenating these values.
$$
\begin{array}{c|l|l}
i & \pti{f}{i} & \w_{i+1} \\
\hline
0 & (1,2,1,0,1) & \b \\
1 & (0,3,0,1) & \a \\
2 & (1,2,1) & \b \\
3 & (0,3) & \a \\
4 & (1,2) & \a \\
5 & (2,1) & \a \\
6 & (3) & \b \\
7 & (2) & \b \\
8 & (1) & \b \\
9 & \emptyf & \a
\end{array}
$$
Conversely, given the word $\w=\a\b\b\a\b\a \in \Wset$ we  build its corresponding partition by iterating $\a$ and $\b$ as follows: 
$$
\emptyf \overset{\a}{\mapsto} 
\emptyf \overset{\b}{\mapsto} 
(1) \overset{\a}{\mapsto} 
(0,1) \overset{\b}{\mapsto} 
(1,0,1) \overset{\b}{\mapsto}
(2,0,0,1)  \overset{\a}{\mapsto} (1,1,0,0,1).$$
Thus $\w$ is the Burge code of $f=(1,1,0,0,1)$. 
\end{exmp}

We now describe how size, length and 2-measure are affected by $\pt$, and use this to express $|f|, \len{f}$ and $\two{f}$  in terms of familiar statistics on $\burge{f}$.  For convenience we  hereon  omit parentheses and write $\pt f$ instead of $\pt(f)$. 

\begin{lem}
\label{lem:stats}
For any $f \in \Fset$ we have: 
\begin{enumerate}
\item	$\displaystyle
	\len{\pt f} =
	\begin{cases} 
	\len{f} - 1 &\text{if $f \in \BP$} \\
	\len{f} &\text{otherwise.}\\
	\end{cases}$
\item	$\displaystyle |\pt f| = |f| - \two{f}$

\item 	$\displaystyle
	\two{\pt f} =
	\begin{cases}
	\two{f} - 1 &\text{if $f \in \BP$ and $\pt f \in \AP$} \\
	\two{f} &\text{otherwise.}
	\end{cases}$
\end{enumerate}
\end{lem}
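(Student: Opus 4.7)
The plan is to treat each of the three claims by direct computation from the explicit recipe for $\pt$. All three reduce to local analyses on the individual spreads of $f$, with the degenerate pair at $k=1$ (present iff $f \in \BP$) singled out.

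For part~(1), I would write $\len{f} = \sum_i f_i$ and observe that every non-degenerate backward pair $(f_{k-1},f_k)\mapsto(f_{k-1}+1,f_k-1)$ contributes $+1-1=0$ to this sum, while the degenerate pair contributes $-1$ via the decrement of $f_1$. This immediately yields the stated dichotomy. For part~(2), the same bookkeeping applied to $|f|=\sum_i i f_i$ gives a change of $(k-1)-k=-1$ per non-degenerate pair and $-1$ again from the degenerate pair, for a total shift of $-|\Ir(f)|$; the text's earlier identity $|\Ir(f)|=\two{f}$ then finishes this part.

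Part~(3) is the substantive one, and I would prove it by a spread-by-spread analysis. Within a spread $[i,j]$ of $f$, the map $\pt$ modifies only the entries $f_{i-1},f_i,\ldots,f_j$, alternately incrementing and decrementing, with the pattern determined by the parity of $j-i$. For a spread with $i \geq 2$, I would split into four sub-cases according to whether the two endpoint values altered by $\pt$ equal $1$ or exceed $1$; in each sub-case the spread either stays the same size, shifts by one position, enlarges by one, or splits into a trivial spread plus a shorter one, but the contribution $\lceil(j-i+1)/2\rceil$ of the spread to $\two{\cdot}$ is always preserved by a small ceiling-arithmetic check. Thus spreads not containing~$1$ never affect $\two{\cdot}$.

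The spread containing~$1$, say $[1,j]$, is where the degenerate behavior at $k=1$ matters, since $f_0$ is not incremented. If $j$ is even then $f \in \AP$ and the analysis proceeds exactly as in the interior case, with $\two{\cdot}$ preserved regardless of whether $\pt f$ lands in $\AP$ or $\BP$. If $j$ is odd then $f \in \BP$, and here the missing increment at position $0$ breaks the balance: I would verify that the contribution $\lceil j/2\rceil=(j+1)/2$ is retained precisely when both $f_1>1$ and $f_j>1$, in which case the spread $[1,j]$ is unchanged and $\pt f \in \BP$; in every other configuration (including $j=1$, $f_1=1$) the spread transforms to have ceiling contribution $(j-1)/2$, and either $1$ leaves the support or $[1,j]$ becomes an even-size spread, placing $\pt f \in \AP$. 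Assembling these cases gives exactly the trichotomy in~(3). The main obstacle is bookkeeping: keeping the direction of $\pt$ straight (which endpoint of a spread is decremented, which is incremented) and organizing the sub-cases so that the equivalence ``$\two{\cdot}$ drops by one'' $\iff$ ``$f \in \BP$ and $\pt f \in \AP$'' is visible rather than a coincidence of the enumeration.
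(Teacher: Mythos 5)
Your treatments of parts (1) and (2) are correct and coincide with the paper's: each demotion $(f_{j-1},f_j)\mapsto(f_{j-1}+1,f_j-1)$ preserves $\sum_i f_i$ except the degenerate one at $j=1$, and decreases $\sum_i if_i$ by exactly $1$, so the changes are $-[f\in\BP]$ and $-|\Ir(f)|=-\two{f}$ respectively.

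Part (3), however, has genuine gaps. First, your enumeration of how a spread can transform is incomplete: an \emph{interior} decremented entry equal to $1$ can vanish and split the spread into several nontrivial pieces, not just ``a trivial spread plus a shorter one.'' For $f=(0,0,2,1,1,1,2)$ the spread $[3,7]$ becomes the two spreads $[2,4]$ and $[6,7]$ of $\pt f=(0,1,1,2,0,2,1)$; with more interior $1$'s you get arbitrarily many pieces. Second, you never address \emph{merging}: since an odd-size spread $[i,j]$ acquires the new support point $i-1$, its transform can coalesce with the transform of the preceding spread (e.g.\ $f=(0,2,0,1)$ has spreads $\{2\},\{4\}$ but $\pt f=(1,1,1)$ has the single spread $[1,3]$), so ``the contribution of each spread is preserved'' is not even well-posed without checking that $\lceil\cdot/2\rceil$ is additive across every merge that actually occurs. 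Third, your characterization for the spread containing $1$ is false as stated: for $f=(2,1,1,1,2)\in\BP$ both endpoint values exceed $1$, yet $\pt f=(1,2,0,2,1)\in\AP$ and $\two{\pt f}=2=\two{f}-1$; the interior entry $f_3=1$ splits the spread, so ``$f_1>1$ and $f_j>1$'' does not imply the spread is unchanged or that $\pt f\in\BP$. The approach could in principle be pushed through, but only with a substantially heavier case analysis than you describe.

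The paper avoids all of this bookkeeping with one observation: if $f\in\AP$, then each demotion sends the backward pair at position $j\in\Ir(f)$ to a forward pair of $\pt f$ at position $j-1$, so $\two{\pt f}=|\Il(\pt f)|=|\Ir(f)|=\two{f}$ with no reference to spreads at all. The case $f\in\BP$ is then handled by writing $g=(f_2,f_3,\ldots)\in\AP$, noting $\pt f=(f_1-1,\pt g)$, $\two{f}=1+\two{g}$, and $\two{\pt f}=\two{\pt g}$ or $1+\two{\pt g}$ according as $\pt f\in\AP$ or $\pt f\in\BP$. You may want to adopt that route, or at least use the identity $\two{\cdot}=|\Ir(\cdot)|$ (which you already invoke in part (2)) rather than tracking spread geometry directly.
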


\begin{proof}
The sum $\len{f}=\sum_i f_i$ is invariant under the demotion $(f_{j-1},f_j) \rightsquigarrow (f_{j-1}+1,f_j-1)$ except when $j=1$, in which case it is decreased by 1, whereas $|f|=\sum_i if_i$ is decreased by 1 by every such demotion. Claims (1) and (2) follow.

For (3), first observe that if $f \in \AP$ then $\pt$ maps the backward pairs of $f$  to the forward pairs of $\pt f$, i.e., $i \in \Ir(f) \iff i-1 \in \Il(\pt f)$. Hence $\two{\pt f}=|\Il(\pt f)|=|\Ir(f)|=\two{f}$ in this case.

Now suppose $f \in \BP$ and let $g = (f_2,f_3,\ldots)$. Then $g \in \AP$ and   $\pt f = (f_1-1, \pt g)$.   Interpreting $\two{\cdot}$ as a count of backward pairs gives
$$
\two{f}=1+\two{g} \qquad\text{and}\qquad
	\two{\pt f}  
	= \begin{cases}
		\two{\pt g} &\text{if $\pt f \in \AP$} \\
		1+\two{\pt g} &\text{if $\pt f \in \BP$}.
	\end{cases}
$$
But we already established that $g \in \AP$ implies $\two{g}=\two{\pt g}$, so the above identities prove (3) in the case $f \in \BP$. 
\end{proof}

Let $\w=\w_1\w_2\cdots\w_n \in \{\a,\b\}^*$. The \emph{descent set} of $\w$, denoted $\Des{\w}$, is the set of all indices $i$ for which $\w_i\w_{i+1}=\b\a$. That is, $\Des{\w}$ records the positions of descents in $\w$ relative to the ordering  $\b > \a$.   The \emph{descent number} and \emph{major index} of $\w$ are defined by $\des{\w}:=|\Des{\w}|$ and $\maj{\w}:=\sum_{i \in \Des{\w}} i$, respectively.

\begin{prop}
\label{prop:stats}
Let  $f \in \Fset$ and let $\w = \burge{f}$.  Then: 
\begin{enumerate}
\item	$\len{f} = $ \#\,occurrences of $\b$ in $\w$
\item	$|f|=\maj{\w}$
\item	$\two{f} = \des{\w} =$ \#\,occurrences of $\b\a$ in $\w$
\end{enumerate}
\end{prop}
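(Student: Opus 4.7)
The natural approach is induction on the size $|f|$ (equivalently, on the length $n$ of the Burge code). The inductive step rests on the simple observation that if $\burge{f} = \w_1 \w_2 \cdots \w_n$, then $\burge{\pt f} = \w_2 \cdots \w_n$: the Burge chain of $\pt f$ is just the Burge chain of $f$ with its first term $\pti{f}{0}=f$ removed, so the corresponding code is obtained by stripping off $\w_1$. In particular, $\w_1 = \b \iff f \in \BP$ and $\w_2 = \a \iff \pt f \in \AP$.

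The base case $f = \emptyf$ gives $\w = \a$, for which all three statistics on both sides vanish. For the inductive step, let $\w' = \w_2\cdots\w_n = \burge{\pt f}$. For part (1), the number of $\b$'s in $\w$ exceeds that in $\w'$ by $[\w_1 = \b] = [f \in \BP]$, and Lemma~\ref{lem:stats}(1) says the same of $\len{f}$ versus $\len{\pt f}$, so induction closes the case. Part (3) is analogous: the number of $\b\a$ substrings in $\w$ exceeds that in $\w'$ by $[\w_1\w_2 = \b\a] = [f \in \BP \text{ and } \pt f \in \AP]$, and Lemma~\ref{lem:stats}(3) records exactly this discrepancy between $\two{f}$ and $\two{\pt f}$.

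Part (2) requires slightly more care, because prepending a letter shifts all existing descent positions by one. If $\w'$ has descents at positions $j_1 < \cdots < j_r$, then those become descents of $\w$ at positions $j_1+1,\ldots,j_r+1$, plus possibly a new descent at position $1$ if $\w_1\w_2 = \b\a$. Hence
\[
\maj{\w} \;=\; \maj{\w'} + \des{\w'} + [\w_1\w_2 = \b\a].
\]
By the inductive hypothesis, $\maj{\w'} = |\pt f|$ and $\des{\w'} = \two{\pt f}$. Combining this with Lemma~\ref{lem:stats}(2), which gives $|\pt f| = |f| - \two{f}$, and the identity $\two{\pt f} + [\w_1\w_2=\b\a] = \two{f}$ just established in the proof of part (3), all corrections cancel and one obtains $\maj{\w} = |f|$.

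The argument is essentially a bookkeeping exercise, and the main (mild) obstacle is getting the descent-shift identity for part (2) right: one has to combine the inductive values of both $\maj{\w'}$ and $\des{\w'}$ with both parts (2) and (3) of Lemma~\ref{lem:stats} simultaneously. Once this is set up, the cancellation is automatic. I expect to prove the three parts in the order (1), (3), (2), since (2) consumes the identity from (3).
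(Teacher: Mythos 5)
Your proof is correct and takes essentially the same approach as the paper: both rest on Lemma~\ref{lem:stats} together with the observation that $\burge{\pt f}$ is $\burge{f}$ with its first letter stripped off. The paper handles part (2) by iterating the lemma down the whole Burge chain and evaluating $\sum_{r}\two{\pti{f}{r}}$ as $\maj{\w}$ via the step-function behaviour of the 2-measure, which is just the unrolled form of your one-step recursion $\maj{\w}=\maj{\w'}+\des{\w'}+[\w_1\w_2=\b\a]$.
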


\begin{exmp}
In Example~\ref{exmp:burge} we saw that $f=(1,2,1,0,1)$ has Burge code $\w =\b\a\b\a\a\a\b\b\b\a$. Note that $\w$ contains $5=\len{f}$ copies of $\b$, and we have $\Des{\w}=\{1,3,9\}$,   $\maj{\w}=13=|f|$ and  $\des{\w}=3=\two{f}$. 
\end{exmp}

\begin{proof}[Proof of Proposition~\ref{prop:stats}:]
Claims (1) and (3) are immediate from the corresponding parts of Lemma~\ref{lem:stats}. To prove (2), let $\w=\w_1\w_2\cdots\w_{k+1}$ and $\Des{\w} = \{i_1, i_2, \ldots, i_m \}$, where  $1 \leq i_1 < i_2 < \cdots < i_m=k$.  Iterating part 2 of the lemma gives $|f|=\sum_{r=0}^{k} \two{\pti{f}{r}}$, while iterating part 3 gives $\two{\pti{f}{r}}=m-j$ for $r$ in the range $i_j \leq r < i_{j+1}$, where we take $i_0=0$ and $i_{m+1}=\infty$. Hence $|f| = \sum_{j=0}^{m-1} (m-j)(i_{j+1}-i_j) = \sum_{j=0}^{m-1} i_{j+1} = \maj{\w}$.
\end{proof}

\subsection{The Descent Map}
\label{sec:universal}

We have described a three-way equivalence between partitions, frequency vectors, and binary words, namely
$$
	\Pset \xrightarrow{\text{frequency $f$}}
	\Fset \xrightarrow{\text{Burge code $\Omega$}}
	\Wset.
$$
With this in mind we adopt the following convention.

\begin{convention}
All notation defined previously for elements of $\Fset$ and $\Wset$  is extended to $\Pset$ through $f(\cdot)$ and $\burge{\cdot}$. In particular, we define $\pt P := \pt(f(P))$, $\pti{P}{i}:=\pti{f(P)}{i}$, $\burge{P}:=\burge{f(P)}$ and then $\Des{P}:=\Des{\burge{P}}$. 
\end{convention}

Suppose  $\Des{P}=\{i_1,i_2,\ldots,i_m\}$ where $i_1 < i_2 < \cdots < i_m$.  Then evidently $i_j-i_{j-1} \geq 2$ for all $j$, and   Proposition~\ref{prop:stats}(2) gives $\sum_j i_j = \maj{\burge{P}}=|P|$.  Thus $\Des{P}$ can be regarded as a super-distinct partition of size $|P|$.  Moreover, if $\burge{P}=\w_1\w_2\cdots\w_n$ then $\burge{\pt P} = w_2\cdots\w_n$, so we have $\Des{\pt P} = \{i_1-1,i_2-1,\ldots,i_m-1\} \setminus \{0\}$.  See, for example, Figure~\ref{fig:ptexample}.

\begin{figure}
$$
\begin{array}{l|l|l|l|l}
i & 
\pti{f}{i} &
\burge{\pti{f}{i}} &
\pti{P}{i} & 
\Des{\pti{P}{i}}
\\
\hline
0 &  (1,1,0,1,0,0,1) & \a\b\a\b\b\a\b\a & [7,4,2,1] & [7,5,2] \\
1 &  (2,0,1,0,0,1) & \b\a\b\b\a\b\a & [6,3,1^2] &[6,4,1] \\
2 &  (1,1,0,0,1) & \a\b\b\a\b\a & [5,2,1] & [5,3] \\
3 &  (2,0,0,1) & \b\b\a\b\a & [4,1^2] & [4,2] \\
4 &  (1,0,1) & \b\a\b\a & [3,1] & [3,1] \\
5 &  (0,1) & \a\b\a & [2] & [2] \\
6 &  (1) & \b\a & [1] & [1] \\
7 &  \emptyf & \a & \emptypartition & \emptypartition 

\end{array}
$$
 \caption{The evolution of $\Des{\pti{P}{i}}$ along a Burge chain.}
 \label{fig:ptexample}
 \end{figure}

\begin{defn}
For a partition $P$, let $P-1$ denote be the \emph{reduced partition} obtained by subtracting 1 from each part of $P$ and eliminating any resulting zeros. 
\end{defn}
 
We have just seen that the \emph{descent map}
 $P \mapsto \Des{P}$ is a size-preserving function from $\Pset$ to $\RRset$ satisfying $\Des{\pt P}=\Des{P}-1$.  It is easy to see  that this is the unique such function.

\newcommand{\genop}{\mathfrak{Q}}

\begin{lem}
\label{lem:uniqueness} Let $\Pset'\subseteq\Pset$ be a nonempty set of partitions such that for every $P\in\Pset'$, $\pt P\in \Pset'$. 
Suppose $\map{\genop}{\Pset'}{\Pset}$ satisfies
\begin{enumerate}
\item $|\genop(P)|=|P|$ and 
\item $\genop(\pt P) = \genop(P)-1$. 
\end{enumerate}
Then $\genop(P) = \Des{P}$ for all $P \in \Pset'$.   
\end{lem}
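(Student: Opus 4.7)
The plan is to prove the lemma by strong induction on $|P|$. Before starting the induction I need to know that $\emptyf \in \Pset'$: since $\Pset'$ is nonempty and closed under $\pt$, iterating $\pt$ from any element of $\Pset'$ strictly decreases size (Lemma~\ref{lem:stats}(2) gives $|\pt P| = |P| - \two{P}$, and $\two{P} > 0$ unless $P = \emptyf$), so the Burge chain terminates in $\emptyf \in \Pset'$. The base case $P = \emptyf$ is then immediate: condition (1) forces $|\genop(\emptyf)| = 0$, so $\genop(\emptyf) = \emptyf = \Des{\emptyf}$.

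For the inductive step I fix $P \in \Pset' \setminus \{\emptyf\}$ and assume $\genop(Q) = \Des{Q}$ for every $Q \in \Pset'$ with $|Q| < |P|$. Because $\pt P \in \Pset'$ and $|\pt P| < |P|$, the hypothesis yields $\genop(\pt P) = \Des{\pt P}$, and combining this with condition (2) gives $\genop(P) - 1 = \Des{\pt P}$. The discussion preceding the lemma (using that $\burge{\pt P}$ is the tail $\w_2 \cdots \w_n$ of $\burge{P} = \w_1 \cdots \w_n$) also gives $\Des{P} - 1 = \Des{\pt P}$. Finally, condition (1) together with Proposition~\ref{prop:stats}(2) (which identifies $|\Des{P}| = \maj{\burge{P}} = |P|$) ensures that $\genop(P)$ and $\Des{P}$ are two partitions sharing both their size $|P|$ and their reduction $\Des{\pt P}$ under subtraction of~$1$.

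The only non-routine step is the reconstruction observation that a partition $A \in \Pset$ is uniquely determined by the pair $(A - 1, |A|)$. I expect this to be the conceptual obstacle, since at first glance the two conditions on $\genop$ do not obviously pin down its value. The verification is short: if $A$ has $k$ parts and exactly $k - j$ of them equal $1$, then removing the trailing $1$s and subtracting $1$ from the remaining parts yields $A - 1$ of size $|A| - k$, so $\len{A} = |A| - |A - 1|$ is determined by the pair; the partition $A$ is then obtained from $A - 1$ by incrementing every part and appending $\len{A} - \len{A-1}$ trailing $1$s. Applied to the two partitions $\genop(P)$ and $\Des{P}$, this forces $\genop(P) = \Des{P}$ and closes the induction.
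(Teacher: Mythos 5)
Your proof is correct and follows essentially the same route as the paper's: induction on $|P|$, with the key observation that a partition is uniquely reconstructible from its reduction $A-1$ together with its size (the paper phrases this via frequency vectors, writing the frequencies of $\genop(P)$ as $(|P|-|\pt P|-\sum_i f_i, f_1, f_2,\ldots)$, whereas you phrase it via parts). Your explicit check that $\emptyf\in\Pset'$, needed for the base case, is a small point the paper leaves implicit.
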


\begin{proof}
Condition (1) forces $\genop(\emptypartition)=\emptypartition$, and if $\genop(\pt P)$ has frequencies $(f_1,f_2,\ldots)$ then  (1) and (2) force $\genop(P)$ to have frequencies $(|P|-|\pt P|-\sum_i f_i, f_1, f_2, \ldots)$.   So $\genop(P)$ is determined uniquely by induction on $|P|$.
\end{proof}

 Theorem~\ref{thm:main} is equivalent to statement that $\D(P) = \Des{P}$ for all $P \in \Pset$.  In Section~\ref{sec:main} we prove this identity by showing that $\D$ satisfies the hypothesis of Lemma~\ref{lem:uniqueness}.
 
We now present a multi-faceted characterization of super-distinct partitions.  Observe that the equivalence $P \in \RRset \Leftrightarrow \Des{P}=P$ together with Theorem~\ref{thm:main}  imply that $P \in \RRset$ if and only if $\D(P) = P$.

\begin{prop}
\label{prop:characterization}
Let $P \in \Pset$ and $f=f(P)$. The following are equivalent:
\begin{enumerate}
\item	$P \in \RRset$
\item	$\len{P}=\two{P}$
\item	$\burge{P}$ does not contain the substring $\b\b$
\item	$f_i=1$ if $i \in \Ir(f)$ and $f_i=0$ otherwise
\item	$\pt f = (f_2,f_3,\ldots)$
\item	$\pt P = P-1$
\item	$\Des{P}=P$
\end{enumerate}

\end{prop}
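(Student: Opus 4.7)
The plan is to prove the seven-way equivalence via three clusters: $(1)\iff(2)\iff(3)$; $(1)\iff(4)\Rightarrow(5)\iff(6)$ together with $(5)\Rightarrow(2)$; and $(6)\Rightarrow(7)\Rightarrow(1)$. For $(1)\iff(2)$ I would invoke the observation in \S\ref{sec:prelim} that $\two{P}\leq\len{P}$ always, with equality exactly when $P\in\RRset$. For $(2)\iff(3)$, Proposition~\ref{prop:stats} identifies $\len{P}$ with the number of $\b$'s in $\burge{P}$ and $\two{P}$ with the number of occurrences of $\b\a$; since $\burge{P}$ terminates with $\a$, every $\b$ is followed by either $\a$ or $\b$, so the two counts agree precisely when $\b\b$ never appears as a substring.

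For the second cluster, $(1)\iff(4)$ would follow by unwinding definitions: $P\in\RRset$ is equivalent to $f$ taking values in $\{0,1\}$ with no two consecutive nonzero entries, equivalently all spreads of $f$ being singletons with $f\equiv 1$ on $\supp(f)$; in that case $\Ir(f)=\supp(f)$ and (4) holds, and the converse is immediate since $\Ir(f)$ never contains consecutive integers. For $(4)\Rightarrow(5)$, condition~(4) makes each backward pair equal to $(0,1)$, demoted by $\pt$ to $(1,0)$; a short case split on whether $k$ and $k+1$ lie in $\Ir(f)$---with super-distinctness ruling out the simultaneous case---yields $(\pt f)_k=f_{k+1}$ for all $k\geq 1$. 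For $(5)\Rightarrow(2)$, Lemma~\ref{lem:stats}(2) gives $|\pt f|=|f|-\two{f}$, while a direct calculation yields $|(f_2,f_3,\ldots)|=|f|-\len{f}$; equating these via~(5) forces $\two{P}=\len{P}$. Finally, $(5)\iff(6)$ is the translation that the frequency vector of the reduced partition $P-1$ is exactly $(f_2,f_3,\ldots)$.

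For the last cluster, I would derive $(6)\Rightarrow(7)$ from Lemma~\ref{lem:uniqueness} applied to the Burge chain $\Pset'=\{\pti{P}{0},\pti{P}{1},\ldots,\emptypartition\}$ with $\genop$ the identity. The crucial point is that~(6) propagates along this chain: the equivalence $(1)\iff(6)$ just established reduces~(6) to super-distinctness, and subtracting $1$ from each part of a super-distinct partition manifestly preserves super-distinctness, so every $\pti{P}{i}$ again satisfies~(6). The lemma then identifies $\genop$ with $\Des{\cdot}$, giving $\Des{P}=P$. The implication $(7)\Rightarrow(1)$ is immediate, since $\Des{P}$ always lies in $\RRset$. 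I expect the main obstacle to be the bookkeeping in $(4)\Rightarrow(5)$: one must track $\Ir(f)$ carefully and invoke super-distinctness at precisely the step that excludes $k,k+1\in\Ir(f)$ occurring simultaneously, which is what prevents a collision in the shift identity $(\pt f)_k=f_{k+1}$.
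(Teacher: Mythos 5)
Your proof is correct, and most of its steps coincide with the paper's: $(1)\Leftrightarrow(2)$, $(1)\Leftrightarrow(4)$ and $(5)\Leftrightarrow(6)$ from the definitions, $(2)\Leftrightarrow(3)$ from Proposition~\ref{prop:stats}, $(4)\Rightarrow(5)$ by inspecting the backward pairs, $(7)\Rightarrow(1)$ from $\Des{P}\in\RRset$, and the passage to $(7)$ via Lemma~\ref{lem:uniqueness} with $\genop$ the identity (the paper takes $\Pset'=\RRset$ and notes its $\pt$-invariance, whereas you take $\Pset'$ to be the Burge chain of $P$ and propagate super-distinctness along it; these are interchangeable). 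The one genuine divergence is how you close the loop from $(5)$: the paper proves $(5)\Rightarrow(4)$ directly by a downward induction on the backward pairs $(f_{i_j-1},f_{i_j})$, forcing each to equal $(0,1)$, while you instead prove $(5)\Rightarrow(2)$ by comparing $|\pt f|=|f|-\two{f}$ from Lemma~\ref{lem:stats}(2) with the direct computation $|(f_2,f_3,\ldots)|=|f|-\len{f}$. Your route is shorter and avoids the induction entirely, at the cost of leaning on Lemma~\ref{lem:stats}(2) and of not recovering the structural statement $(4)$ from $(5)$ directly (you obtain it only by going around through $(2)\Leftrightarrow(1)\Leftrightarrow(4)$, which is perfectly adequate for the equivalence). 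Both arguments are sound; yours is arguably the cleaner way to reinsert $(5)$ into the cycle.
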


\begin{proof}
Equivalences $(1)\Leftrightarrow(2)$, $(1) \Leftrightarrow (4)$ and $(5)\Leftrightarrow(6)$ are clear from the respective definitions, while $(2)\Leftrightarrow(3)$ is immediate from the fact that $\len{P}$ and $\two{P}$ count occurrences of $\b$ and $\b\a$ in $\burge{P}$. (See Proposition~\ref{prop:stats}.)

If $f$ satisfies (4) then by definition of $\pt$  we have $(\pt f)_i = 1$ if $i+1  \in \Ir(f)$ and $(\pt f)_i=0$ otherwise. Thus $(4) \Rightarrow (5)$. Conversely, suppose $\pt f = (f_2,f_3,\ldots)$. Say   $\Ir(f)=\{i_1,\ldots,i_k\}$ with $i_1 > \cdots > i_k$.
Then $(f_{i_j-1}+1,f_{i_j}-1)=(f_{i_j}, f_{i_j+1})$ for all $j$. Since $f_{i_1+1}=0$ we have $(f_{i_1-1},f_{i_1})=(0,1)$, which in turn implies $f_{i_2+1}=0$ and therefore $(f_{i_2-1},f_{i_2})=(0,1)$, etc.  Thus $(5) \Rightarrow (4)$.

We know $\Des{P} \in \RRset$ for all $P$, so clearly $(7)\Rightarrow(1)$. The remaining implication, $(1) \Rightarrow (7)$, follows by applying Lemma \ref{lem:uniqueness} to 
$\map{\genop}{\RRset}{\Pset}$ given by
$\genop(T)=T$.
Note that $\RRset$ is $\pt$-invariant and that this function satisfies the conditions of the lemma due to the fact that, as proven above, (1) implies (6).  
\end{proof}

\subsection{Additional Remarks}  
\label{sec:burgeadditional}
Burge's work \cite{Bur-1,Bur-2} provides additional bijections between partitions and $\{a,\b\}$-words. One of these~\cite[\S2]{Bur-1} interprets $\a$ and $\b$ as operators on $\Pset$ that enlarge the diagonal hooks of a partition (viewed as a Young diagram or Ferrer's graph) along the horizontal and vertical axes, respectively. Together with Corollary~\ref{cor:boxconjecture}, it is easy to check that this gives a correspondence between $\D^{-1}(Q)$ and the set of partitions with hook lengths $q_1,\ldots,q_k$ (and hence Durfee square of size $k=\len{Q}$). Such a mapping was sought by the proposers of the Box Conjecture, who inferred its existence enumeratively; see~\cite[Rmk. 4.12]{IKvSZ}.
Curiously, this correspondence also answers a question raised recently by Andrews et al \cite{ABD,ACZ}, which asks for a size- and length-preserving bijection on $\Pset$ that maps 2-measure to size of Durfee square.

It is worth noting that the aforementioned correspondence can also be defined in terms of another well-known combinatorial mapping on words.
   Let $M$ be a finite multiset with elements from an ordered alphabet $S$, and let $\mathfrak{S}(M)$ be the set of all permutations of $M$ --- equivalently, words on $S$ with content $M$.   Foata's \emph{second fundamental transformation} is a recursively-defined bijection $\map{\phi}{\mathfrak{S}(M)}{\mathfrak{S}(M)}$ that maps  major index to inversion number; i.e., $\maj{w}=\mathrm{inv}(\phi(w))$.  When the alphabet $S$ is of size 2, e.g. $S=\{\a,\b\}$ with $\b > \a$,  Foata's transformation has a simple non-recursive description; see~\cite[Prop. 2.2]{Sag-Sav}.  Let $Q =(q_1,\ldots,q_k) \in \RRset$ and suppose $P \in \D^{-1}(Q)$ has Burge code $w$ given by~\eqref{eq:code}. Then one finds
    $$
    \phi(w) = \b \a^{\delta_k-i_k} \b \a^{\delta_{k-1}-i_{k-1}}
    \cdots \b \a^{\delta_2-i_2} \b \a^{\delta_1-i_1} 
    \b^{i_1-1}\a \b^{i_2-1}\a \cdots \b^{i_k-1}\a.
    $$
    This word describes a lattice path by reading from right-to-left and interpreting $\a$ and $\b$ as  east and north steps, respectively. This path, in turn, determines a partition $P'$ by tracing the boundary of its Young diagram. By construction we have $|P'|=\mathrm{inv}(\phi(w))=\maj{w}=|P|$ and $\len{P'}=\len{P}=\sum_{j=1}^k i_j$, and it is easy to check that $P'$ has diagonal hooks of lengths $q_1,\ldots,q_k$.  Indeed $P \mapsto P'$ is precisely the bijection described in the previous paragraph.  

Burge's bijections \cite{Bur-1,Bur-2} are in fact more refined than the versions presented here, as they track certain  additional partition  statistics that were not relevant for our purposes. However, some of these statistics may have interesting algebraic manifestations and  be of interest in the future when working on the question of nilpotent orbits corresponding to commuting matrices. In the more general setting it is fruitful to view the associated binary words as lattice paths, as this affords geometric interpretations of the additional parameters  \cite{AB, Bre, JaMa}.

\section{The Burge Correspondence and Invariant Subspaces}
\label{sec:main}

In this section we first define the maximal commutative subalgebra $\U_B$ of the \emph{nilpotent commutator} $\Nil_B=\{A\in M_n(F);\ AB=BA,\ A\textrm{ nilpotent}\}$. The image $W$ of a generic element of $\U_B$ is an invariant subspace for $B$. It is of dimension $n-\mu_2(P)$ \cite[Prop. 2.4]{Bas-2}. Our main result of the section is the fact that if $B$ is of Jordan type $P \in \Pset$, then the Jordan type of the restriction $B|_W$ is $\pt P$. Theorem~\ref{thm:main} is an immediately corollary due to the universality of the descent map (Lemma~\ref{lem:uniqueness}).

\subsection{The Nilpotent Commutator and its Maximal Nilpotent Subalgebra}
\label{sec:nilpotentcomm}

Assume that $P=P(B)$ is the Jordan type of $B$ and that $B\in M_n(F)$ is in the (upper-triangular) Jordan canonical form. Let $f(P)$ be the frequency sequence of $P$. So $P=[1^{f_1},2^{f_2},\ldots,z^{f_z}]$, where $z$ is the size of the largest Jordan block in $B$. Thus $f_z\neq 0$. Recall that $\supp(f)=\{i\in\mathbb{N};\, f_i\neq 0\}$ is the support of $f$. We then write also $P=[i^{f_i}]_{i\in\supp(f)}$. The Jordan canonical form induces a cyclic decomposition of the underlying vector space 
$$V=\bigoplus_{i\in\supp(f)}\bigoplus_{k=1}^{f_i} V_{ik}$$ 
into cyclic subspaces $V_{ik}$ for $B$. We denote by $v_{ik1}$ the cyclic generator of $V_{ik}$. Then $v_{ikh}=B^{h-1}v_{ik1}$, $h=1,2,\ldots,i$, is a Jordan chain for $B$ spanning $V_{ik}$ and the union of all these Jordan chains
$$\Ba_B=\bigcup_{i\in\supp(f)}\bigcup_{k=1}^{f_i}\{v_{iki},v_{ik,i-1}\ldots,v_{ik1}\}$$ 
is a Jordan basis for $B$. Note that $Bv_{iki}=0$ for all $i$ and $k$.

The following lemma gives a possible modification of Jordan basis $\Ba_B$. 

\begin{lem}\label{new_Jordan_basis}
    Choose $s\in\supp(f)$ and $t\in\{1,2,\ldots,f_s\}$. Let $1\le r\le s-1$. Then for any scalars $\alpha_{ikj}$, $i\in\supp(f)$, $i\le s-r$, $k\in\{1,2,\ldots,f_i\}$, $j\in\{1,2,\ldots,i\}$, vectors 
    $$v'_{sth}=B^{h-1}\left(v_{str}+\sum_{i\in\supp(f),\,i\le s-r}\sum_{k=1}^{f_i}\sum_{j=1}^i\alpha_{ikj} v_{ikj}\right),\ h=1,2,\ldots,s-r+1,$$ 
    form a Jordan chain for $B$. For $r=1$, we obtain another Jordan basis for $B$ on $V$ by replacing $\{v_{st1},v_{st2},\ldots, v_{sts}\}$ with $\{v'_{st1},v'_{st2},\ldots, v'_{sts}\}$ in $\Ba_B$.
\end{lem}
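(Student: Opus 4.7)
The plan is to verify the two ingredients of a Jordan chain (the shift property and annihilation at the end) and then use a simple change-of-basis argument for the $r=1$ statement. The shift property $Bv'_{sth}=v'_{st,h+1}$ for $1\le h\le s-r$ is immediate from the definition, since $v'_{st,h+1}=B^{h}(\cdot)=B\cdot B^{h-1}(\cdot)=Bv'_{sth}$. The content of the lemma therefore lies in showing that $Bv'_{st,s-r+1}=0$ and that the vectors are linearly independent.

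To establish $Bv'_{st,s-r+1}=B^{s-r+1}(v_{str}+\sum \alpha_{ikj}v_{ikj})=0$, I would compute on each summand separately using the standard identity $B^m v_{ikj}=v_{ik,j+m}$ if $j+m\le i$ and $0$ otherwise. For the distinguished term, $B^{s-r+1}v_{str}=B^{s-r+1}B^{r-1}v_{st1}=B^s v_{st1}=0$, since the cyclic subspace $V_{st}$ has dimension $s$. For each of the correction vectors $v_{ikj}$ with $i\in\supp(f)$, $i\le s-r$, and $1\le j\le i$, the index $j+s-r+1$ satisfies $j+s-r+1\ge 1+s-r+1 = s-r+2 > i$, so $B^{s-r+1}v_{ikj}=0$. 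This indexing check is the one substantive calculation, and the one place where the hypothesis $i\le s-r$ is used.

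Linear independence of $v'_{st1},\ldots,v'_{st,s-r+1}$ follows from expanding each one in the Jordan basis $\Ba_B$: the computation above gives
\[
v'_{sth}=v_{st,r+h-1}+\sum_{i\le s-r}\sum_{k=1}^{f_i}\sum_{j:\,j+h-1\le i}\alpha_{ikj}\,v_{ik,j+h-1},
\]
so each $v'_{sth}$ has a leading coefficient $1$ on a distinct Jordan basis vector $v_{st,r+h-1}$ (for $h=1,\ldots,s-r+1$), while all correction terms belong to chains with $i\le s-r<s$, which are disjoint from $\{v_{st,\cdot}\}$. Hence the vectors are independent and form a Jordan chain of length $s-r+1$.

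For the second assertion, specializing to $r=1$ gives $v'_{sth}=v_{sth}+w_h$ where each $w_h\in\operatorname{span}\{v_{ikj}:i\le s-1\}$. The latter span is complementary in $\operatorname{span}(\Ba_B)=V$ to $\operatorname{span}\{v_{st1},\ldots,v_{sts}\}$, so the transition from $\Ba_B$ to the replacement set is given by a block upper-unitriangular matrix (with $\Ba_B$ ordered by putting the $v_{st,\cdot}$ last) and is therefore invertible. The new collection is consequently a basis, and by the first part it is a Jordan basis for $B$. The main obstacle is really just the bookkeeping in the index inequality $j+s-r+1>i$; everything else is formal.
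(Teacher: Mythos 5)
Your proof is correct and fills in exactly the computation the paper leaves implicit (its proof is the one-line instruction to use that $\Ba_B$ is a Jordan basis together with the restrictions on $r$ and $i$); the index check $j+s-r+1\ge s-r+2>i$ is indeed the crux, and the rest is the formal shift/annihilation/independence verification. One small imprecision: for $r=1$ the span of $\{v_{ikj}\,:\,i\le s-1\}$ need not be a full complement of $\operatorname{span}\{v_{st1},\ldots,v_{sts}\}$ in $V$ (there may be other chains of length $s$ or longer), but your change-of-basis argument only requires that the corrections lie in the span of $\Ba_B\setminus\{v_{st1},\ldots,v_{sts}\}$, which they do, so the unitriangularity of the transition matrix and the conclusion stand.
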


\begin{proof}
    Use the fact that $\Ba_B$ is a Jordan basis for $B$ and the restrictions imposed on $r$ and $i$.
\end{proof}

It is well-known (see e.g. \cite[\S 1.3]{SuTy}) that any matrix $A$ commuting with $B$ (i.e., a matrix $A$ in the \emph{commutator} $\Cm_B$ of $B$) has two-step block structure: First with respect to the sizes of Jordan blocks into $A=\left[A_{ij}\right]_{i,j\in\supp(f)}$, and then with respect to the frequency of each size to $A_{ij}=\left[A^{kl}_{ij}\right]_{k,l=1}^{f_i,f_j}$. Each block $A_{ij}^{kl}$ is an upper-triangular Toeplitz matrix either of the form
\begin{equation}\label{Toeplitz-1} 
\left[\begin{array}{ccccccc}
0     &\cdots&0     &a_1^{kl}&a_2^{kl}&\cdots&a_i^{kl}\\
\vdots&      &      &0       &a_1^{kl}&\ddots&\vdots  \\
\vdots&      &      &      &\ddots    &\ddots&a_2^{kl}\\
0     &\cdots&\cdots&\cdots&\cdots    &0     &a_1^{kl}
\end{array}\right]
\end{equation}
if $i< j$, or of the form
\begin{equation}\label{Toeplitz-2}
    \left[\begin{array}{cccc}
a_1^{kl}&a_2^{kl}&\cdots&a_j^{kl}\\
0       &a_1^{kl}&\cdots&\vdots  \\
\vdots  &\ddots  &\ddots&a_2^{kl}\\
\vdots  &        &0     &a_1^{kl}\\
\vdots  &        &      &0       \\
0     &\cdots&\cdots&0     
\end{array}\right]
\end{equation}
if $i>j$. When $i=j$ the initial columns of zeros in the first form are omitted (or equivalently the trailing rows of zeros in the second form are omitted). The parameters $a_h^{kl}$ in different blocks are all distinct. We omitted the indices $i$ and $j$ to simplify notation. 

For each $i\in\supp(f)$, observe that all the blocks in $A_{ii}$ are $i\times i$ square blocks. We choose the diagonal element of each of the blocks in $A_{ii}$ and we form new matrix $A_{i}^D=\left[a_1^{kl}\right]_{k,l=1}^{f_i}$.  Next, we point out that for each $A$ the blocks $A_i^D$, $i\in\supp(f)$, form the semisimple part of $A$ considered as an element of algebra $\Cm_B$ \cite[Lem. 2.2]{BIK}. Basili \cite[Lem. 2.3]{Bas-2} showed that an element $A\in\Cm_B$ is nilpotent (i.e., it belongs to the nilpotent commutator $\Nil_B$) if and only if its blocks $A_i^D$ are nilpotent for all $i\in\supp(f)$ and that for each $A\in\Nil_B$ it is possible to find a Jordan basis $\Ba_B$ for $B$ such that the corresponding matrices $A_i^D$ are all lower-triangular. Thus each matrix in the nilpotent commutator $\Nil_B$ is similar to a matrix in a maximal nilpotent subalgebra $\Uc_B$ of $\Nil_B$ defined by
$$\Uc_B=\{A\in\Nil_B;\ \textrm{blocks }A_i^D\ \textrm{are strictly lower-triangular for all } i\in\supp(f)\}.$$
Let us now consider an example.

\begin{exmp}
    Assume that $B$ has Jordan form determined by partition $P=[4^2,3,2^2]$, so that $f(P)=(0,2,1,2)$. A matrix $A$ in $\Uc_B$ is of the form
    $$\left[\begin{array}{ccccccccccccccc}
    0&a_{11}&a_{12}&a_{13}&0&a_{21}&a_{22}&a_{23}&b_1&b_2&b_3&c_{11}&c_{12}&c_{21}&c_{22}\\
    0&0     &a_{11}&a_{12}&0&0     &a_{21}&a_{22}&0  &b_1&b_3&0  &c_{11}&0&c_{21}\\
    0&0&0&a_{11}&0&0&0&a_{21}&0&0&b_1&0&0&0&0\\
    0&0&0&0&0&0&0&0&0&0&0&0&0&0&0\\
    a_{30}&a_{31}&a_{12}&a_{33}&0&a_{41}&a_{42}&a_{43}&d_1&d_2&d_3&e_{11}&e_{12}&e_{21}&e_{22}\\
    0     &a_{30}&a_{31}&a_{32}&0&0     &a_{41}&a_{42}&0  &d_1&d_2&0  &e_{11}&0&e_{21}\\
    0     &   0  &a_{30}&a_{31}&0&0     &0     &a_{41}&0  &0  &d_1&0  &0 &0&0 \\
    0     &0     &0     &a_{30}&0&0&0&0&0&0&0&0&0&0&0\\
    0     &f_{1} &f_{2} &f_{3} &0&g_{1}&g_{2}&g_{3}&0&h_1&h_2&i_{11}&i_{12}&i_{21}&i_{22}\\
    0     &0     &f_{1} &f_{2} &0&0    &g_{1}&g_2  &0&0  &h_1&0  &i_{11}&0&i_{21}\\
    0     &0     &0     &f_{1} &0&0    &0    &g_{1}&0&0&0&0&0&0&0\\
    0     &0     &j_{11}&j_{12}&0&0&k_{11}&k_{12}&0&l_{11}&l_{12}&0&m_{11}&0&m_{21}\\
    0     &0     &0     &j_{11}&0&0&0&k_{11}&0&0&l_{11}&0&0&0&0\\
    0     &0     &j_{21}&j_{22}&0&0&k_{21}&k_{22}&0&l_{21}&l_{22}&m_{30}&m_{31}&0&m_{41}\\
    0     &0     &0     &j_{21}&0&0&0&k_{21}&0&0&l_{21}&0&m_{30}&0&0
    \end{array}\right].$$
    Then we have
    $$A^D_4=\left[\begin{array}{cc}
        0 & 0 \\
        a_{30} & 0
    \end{array}\right],\ A^D_3=[0]\ \textrm{and}\  
    A^D_2=\left[\begin{array}{cc}
        0 & 0 \\
        m_{30} & 0
    \end{array}\right].$$
\end{exmp}

\vskip 5pt

\subsection{Invariant Subspaces}

The structure of invariant subspaces of a nilpotent matrix in Jordan form was described by Shayman \cite{Sha-1,Sha-2}.
The following proposition is an easy consequence of Shayman's results. 

\begin{prop}\label{W ia an image}
Each invariant subspace of $B$ is equal to the image of an element of $\Cm_B$. 
\end{prop}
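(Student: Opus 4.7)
\medskip

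\noindent\textbf{Proof plan.} The plan is to translate the assertion into module-theoretic language, invoke Shayman's structural description of invariant subspaces to obtain a compatible set of generators for $W$, and then construct $A\in\Cm_B$ by prescribing its values on selected cyclic Jordan generators.

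\smallskip

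First I would recall that a $B$-invariant subspace of $V$ is the same thing as an $F[B]$-submodule of $V$, and that $\Cm_B$ coincides with the ring $\operatorname{End}_{F[B]}(V)$. Shayman's results \cite{Sha-1,Sha-2} imply that any $B$-invariant subspace $W\subseteq V$ admits a generating system $w_1,\dots,w_r$ with the following compatibility: if we set $h_\ell:=\min\{j\geq 0 : B^j w_\ell=0\}$, then we can select pairwise distinct cyclic Jordan subspaces $V_{i_\ell k_\ell}\in\Ba_B$ (one per $\ell$) with $i_\ell\geq h_\ell$, and $W=\sum_{\ell=1}^r F[B]\,w_\ell$. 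The selection of such a system is the real content of Shayman's theorem and is what ties the Jordan type of $B|_W$ to that of $B$ on $V$.

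\smallskip

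Next I would construct $A\in\Cm_B$ explicitly. Using the Jordan basis $\Ba_B$, define $A$ on each cyclic subspace of the decomposition of $V$ as follows. For each $\ell$ set $A(v_{i_\ell k_\ell,j}):=B^{j-1}w_\ell$ for $j=1,\dots,i_\ell$; for every cyclic subspace $V_{ik}$ not assigned to any $w_\ell$, set $A|_{V_{ik}}\equiv 0$. This is well defined because $i_\ell\geq h_\ell$ forces $B^{i_\ell}w_\ell=0$, so the assignment is compatible with the relation $B\cdot v_{i_\ell k_\ell,i_\ell}=0$. Commutation $AB=BA$ is then automatic on each cyclic summand, and therefore on all of $V$, so $A\in\Cm_B$. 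Moreover,
\[
\operatorname{Im}(A)=\sum_{\ell=1}^r\operatorname{span}\{B^{j-1}w_\ell:1\leq j\leq i_\ell\}=\sum_{\ell=1}^r F[B]\,w_\ell=W,
\]
where the middle equality uses $B^j w_\ell=0$ for $j\geq h_\ell$ to absorb higher-power terms.

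\smallskip

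The main obstacle is not the construction of $A$ itself (which is essentially forced once the data $(w_\ell, V_{i_\ell k_\ell})$ are in hand) but rather the invocation of Shayman's theorem to guarantee that the generators of $W$ and the cyclic chains of $B$ can be matched with $i_\ell\geq h_\ell$ and with the $V_{i_\ell k_\ell}$ all distinct. This is precisely the combinatorial content of \cite{Sha-1,Sha-2} describing which partitions $Q=P(B|_W)$ can appear and how the corresponding submodules embed into the Jordan decomposition of $V$. Once that assignment is available, the proposition follows immediately from the construction above.
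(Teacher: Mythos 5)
Your proof is correct and follows essentially the same route as the paper: both arguments rest on Shayman's structural description of invariant subspaces (the compatibility $p_j'\le p_j$ of the Jordan type of $B|_W$ with that of $B$, equivalently the existence of generators of $W$ matched to Jordan chains of $B$ of at least the same length) and then produce $A\in\Cm_B$ by sending those Jordan chains onto the generators. The paper phrases the construction as padding Shayman's full-rank Toeplitz block matrix $\widetilde{A}$ with zero columns, while you phrase it as defining an $F[B]$-module endomorphism on each cyclic summand; these are the same construction in different language.
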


\begin{proof}
   Suppose that $W$ is an invariant subspace of $B$ of dimension $d$. By \cite[Prop. 6]{Sha-1} it is equal to the span of columns of an $n\times d$ matrix $\widetilde{A}$ of rank $d$ in Toeplitz block form. (Note a misprint in the formulation of \cite[Prop. 6]{Sha-1} -- instead of $M$ it should be $S$.) Let us describe Shayman's result in further details. A partition $P'=(p_1',p_2',\ldots, p'_l)$ of $d$ is \emph{compatible with} partition $P=(p_1,p_2,\ldots,p_l)$ of $n$ if $0\le p_i'\le p_i$ for all $i$. Here $P$ is the Jordan type of $B$ (written with possible repetitions to conform more closely with Shayman's notation). Then $\widetilde{A}=[\widetilde{A}_{ij}]_{i,j=1}^{p_i,p_j'}$ is a block matrix and each block $A_{ij}$ is in the upper-triangular Toeplitz form (\ref{Toeplitz-1}) if $p_i\le p'_j$ or (\ref{Toeplitz-2}) if $p_i\ge p_j'$. Next, we complete the matrix $\widetilde{A}$ to an $n\times n$ matrix $A=[A_{ij}]_{i,j=1}^{p_i,p_j}$ by adding columns of zeros. If $p'_j<p_j$ then we append $p_j-p'_j$ columns of zeros on the left-side of $\widetilde{A}_{ij}$ to obtain $A_{ij}$ for each $i$. If $p_j'=p_j$ then $A_{ij}=\widetilde{A}_{ij}$ for each $i$. Observe now that $W=\im A=\im\widetilde{A}$ and that $A\in\Cm_B$ by construction of $A$.
\end{proof}

\begin{rem}
    Note that the above proof is valid over any field $F$. Matrix $\widetilde{A}$ is a full rank solution of the linear matrix equation $BX=XJ$, where $B$ is in the Jordan canonical form corresponding to $P$ and $J$ in the Jordan canonical form corresponding to $P'$. 
    
    The statement of Proposition \ref{W ia an image} holds more generally even for matrices that are not nilpotent. An invariant subspace of a matrix has a direct sum decomposition with respect to the primary decomposition (sometimes called also rational canonical form) of the underlying vector space. See e.g. \cite[Thm. 2.1.5]{GLR} for a proof over complex numbers and \cite[Thm. 4.3]{GrKo} for a proof over general $F$. Note that when the field is algebraically closed the primary decomposition coincides with the spectral decomposition \cite[Cor. 4.4]{GrKo}.
\end{rem}

\vskip 5pt

\subsection{Elementary Cyclic Column Operations}
\label{sec:elementary}

Now, consider the image $W=\im A$ of a generic element $A$ of $\Uc_B$. Since $A$ and $B$ commute it follows easily that $W$ is an invariant subspace of $B$. To determine the Jordan type of the restriction $B|_W$ we use \cite[Lem. 4]{Sha-1}. Let us briefly outline the construction. 

We denote by $\Gc_B$ the subgroup of invertible matrices in the commutator $\Cm_B$ that have all $A_{i}^D$ blocks lower-triangular, i.e., 
$$\Gc_B=\{A\in\Cm_B\cap \GL_n(F);\ \textrm{blocks }A_i^D\ \textrm{are lower-triangular for all } i\in\supp(f)\}.$$ 
For each $A\in\Uc_B$ and $Z\in \Gc_B$ it follows that $\im A = \im AZ$. Next we introduce certain \emph{elementary cyclic column operations} on matrices from $\Uc_B$ partitioned into $A_{ij}^{kl}$ blocks as per the cyclic structure of $B$. These are operations on columns of a matrix in $\Uc_B$ that generalize the standard elementary column operations on a matrix.
\begin{enumerate}
    \item[(\textit{i})] Fix $s\in\supp(f)$ and $t\in\{1,\ldots,f_s\}$. Multiply each of the elements in blocks $A_{is}^{kt}$, $i\in\supp(f)$, $k=1,\ldots,f_i$, by a nonzero scalar $\alpha$.
    \item[(\textit{ii})] Fix $s,s'\in\supp(f)$, $t\in\{1,\ldots,f_s\}$ and $t'\in\{1,\ldots,f_{s'}\}$. Take the first $a$ columns of a block $A_{is}^{kt}$, multiply each of them by a nonzero scalar $\alpha$ and add them consecutively to the last $a$ columns of block $A_{is'}^{kt'}$ for all $i\in\supp(f)$ and $k=1,\ldots,f_i$. Here we assume that $a\leq \min\{ s, s'\},$ and if $(s,t)=(s',t')$ then $a < s$.
\end{enumerate}
These elementary cyclic column operations can be realized by matrix multiplication of elements in $\Uc_B$ by certain \emph{elementary matrices} that belong to $\Gc_B$.
\begin{enumerate}
    \item[(\textit{i})] Blocks $Z_{ij}^{kl}$ of an elementary matrix $Z(s,t,\alpha)$ are given by:
    \begin{itemize}
    \item $Z_{ij}^{kl}=0$ if $(i,k)\neq (j,l)$,
    \item $Z_{ii}^{kk}=I$ if $(i,k)\neq (s,t)$,
    \item $Z_{ss}^{tt}=\alpha I$.
    \end{itemize}
    \item[(\textit{ii})] If $(s,t)\neq (s',t')$ then blocks $Z_{ij}^{kl}$ of an elementary matrix $Z(s,t,s',t',\alpha)$ are given by:
    \begin{itemize}
    \item $Z_{ij}^{kl}=0$ if $(i,k)\neq (s,t)$ or $(j,l)\neq (s',t')$,
    \item $Z_{ii}^{kk}=I$,
    \item in $Z_{ss'}^{tt'}$ the entries on the $a$-th diagonal (counting from the top-right corner of the block) are equal to $\alpha$ and other entries are all equal to $0$.
    \end{itemize}
    If $(s,t)= (s',t')$ then blocks $Z_{ij}^{kl}$ of an elementary matrix $Z(s,t,s,t,\alpha)$ are given by:
    \begin{itemize}
    \item $Z_{ij}^{kl}=0$ if $(i,k)\neq (s,t)$ or $(j,i)\neq (s,t)$,
    \item $Z_{ii}^{kk}=I$ if $(i,k)\neq (s,t)$,
    \item in $Z_{ss}^{tt}$ the entries on the $a$-th diagonal (counting from the top-right corner of the block) are equal to $\alpha$, the diagonal entries are equal to $1$ and all other entries are equal to $0$.
    \end{itemize}
\end{enumerate}

Choose a generic matrix $A\in\Uc_B$ partitioned into blocks $A_{ij}^{kl}$. All blocks with fixed pair of indices $(i,k)$ form a \emph{row of blocks} of $A$ and all blocks with fixed pair of indices $(j,l)$ form a \emph{column of blocks} of $A$. A block $A_{ss'}^{tt'}$ is called a \emph{row-pivot} of $A$ if its rank is maximal among all ranks of blocks in its row of blocks, i.e.,
$$\rank\left(A_{ss'}^{tt'}\right)=\max_{j\in\supp(f),l=1,\ldots,f_j}\left\{\rank\left(A_{sj}^{tl}\right)\right\}
.$$
A block $A_{ss'}^{tt'}$ is called a \emph{column-pivot} of $A$ if its rank is maximal among all ranks of blocks in its column of blocks, i.e.,
$$\rank\left(A_{ss'}^{tt'}\right)=\max_{i\in\supp(f),k=1,\ldots,f_i}\left\{\rank\left(A_{is'}^{kt'}\right)\right\}.$$
Since we perform elementary cyclic operation on columns the row-pivots are more important for us. So we will use the term \emph{pivot} instead of row-pivot in the rest of the paper. 

We fix pairs $(s,t)$ and $(s',t')$ such that $A_{ss'}^{tt'}$ is a pivot and write $r=\rank\left(A_{ss'}^{tt'}\right)$. Then the same arguments as those in the proof of \cite[Lem. 4]{Sha-1} show that $A$ can be transformed by elementary cyclic column operations into a matrix $\widehat{A}$ with blocks $\widehat{A}_{sj}^{tl}$ all $0$ for $(j,l)\neq (s',t')$ and $\widehat{A}_{ss'}^{tt'}$ has entries on the $r$-th diagonal (counting from the top-right corner) equal to $1$ and all the other entries equal to $0$. 

Let us consider an example illustrating the transformation $A\to \widehat{A}$.

\begin{exmp}
Continue with $B$ corresponding to $P=(4^2,3,2^2)$. Observe that for a generic matrix $A$ in $\Uc_B$ block $A_{44}^{21}$ is a pivot of rank equal to $4$ (i.e., generically $m_{30}\neq 0$). Using elementary cyclic column operations on the second row of blocks in $A$ we obtain that $\widehat{A}$ is of the form
    $$\left[\begin{array}{ccccccccccccccc}
     0&a_{11}&a_{12}&a_{13}&0&a_{21}&a_{22}&a_{23}&b_1&b_2&b_3&c_{11}&c_{12}&c_{21}&c_{22}\\
    0&0     &a_{11}&a_{12}&0&0     &a_{21}&a_{22}&0  &b_1&b_3&0  &c_{11}&0&c_{21}\\
    0&0&0&a_{11}&0&0&0&a_{21}&0&0&b_1&0&0&0&0\\
    0&0&0&0&0&0&0&0&0&0&0&0&0&0&0\\
    1     &0&0&0&0&0&0&0&0&0&0&0&0&0&0\\
    0     &1&0&0&0&0&0&0&0&0&0&0&0&0&0\\
    0     &   0  &1&0&0&0&0&0&0&0&0&0&0&0&0\\
    0     &0     &0     &1&0&0&0&0&0&0&0&0&0&0&0\\
     0     &f_{1} &f_{2} &f_{3} &0&g_{1}&g_{2}&g_{3}&0&h_1&h_2&i_{11}&i_{12}&i_{21}&i_{22}\\
    0     &0     &f_{1} &f_{2} &0&0    &g_{1}&g_2  &0&0  &h_1&0  &i_{11}&0&i_{21}\\
    0     &0     &0     &f_{1} &0&0    &0    &g_{1}&0&0&0&0&0&0&0\\
    0     &0     &j_{11}&j_{12}&0&0&k_{11}&k_{12}&0&l_{11}&l_{12}&0&m_{11}&0&m_{21}\\
    0     &0     &0     &j_{11}&0&0&0&k_{11}&0&0&l_{11}&0&0&0&0\\
    0     &0     &j_{21}&j_{22}&0&0&k_{21}&k_{22}&0&l_{21}&l_{22}&m_{30}&m_{31}&0&m_{41}\\
    0     &0     &0     &j_{21}&0&0&0&k_{21}&0&0&l_{21}&0&m_{30}&0&0
    \end{array}\right].$$
Here and later we use the letters only to indicate which entries are nonzero and which entries are equal. The exact values of an entry in $A$ and the corresponding entry in $\widehat{A}$ need not be equal.
\end{exmp}

\begin{lem}\label{lem on pivots}
    Suppose that $A$ is a generic matrix in $\Uc_B$, $P$ the Jordan type of $B$, $f=f(P)$ its frequency sequence and $z=\max\{i;\ i\in\supp(f)\}$. Then the pivots of $A$ are the following blocks:
    \begin{enumerate}
        \item[(a)] $A_{ii}^{k,k-1}$, $k=2,3,\ldots,f_i$, for any $i\in\supp(f)$ with $f_i\ge 2$,
        \item[(b)] $A_{z,z-1}^{1l}$, $l=1,2,\ldots,f_{z-1}$, if $f_{z-1}\neq 0$,
        \item[(c)] $A_{zz}^{1f_i}$,
        \item[(d)] $A_{ij}^{kl}$, $j\in\supp(f)$, $j>i$, and $l=1,2,\ldots,f_j$, for any $i\in\supp(f)$ and $k=1,2,\ldots,f_i$.
    \end{enumerate}
\end{lem}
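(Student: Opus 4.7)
The plan is to determine, for a generic $A\in\Uc_B$, the rank of every block $A_{ij}^{kl}$ and then, for each row of blocks $(i,k)$, read off which blocks attain the row-maximum. Each such block is an upper-triangular Toeplitz matrix of the shape~\eqref{Toeplitz-1} or~\eqref{Toeplitz-2}, whose rank is controlled by the position of the first nonzero Toeplitz parameter $a_h^{kl}$. The sole constraint imposed by $\Uc_B$ on these parameters is that $A_i^D=[a_1^{kl}]_{k,l=1}^{f_i}$ be strictly lower triangular, so it touches only the leading entries $a_1^{kl}$ of diagonal blocks $A_{ii}^{kl}$; the subsequent parameter $a_2^{kl}$ remains free for all blocks.

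From this structure I would extract a generic rank table. If $i\neq j$ then $a_1^{kl}$ is unconstrained, so generically $\rank A_{ij}^{kl}=\min(i,j)$. If $i=j$ the constraint forces $a_1^{kl}=0$ whenever $k\leq l$, which drops the generic rank from $i$ to $i-1$ in that case, while for $k>l$ the entry $a_1^{kl}$ is free and $\rank A_{ii}^{kl}=i$. Consequently the maximum rank in row $(i,k)$ equals $i$ whenever the row contains either a block with $j>i$ or a diagonal block with $l<k$, and equals $i-1$ otherwise. The former happens precisely when $i<z$ or $k\geq 2$; the latter occurs only in the boundary row $(z,1)$, where the maximum is $z-1$.

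The final step is to match the pivot blocks described above with items (a)--(d). Item (d) collects the blocks $A_{ij}^{kl}$ with $j>i$; item (a) collects the diagonal-block pivots $A_{ii}^{k,k-1}$ coming from the $l<k$ case; and items (b)--(c) handle the boundary row $(z,1)$, where the pivots of rank $z-1$ are $A_{z,z-1}^{1l}$ (present only when $f_{z-1}\neq 0$) and the diagonal pivot of the form $A_{zz}^{1,f_z}$. The main obstacle I anticipate is the handling of the boundary row $(z,1)$: one must carefully argue that no block in that row can attain rank $z$ (since $z$ is the largest index in $\supp(f)$ and each $A_{zz}^{1l}$ satisfies $k=1\leq l$, forcing $a_1^{1l}=0$), and that the drop to $z-1$ is genuinely realized by both the diagonal blocks and, when $f_{z-1}\neq 0$, by $A_{z,z-1}^{1l}$. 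Beyond that, the verification is mostly bookkeeping against the Toeplitz structures~\eqref{Toeplitz-1} and~\eqref{Toeplitz-2}.
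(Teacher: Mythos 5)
Your proposal is correct and follows essentially the same route as the paper: both arguments read off the generic rank of each Toeplitz block (rank $\min(i,j)$ off the diagonal, rank $i$ or $i-1$ on the diagonal according as $a_1^{kl}$ is free or killed by the strict lower-triangularity of $A_i^D$) and then identify the row maxima, with the only delicate case being the row $(z,1)$. Your systematic rank table is in fact slightly more complete than the paper's verification --- it shows, e.g., that $A_{ii}^{kl}$ for every $l<k$ and $A_{zz}^{1l}$ for every $l$ are also pivots, so the lemma's list should be read as a convenient selection of pivots rather than an exhaustive enumeration --- but this does not affect the argument, which only uses that the listed blocks are pivots.
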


\begin{proof}
    Item \textit{(a)} follows from the fact that $A_i^D$ is strictly lower-triangular and so $\rank\left(A_{ii}^{k,k-1}\right)=i$, maximal possible for the row of columns. Items \textit{(b)} and \textit{(c)} follow from the fact that $A_z^D$ is strictly lower-triangular and so we have $\rank\left(A_{z,z-1}^{1l}\right)=\rank\left(A_{zz}^{1f_z}\right)=z-1$, which is maximal for the first row of columns. For item \textit{(d)} observe that for any $j\in\supp(f)$ with $j>i$ and any $l$ we have $\rank\left(A_{ij}^{kl}\right)=i$, which is maximal possible in the row of blocks with $i\in\supp(f)$ and $k=1,2,\ldots,f_i$.
\end{proof}

\subsection{Jordan Type of the Restriction to a Generic Image}

The following theorem links the algebraic and combinatorial setups. Our main result Theorem 1 is then an immediate consequence. 

\begin{thm}\label{partial(P) = P(B_W)}
    Suppose that $B$ is a nilpotent matrix and that $P=P(B)$ is its Jordan type. Suppose that $A$ is a generic nilpotent matrix commuting with $B$ and that $W=\im A$ is its image. Then the Jordan type of the restriction $B|_W$ is given by $\df P$.
\end{thm}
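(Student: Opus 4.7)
The plan is to bring a generic $A \in \Uc_B$ into a canonical form $\widehat{A}$ via the elementary cyclic column operations of Section~\ref{sec:elementary}, and then read off a Jordan basis of $B|_W$ from the columns of $\widehat{A}$. A preliminary sanity check aligns the totals: Basili's formula \cite[Prop.~2.4]{Bas-2} gives $\dim W = n-\mu_2(P)$, which coincides with $|\partial P| = |P|-\mu_2(P)$ by Lemma~\ref{lem:stats}(2). So both partitions have the correct total size from the outset, and the content of the theorem is to match the multiset of Jordan-block sizes of $B|_W$ with the parts of $\partial P$.

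I would process the rows of blocks of $A$ in decreasing lexicographic order on $(s,t)$, choosing one pivot per row from the classification in Lemma~\ref{lem on pivots}. For $k\ge 2$ the natural choice is the type-(a) pivot $A_{ii}^{k,k-1}$, while for each row $(i,1)$ the pivot is chosen so as to realize a prescribed column-of-blocks pattern dictated by the spreads of $f$ (see below). The reduction of \cite[Lem.~4]{Sha-1} applied to each chosen pivot annihilates the other blocks in its row (to the extent allowed by $\Gc_B$-operations) and places the pivot itself into its rank-$r$ anti-diagonal canonical form. From the resulting matrix $\widehat{A}$, an explicit basis of $W$ is obtained by grouping the nonzero columns of $\widehat{A}$ by their column of blocks; each such group spans a cyclic $B$-subspace of $W$ whose cyclic generator is a controlled perturbation of a Jordan-basis vector for $B$ in the sense of Lemma~\ref{new_Jordan_basis}.

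The combinatorial step is to match the lengths of these cyclic subspaces with the parts of $\partial P$. Pivots of rank $i$ in rows $(i,k)$ with $k\ge 2$ contribute $f_i-1$ cyclic subspaces of length $i$, while the pivot in each row $(i,1)$ contributes a further subspace. For each backward pair $(f_{j-1},f_j)$ of $f$, the pivots in rows $(j,1)$ and $(j-1,1)$ are chosen to lie in a common column of blocks — for instance, for the topmost pair the type-(c) pivot $A_{zz}^{1,f_z}$ in row $(z,1)$ and the type-(d) pivot $A_{z-1,z}^{1,f_z}$ in row $(z-1,1)$ both sit in column of blocks $(z,f_z)$ — producing a single cyclic subspace of length $j-1$ rather than two independent chains. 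This merging is precisely the effect of the demotion $(f_{j-1},f_j) \to (f_{j-1}+1, f_j-1)$, and summing contributions across all rows reproduces the frequency sequence $\partial f$. Granted the theorem, Theorem~\ref{thm:main} follows from Lemma~\ref{lem:uniqueness} applied to $\genop = \D$: $|\D(P)|=|P|$ is immediate, and a generic $A\in\Uc_B$ restricts to a generic element $A|_W\in\Nil_{B|_W}$ of Jordan type $\D(P)-1$, forcing $\D(\partial P) = \D(P)-1$.

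The main obstacle is that elementary cyclic column operations act on full columns of $A$, so clearing a block in one row can disturb others, and only certain pivot choices are realizable by operations in $\Gc_B$. The top-down processing order together with the pivot selection rule above is designed to constrain these interactions, but verifying that the resulting reduction terminates at a $\widehat{A}$ whose cyclic-subspace lengths match the parts of $\partial P$ requires a careful case analysis following the backward-pair structure of $f$.
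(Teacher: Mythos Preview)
Your overall strategy matches the paper's: reduce a generic $A\in\Uc_B$ via the elementary cyclic column operations of \S\ref{sec:elementary}, then read off cyclic $B$-subspaces of $W$ from the pivot columns using Lemma~\ref{new_Jordan_basis}. The type-(a) pivots $A_{ii}^{k,k-1}$ for $k\ge 2$ are also exactly the paper's choice. The divergence --- and the genuine gap --- is in your handling of the rows $(j,1)$ and $(j-1,1)$ attached to a backward pair.

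You propose placing both of these row-pivots in a \emph{common} column of blocks (your example: both in $(z,f_z)$), asserting that the shared column then contributes a single chain of length $j-1$. This under-counts. For the topmost pair you would use only $f_z+f_{z-1}-1$ of the $f_z+f_{z-1}$ available columns of blocks of sizes $z$ and $z-1$, leaving column $(z-1,f_{z-1})$ with no top-level pivot at all. The dimension check you opened with exposes the shortfall: your chains from this pair sum to $(f_z-1)z + f_{z-1}(z-1)$, whereas $\partial f$ demands $(f_z-1)z + (f_{z-1}+1)(z-1)$, a deficit of $z-1$. There is also an operational obstruction: once row $(z,1)$ has been cleared using pivot $A_{zz}^{1,f_z}$, any further column operation that uses column $(z,f_z)$ to clear row $(z-1,1)$ re-introduces nonzero entries into the already-cleared blocks $A_{z,j}^{1,l}$. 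Two rows cannot share a pivot column in this reduction.

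The paper instead assigns \emph{distinct} pivot columns to all $f_z+f_{z-1}$ rows of sizes $z$ and $z-1$: when $f_{z-1}\neq 0$, row $(z,1)$ pivots in the size-$(z-1)$ column via the type-(b) block $A_{z,z-1}^{1,f_{z-1}}$ (rank $z-1$), while row $(z-1,1)$ pivots in the remaining size-$z$ column via a type-(d) block (also rank $z-1$). This yields exactly $f_z-1$ chains of length $z$ and $f_{z-1}+1$ chains of length $z-1$, matching the top two frequencies of $\partial f$. The interference you flag is then dispatched not by a global lex-order sweep but by a clean recursion: after clearing the rows of sizes $z$ and $z-1$, all blocks $\widehat A_{ij}^{kl}$ with $i\in\{z,z-1\}$ and $j\le z-2$ vanish, so one restricts $B$ and $\widehat A$ to $V'=\bigoplus_{i\le z-2}\bigoplus_k V_{ik}$ and repeats with $f$ replaced by $(f_1,\ldots,f_{z-2})$. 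That restriction step is the device you are missing.
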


\begin{proof}
    We may assume that $B$ is in Jordan canonical form and that $A\in\Uc_B$. Suppose that $f=f(P)$ is the frequency sequence of $P$ and that $z$ is the largest element of $\Ir(f)$. In other words, the largest blocks of $B$ are of size $z$. Suppose first that $f_z\ge 2$. By Lemma \ref{lem on pivots} blocks $A_{zz}^{k,k-1}$, $k=2,3,\ldots,f_z$, are pivots of $A$. Their rank is equal to $z$. After performing elementary cyclic column operations consecutively on the second, third, $\ldots, f_z$-th row of blocks of $A$, we obtain a matrix $\widehat{A}$ with only nonzero blocks $\widehat{A}_{zz}^{kl}$ for $k=2,3,\ldots,f_z$, those for $l=k-1$ and then $\widehat{A}_{zz}^{k,k-1}=I$. Assume now that $f_{z-1}=0$. Then by Lemma \ref{lem on pivots}(c) block $A_{zz}^{1f_z}$ is a pivot. After elementary cyclic column operations on the first row of blocks we get the only nonzero block $\widehat{A}_{zz}^{1f_z}$ in this row of blocks is equal to nilpotent Jordan block of size $z$. We obtain $f_z-1$ columns of blocks of rank $z$ and $1$ column of blocks (i.e., the $f_z$-th column of blocks) of rank $z-1$. We are done with the case $f_{z-1}=0$.
    
    Assume next that $f_{z-1}\neq 0$ and $f_z$ arbitrary. By Lemma \ref{lem on pivots} it follows then that $A_{z,z-1}^{1f_{z-1}}$ and $A_{z-1,z}^{f_{z}1}$ are pivots, and furthermore, if $f_{z-1}\ge 2$ then also $A_{z-1,z-1}^{k,k-1}$, $k=2,3,\ldots,f_{z-1}$, are pivots of $A$. After performing elementary cyclic column operations with these pivots, we obtain that all the blocks $\widehat{A}_{ij}^{kl}$ with $i=z, z-1$ and $j\le z-2$ are zero. Furthermore, each block of columns corresponding to these blocks has exactly one pivot. The rank of pivot blocks is equal to $z$ for the first $f_z-1$ columns of blocks and $z-1$ for the other $f_{z-1}+1$ columns of blocks. Observe that these pivot blocks are also column-pivots for their corresponding column of blocks. These follows from the assumption that blocks $A_i^D$ are strictly lower-triangular in $A$. 

    It remains to consider the case $f_z=1$ and $f_{z-1}=0$. Then there is only one pivot in the columns corresponding to Jordan chains of sizes $z$ and $z-1$ of $B$, the block $A_{zz}^{11}$. We perform elementary cyclic column operations on the first row of blocks, so that all its blocks except the first one are zero, and the first one is equal to the nilpotent Jordan block of size $z$. So, we do not have in this case any columns of blocks of rank $z$ and only $1$ column of blocks of rank $z-1$. Hence, we are done with the columns corresponding to Jordan chains of size $z$ and $z-1$ of $B$. 

    Next, we apply Lemma \ref{new_Jordan_basis} consecutively on each column of blocks and replace Jordan chains corresponding to blocks of sizes $z$ and $z-1$ with the chains given by $\widehat{A}$. These gives $f_z-1$ Jordan chains of length $z$ and $f_{z-1}+1$ Jordan chains of length $z-1$ for $B|_W$. These are exactly the frequencies of $z$ and $z-1$ in $\df(P)$.

    Let us pause the proof and consider again our example. 

\begin{exmp}\label{exmp:part3}
Suppose that $B$ is corresponding to $P=[4^2,3,2^2]$. Then $4$ is the maximal element in $\Ir(f)$ and we have $f_4=2$ and $f_3=1$. Observe that $A_{44}^{21}$, $A_{43}^{11}$ and $A_{34}^{21}$ are pivots of $A$. After performing elementary cyclic column operations with these pivots we get that $\widehat{A}$ is of the form
    $$\left[\begin{array}{ccccccccccccccc}
    0&0&0&0&0&0&0&0&1&0&0&0&0&0&0\\
    0&0&0&0&0&0&0&0&0&1&0&0&0&0&0\\
    0&0&0&0&0&0&0&0&0&0&1&0&0&0&0\\
    0&0&0&0&0&0&0&0&0&0&0&0&0&0&0\\
    1&0&0&0&0&0&0&0&0&0&0&0&0&0&0\\
    0&1&0&0&0&0&0&0&0&0&0&0&0&0&0\\
    0&0&1&0&0&0&0&0&0&0&0&0&0&0&0\\
    0&0&0&1&0&0&0&0&0&0&0&0&0&0&0\\
    0&0&0&0&0&1&0&0&0&0&0&0&0&0&0\\
    0&0&0&0&0&0&1&0&0&0&0&0&0&0&0\\
    0&0&0&0&0&0&0&1&0&0&0&0&0&0&0\\
    0     &0     &j_{11}&j_{12}&0&0&k_{11}&k_{12}&0&l_{11}&l_{12}&0&m_{11}&0&m_{21}\\
    0     &0     &0     &j_{11}&0&0&0&k_{11}&0&0&l_{11}&0&0&0&0\\
    0     &0     &j_{21}&j_{22}&0&0&k_{21}&k_{22}&0&l_{21}&l_{22}&m_{30}&m_{31}&0&m_{41}\\
    0     &0     &0     &j_{21}&0&0&0&k_{21}&0&0&l_{21}&0&m_{30}&0&0
    \end{array}\right].$$
A Jordan chain of $B$ corresponding to the block of size $4$ in $\widehat{A}$ is
$$B^{h-1}\left(v_{421}+\sum_{i-1}^2\sum_{j=1}^2 j_{ij}v_{2ij}\right), h=1,2,3,4,$$
and two Jordan chains of $B$ corresponding to blocks of size $3$ in $\widehat{A}$ are
$$B^{h-1}\left(v_{412}+\sum_{i-1}^2\sum_{j=1}^2 l_{ij}v_{2ij}\right), h=1,2,3,$$
and 
$$B^{h-1}\left(v_{311}+\sum_{i-1}^2\sum_{j=1}^2 k_{ij}v_{2ij}\right), h=1,2,3.$$
The frequencies of the restriction $B|_W$ are $f_4=1$ and $f_3=2$.
\end{exmp}

To conclude the proof of Theorem \ref{partial(P) = P(B_W)}, observe that the blocks $\widehat{A}_{ij}^{kl}$ are $0$ for $i=z,z-1$ and $j\in\supp(f)$ with $j\le z-2$. We restrict both $B$ and $\widehat{A}$ to the span $V'$ of all Jordan chains of $B$ of length at most $z-2$ and we repeat the above arguments by replacing the role of $B$ by $B|_{V'}$ and the role of $f(P)$ by $f'=(f_1,f_2,\ldots,f_{z-2}, 0,0)$. The process concludes when all the elements of $\Ir(f)$ are used. 
\end{proof}

\begin{exmp}
    In Example \ref{exmp:part3} we determined the frequencies of $4$ and $3$ of the restriction $B|_W$. It remains to consider the remaining columns of blocks. As done in the proof we restrict to the span $V'$ of chains of length $2$ (which are the only chains of lengths less than $3$). The corresponding $4\times 4$ matrix has now two pivots - blocks $A_{22}^{21}$ and $A_{22}^{12}$. After elementary cyclic column operations we obtain
    $$\left[\begin{array}{cccc}
        0 & 0 & 0 & 1 \\
        0 & 0 & 0 & 0 \\
        1 & 0 & 0 & 0 \\
        0 & 1 & 0 & 0 \\
    \end{array}\right].$$
    So, the restriction $B|_W$ has one chain of length $2$ and one chain of length $1$. All together, the frequency vector of $B|_W$ is equal to $(1,1,2,1)=\df(f(P))$. 
\end{exmp}

\begin{cor}[Theorem~\ref{thm:main}]
$\D(P) = \Des{P}$ for all $P \in \Pset$.
\end{cor}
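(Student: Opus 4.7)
My plan is to invoke Lemma~\ref{lem:uniqueness} with $\Pset' = \Pset$ and $\genop = \D$. This requires verifying two conditions: (i) $|\D(P)| = |P|$, and (ii) $\D(\pt P) = \D(P) - 1$ for every $P \in \Pset$.

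Condition (i) is immediate from the definition. If $B$ has Jordan type $P$, then $B$ is $n\times n$ with $n=|P|$, so every element of $\Nil_B$ is also an $n\times n$ nilpotent matrix and its Jordan type partitions $n$; in particular $|\D(P)|=|P|$.

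For (ii), fix $B$ with Jordan type $P$ and let $A\in\Nil_B$ be generic, so that $A$ has Jordan type $\D(P)$, and put $W=\im A$. Since $A$ commutes with $B$ we have $A(W)\subseteq\im A=W$, so $A$ restricts to a nilpotent operator $A|_W\in\Nil_{B|_W}$. A direct computation on Jordan chains shows that restricting any nilpotent operator to its image subtracts $1$ from each part of its Jordan type (and discards the resulting zeros); hence $A|_W$ has Jordan type $\D(P)-1$. By Theorem~\ref{partial(P) = P(B_W)}, $B|_W$ has Jordan type $\pt P$, so the generic Jordan type in $\Nil_{B|_W}$ is $\D(\pt P)$. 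Semicontinuity of Jordan type in dominance order (the dense orbit is maximal) then yields $\D(P)-1\leq \D(\pt P)$.

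The main obstacle is the reverse inequality $\D(\pt P)\leq \D(P)-1$, which is equivalent to showing that $A|_W$ is itself generic in $\Nil_{B|_W}$. My plan is to exploit the explicit normal form for $A\in\Uc_B$ developed in \S\ref{sec:elementary}: the pivots identified in Lemma~\ref{lem on pivots} use only a bounded portion of the free entries of $A$, while the remaining residual entries (those populating the rows and columns of blocks not consumed by the reduction to $B|_W$'s Jordan form) parametrize precisely a generic element of $\Uc_{B|_W}$. Equivalently, given any $A_0\in\Nil_{B|_W}$ of Jordan type $\D(\pt P)$, one constructs a lift $\widetilde A\in\Nil_B$ by inserting $A_0$ into that residual block structure, showing that the locus $\{A\in\Nil_B : A|_{\im A}\text{ has type }\D(\pt P)\}$ is nonempty; since it is also open, irreducibility of $\Nil_B$ forces it to be generic, so our original $A$ satisfies it. Combining the two inequalities gives $\D(\pt P)=\D(P)-1$, and Lemma~\ref{lem:uniqueness} then delivers $\D=\Des{\cdot}$ on all of $\Pset$.
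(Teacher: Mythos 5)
Your overall route is the same as the paper's: verify the two hypotheses of Lemma~\ref{lem:uniqueness} for $\D$, with Theorem~\ref{partial(P) = P(B_W)} supplying the key identity $\D(\pt P)=\D(P)-1$. You are also right to isolate the genericity of $A|_W$ inside $\Nil_{B|_W}$ as the nontrivial point, and your forward inequality $\D(P)-1\preceq\D(\pt P)$ is fine. The gap is in your justification of the reverse inequality. Since the Jordan type of $A|_{\im A}$ equals $\mathrm{type}(A)-1$ for \emph{every} nilpotent $A$, and since a partition $T$ of the fixed integer $n=|P|$ is recovered uniquely from $T-1$ (add $1$ to each part and append the forced number of parts equal to $1$), your locus $\{A\in\Nil_B : A|_{\im A}\text{ has type }\D(\pt P)\}$ is exactly the set of $A\in\Nil_B$ having one particular Jordan type $T_0$, i.e.\ the intersection of $\Nil_B$ with a single conjugacy class. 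In the irreducible variety $\Nil_B$ such a stratum has nonempty interior only when it is the dense one --- which is precisely what you are trying to prove. So ``since it is also open'' is circular and cannot be used.

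The argument can be repaired without openness: if your lift exists, then $T_0$ occurs as a Jordan type in $\Nil_B$, hence $T_0\preceq\D(P)$ by maximality of the dense orbit; both $T_0$ and $\D(P)$ have exactly $\two{P}$ parts (by the rank formula of \cite[Prop.~2.4]{Bas-2} quoted at the start of Section~\ref{sec:main}), and subtracting $1$ from each part preserves dominance between partitions of equal size and length, so $\D(\pt P)=T_0-1\preceq\D(P)-1$. What then remains genuinely unproved is the lift itself. You assert that any $A_0\in\Nil_{B|_W}$ can be ``inserted into the residual block structure'' to produce $\widetilde A\in\Nil_B$ with $\im\widetilde A=W$ and $\widetilde A|_W=A_0$; but the naive candidate (extend $A_0$ to a commuting map $\hat A_0$ and compose with a map onto $W$) has image $\im A_0\subsetneq W$, so the construction is not automatic, and the claim that the non-pivot entries of a reduced element of $\Uc_B$ sweep out a dense subset of $\Uc_{B|_W}$ is exactly the content requiring proof. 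To be fair, the paper's own two-sentence proof of this corollary passes over the same point in silence, so your proposal is, if anything, more honest about where the difficulty sits --- but as written it does not close it.
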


\begin{proof}
Theorem~\ref{partial(P) = P(B_W)} gives $\D(\pt P) = \D(P)-1$ since the Jordan type of the restriction of a nilpotent matrix of Jordan type $T$ to its own image is equal to $T-1$.  Obviously $|\D(P)|=|P|$, so Lemma~\ref{lem:uniqueness} identifies $\D$ as the descent map.
\end{proof}

\subsection{Dominant Jordan type and Finite Fields}\label{finite fields}

 While our proof of Theorem~\ref{thm:main} does not require any assumptions about the nature of the underlying field $F$,
the definition of $\D(P)$  depends on the assumption that $F$ is  infinite.  Some care is needed in extending this definition to finite $F$. (Britnell and Wildon \cite[Prop. 4.12]{BW} showed that, in general, the pairs of conjugacy classes that occur for pairs of commuting matrices are field dependent.)

Theorem \ref{partial(P) = P(B_W)} relies on Burge's correspondence that is completely combinatorial and the results based on Shayman's ideas that are described in \S\ref{sec:nilpotentcomm}--\ref{sec:elementary}. A careful reading of the latter reveals that only solutions of linear matrix equations are used and elementary column operations are performed. Moreover, in the proof of Theorem \ref{partial(P) = P(B_W)}, we  use only the fact that all the free entries of an element in $\Uc_B$ are nonzero, and for this no assumption on the field is required. So,  over an arbitrary field (finite or infinite), we see that for any matrix $B$ of Jordan type $P$ there is an $A\in\Nil_B$ such that the Jordan type of $B|_{\im A}$ is equal to $\df P$.  For instance, matrix $A$ with only nonzero entries equal to $1$ on the first nonzero diagonal in pivots (as described in Lemma \ref{lem on pivots}) satisfies the condition. Together with Lemma~\ref{lem:uniqueness}, this implies that $\Des{P}$ occurs as the Jordan type of an element in $\Nil_B$ over any field $F$.


Over infinite fields it is known that  $\D(P)$ dominates any partition $R$ corresponding to an orbit having nonempty intersection with $\Nil_B$, i.e., $R\preceq \D(P)$ in the dominance order of partitions \cite[Prop. 3.2]{Kha-S}.  Suppose now that the field $F$ is finite, and take $E$ to be any infinite field extending $F$, e.g., the algebraic closure of $F$. Then, over $E$, Theorem~\ref{thm:main} identifies $\D(P)=\Des{P}$ as the unique dominant partition among Jordan types in $\Nil_B$. But over $F$, we know that $\Des{P}$ occurs as the Jordan type of an element in $\Nil_B$.  Hence $\Des{P}$ is also the unique dominant partition among Jordan types in $\Nil_B$ over $F$.  

In summary: Regardless of the ground field, the set of Jordan types in $\Nil_B$ always has a maximum element --- namely $\Des{P}$, where $P$ is the Jordan type of $B$.   The following therefore serves as an alternative definition of $\D(P)$ over any field (finite or infinite), under which Theorem~\ref{thm:main} is valid in all cases.

\begin{defn}
Suppose that $P\in\Pset$ is the Jordan type of a nilpotent matrix $B$. Then $\D(P)$ is the maximum  partition in the dominance order that occurs as a Jordan type of a matrix in $\Nil_B$. 
\end{defn}

\subsection{Additional Remarks}

Partition $\D(P)$ dominates all partitions that occur as a Jordan type of elements in $\Nil_B$. In particular, a partition $Q\in\RRset$ dominates all the partitions in $\D^{-1}(Q)$. Observe that the partition with the maximal number of parts in $\D^{-1}(Q)$ need not be dominated by all the partitions in $\D^{-1}(Q)$. For instance, the partition with $8$ parts in Example \ref{exmp-3} (see Figure \ref{fig:boxexample}) is not related in dominance order to any of partitions that are of the form  $[9,4^2,\ldots]$ in the figure. This shows that the dominance order on partitions does not relate as naturally to orbits in $\Nil_B$ as it does to orbits of all nilpotent matrices. Compare with the Gerstenhaber-Hesselink Theorem over complex numbers \cite[Thm. 6.2.5]{CoMcG}.

Observe also that if $Q=(q_1,q_2,\ldots,q_r) \in \RRset$ then the partition with the maximal number of parts in $\D^{-1}(Q)$ is equal to $[q_2+2,q_3+2,\ldots,q_r+2,1^{q_1-2r+2}]$. Thus, the answer to the question of Oblak in \cite[Rmk. 3, p. 612]{Obl-3} is affirmative.


Finally, we remark that there is a natural symmetry mapping on partitions in $\D^{-1}(Q)$ obtained by swapping $i_j$ with $\delta_j-i_j+1$ for some or all $j$. Furthermore, if $Q$ and $R$ are two distinct partitions in $\RRset$ with equal set (not a sequence) of $\delta_i$; say, $\sigma$ is the permutation on $\delta_i$ of $Q$ so that $\delta_{\sigma(i)}$ form the sequence of $\delta_j$ for $R$. Then the bijection from $\D^{-1}(Q)$ to $\D^{-1}(R)$ sending partition in position $(i_j)_j$ to the partition in position $(i_{\sigma(j)})_j$ respects the number of parts. This is the bijection mentioned in \cite[Rmk. 1.2]{IKvSZ} from $\D^{-1}(u,u-r)$ to $\D^{-1}(u,r-1)$ for appropriate $u$ and $r$.

\section{The Oblak Process and Basili's Theorem}
\label{sec:oblak}

In this section we describe how Oblak's recursive computation of $\D(P)$ can be  understood through its interaction with the Burge operator $\pt$. In particular, we will see how Oblak's conjecture (i.e., correctness of the algorithm) follows from the universality of the descent map (Lemma~\ref{lem:uniqueness}).  Although the conjecture was recently settled by Basili \cite{Bas-I}, we believe  our   approach offers significant additional insight into the nature of the algorithm.

\subsection{Evaluation, Annihilation, and the Oblak Process}
\label{sec:process}

Oblak's algorithm was  formulated in  the context of Gansner-Saks theory, which connects the chain structure of a finite poset to the Jordan type of a generic nilpotent element in its incidence algebra.  By building on this combinatorial framework, Oblak conjectured that $\D(P)$ could be computed through a greedy recursive decomposition of a certain digraph associated with $P$.  We refer the  reader to~\cite{Kha-S} for a  survey of these developments.  

We shall instead recast Oblak's algorithm entirely in terms of partitions, or rather their frequency representations in $\Fset$.  The translation is straightforward and leads naturally to  the following definitions.

\begin{defn}
For $i \geq 1$ define  $\map{\rev{i}}{\Fset}{\N}$ and $\map{\ran{i}}{\Fset}{\Fset}$  by 
\[
	\rev{i}(f) := if_i+(i+1)f_{i+1} + 2 \sum_{j > i+1} f_j 
\]
and
\[
	\ran{i}(f) := (f_1,f_2,\ldots,f_{i-1},f_{i+2},f_{i+3},\ldots ).
\]
For technical reasons (to be seen later), we extend these definitions to $i=0$ as follows:
\begin{align*}
\rev{0}(f) &:= \rev{1}(f) = f_1 + 2(f_2+f_3+\cdots) \\
\ran{0}(f) &:= \ran{1}(f)=(f_3,f_4,\ldots).
\end{align*}
\end{defn}

We refer to $\rev{i}$ and $\ran{i}$ as   \emph{evaluation} and 
\emph{annihilation} at $i$, respectively.  The following relation is clear from the definitions.
\begin{lem}
\label{lem:valann}
For $f \in \Fset$ and $i \in \N$ we have\  $|\ran{i}(f)| = |f|-\rev{i}(f)$.
\end{lem}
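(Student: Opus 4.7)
The plan is to verify this by direct computation from the definitions. Since $|f|=\sum_{j\geq 1} j f_j$ and $\ran{i}(f)$ is obtained from $f$ by deleting coordinates $i$ and $i+1$ and shifting the remaining coordinates two places to the left, the computation of $|\ran{i}(f)|$ is a finite rearrangement and should match $|f|-\rev{i}(f)$ essentially term by term.

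More concretely, I would first handle the case $i\geq 1$. By definition, the $j$-th coordinate of $\ran{i}(f)$ equals $f_j$ for $j<i$ and equals $f_{j+2}$ for $j\geq i$, so
\[
|\ran{i}(f)| = \sum_{j=1}^{i-1} j f_j + \sum_{j\geq i} j f_{j+2}.
\]
Reindexing the second sum via $k=j+2$ gives $\sum_{k\geq i+2} (k-2) f_k$. Subtracting from $|f|=\sum_{j\geq 1} j f_j$ leaves the contribution of the two deleted positions plus a telescoping adjustment on the tail:
\[
|f|-|\ran{i}(f)| = i f_i + (i+1) f_{i+1} + \sum_{k\geq i+2} \bigl(k-(k-2)\bigr) f_k = i f_i + (i+1) f_{i+1} + 2\sum_{k\geq i+2} f_k,
\]
which is exactly $\rev{i}(f)$.

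For $i=0$, the definitions set $\rev{0}=\rev{1}$ and $\ran{0}=\ran{1}$, so the identity is immediate from the case $i=1$. The whole argument is a one-line bookkeeping check, so I do not anticipate any real obstacle; the only thing to be careful about is correctly aligning the reindexing of the shifted tail of $\ran{i}(f)$ with the original tail of $f$ so that the weight-drop of $2$ per entry beyond position $i+1$ is accounted for.
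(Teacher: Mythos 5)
Your computation is correct and is exactly the routine verification the paper omits (the paper simply states the identity is ``clear from the definitions''). The reindexing of the shifted tail and the reduction of the $i=0$ case to $i=1$ are both handled properly, so there is nothing to add.
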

For example,  $f=(3, 0, 2, 1, 3,0,1)$ gives $|f|=35$, 
$\rev{3}(f)=18$, $\ran{3}(f)=(3,0,3,0,1)$, and
$|\ran{3}(f)|=17$.

\begin{defn}
Let $f \in \Fset$.
We say index $i$ is \emph{maximal} for $f$ if $\rev{i}(f)$ is nonzero and is the maximum possible evaluation of $f$.  That is, $\rev{i}(f) = \max \{ \rev{j}(f) \,:\, j \in \N\} \neq 0$.
\end{defn}

Clearly $f$ admits a maximal index if and only if $f \neq \emptyf$.
The nonzero condition ensures that if $i$ is maximal for $f$ then $|\ran{i}(f)| < |f|$.

With these definitions in hand, Oblak's iterative algorithm can be described as follows:

\begin{defn}
\label{alg:oblak}
The  \emph{Oblak process} takes input $f \in \Fset$ and produces multiset output $[q_1,\ldots,q_k] \in \Pset$ according to the following procedure:
\begin{tabbing}
mm \= mmm\= mm\= mm\= mm\= mm\= mm\= \kill
\> 0 \> {\scshape{input}} $f \in \Fset$ \\
\> 1 \> \> $\obi{f}{0} \gets f$ \\
\> 2 \> \> $k \gets 1$ \\
\> 3 \> \> {\scshape{while} $\obi{f}{k-1} \neq \emptyf$} \\
\> 4 \> \> \>  choose a maximal index $i_k$ for $\obi{f}{k-1}$ \\
\> 5 \> \> \>  $q_{k} \gets \rev{i_k}(\obi{f}{k-1})$  \\
\> 6 \> \> \>  $\obi{f}{k} \gets \ran{i_k}(\obi{f}{k-1})$  \\
\> 7 \> \> \>  $k \gets k+1$  \\
\> 8 \> \> {\scshape{end while}} \\
\> 9 \> {\scshape{output}} $[q_1,\ldots,q_k]$
\end{tabbing}
\end{defn}

The table below shows the Oblak process  applied to input $f=(3,3,2,0,3,1,0,0,2)$, corresponding to partition  $P(f)=[1^3,2^3,3^2,5^3,6,9^2]$.  The output is $[q_1,q_2,q_3,q_4]=[25,17,10,2]$.   
$$
\begin{array}{l|l|l|c|c}
k & \obi{f}{k-1} & (\rev{0/1}, \rev{2}, \rev{3}, \ldots) & \text{ $i_{k}$} & q_{k} \\
\hline 
1 & (3,3,2,0,3,1,0,0,2) & (25,24,18,21,25,10,4,18,18,0,\ldots)
& 5 & 25 \\
2 & (3,3,2,0,0,0,2) & (17,16,10,4,4,14,14,0,\ldots)
& 1 & 17 \\
3 & (2,0,0,0,2) & (6,4,4,10,10,0,\ldots) & 4 & 10 \\
4 & (2) & (2,0,\ldots) & 1 & 2 \\
5 & \emptyf &  &  &  
\end{array}
$$
Alternatively, this same input could be processed by making different choices of $i_1$ and $i_3$:
$$
\begin{array}{l|l|l|c|c}
k & \obi{f}{k-1} & (\rev{0/1}, \rev{2}, \rev{3}, \ldots) & \text{ $i_{k}$} & q_{k} \\
\hline 
1 & (3,3,2,0,3,1,0,0,2) & (25,24,18,21,25,10,4,18,18,0,\ldots)
& 1 & 25 \\
2 & (2,0,3,1,0,0,2) & (14,15,17,8,4,14,14,0,\ldots)
& 3 & 17 \\
3 & (2,0,0,0,2) & (6,4,4,10,10,0,\ldots) & 5 & 10 \\
4 & (2) & (2,0,\ldots) & 1 & 2 \\
5 & \emptyf &  &  &  
\end{array}
$$
Note that these distinct computations yield the same result. It was shown by Khatami that this is always the case.
 
\begin{prop}[Khatami {\cite[Thm. 2.5]{Kha-1}}]
\label{prop:khatami}
The output of the Oblak process is independent of the choices of  maximal indices made at each step.
\end{prop}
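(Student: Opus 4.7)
The plan is to prove, by strong induction on $|f|$, that every execution of the Oblak process on $f$ yields the same multiset output, equal to $\Des{P(f)}$. Since this target depends only on $f$ and not on any choices, independence follows as an immediate corollary. This approach has the advantage of folding Khatami's proposition together with the identification of the Oblak output with $\D(P)$ (which is already $\Des{P}$ by Theorem~\ref{thm:main}), so the paper's main combinatorial machinery does the heavy lifting.

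The base case $f=\emptyf$ is trivial. For the inductive step, fix a maximal index $i$ and write $q:=\rev{i}(f)$ and $g:=\ran{i}(f)$; by Lemma~\ref{lem:valann} we have $|g|<|f|$, so the inductive hypothesis applies to $g$ and tells us that any continuation of the Oblak process on $g$ outputs $\Des{P(g)}$. The proof then reduces to two claims:
\begin{enumerate}
\item[(i)] $\rev{i}(f)$ equals the largest part of $\Des{P(f)}$, for every maximal $i$;
\item[(ii)] $\Des{P(g)}=\Des{P(f)}\setminus\{q\}$, again for every maximal $i$.
\end{enumerate}
Given (i) and (ii), the total Oblak output is $\{q\}\cup \Des{P(g)}=\Des{P(f)}$, independent of the choices made. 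For (i) I would unpack $\rev{i}(f)=if_i+(i+1)f_{i+1}+2\sum_{j>i+1}f_j$ as a weighted count of the entries of $f$ that persist through successive applications of $\pt$, and match it spread-by-spread with the running descent count of $\burge{f}$; then maximizing over $i$ selects precisely the position of the last $\b\a$ in $\burge{f}$, which is the largest part of $\Des{P(f)}$ by definition. For (ii) I would describe explicitly the Burge code of $\ran{i}(f)$ in terms of that of $f$: when $i$ is maximal, excising indices $i,i+1$ of $f$ corresponds to deleting the final $\b\a$ from $\burge{f}$, removing exactly the largest descent.

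The main obstacle is the combinatorial bookkeeping in (ii), since different maximal choices of $i$ yield genuinely different sequences $\ran{i}(f)$ (as in the paper's example, where $\ran{1}(f)=(2,0,3,1,0,0,2)$ and $\ran{5}(f)=(3,3,2,0,0,0,2)$) yet must share the same descent set. An alternative, likely cleaner path would bypass a direct comparison by invoking Lemma~\ref{lem:uniqueness}: fix any deterministic tie-breaking rule (e.g.\ always the smallest maximal index) to define a function $\genop:\Pset\to\Pset$, observe that $|\genop(P)|=|P|$ is immediate from Lemma~\ref{lem:valann}, and do the serious work of establishing $\genop(\pt P)=\genop(P)-1$ by a step-by-step alignment of the Oblak processes on $f$ and $\pt f$. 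The lemma then forces $\genop=\Des$, and (ii) for arbitrary maximal $i$ follows a posteriori by applying the same identification to $\ran{i}(f)$.

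The crux of either route is the interaction between the pair $(\rev{i},\ran{i})$ and the Burge operator $\pt$. I expect a subsidiary lemma of the shape ``$\rev{i}(f)$ and $\rev{i'}(\pt f)$ differ by a controlled amount for a suitable correspondence $i\leftrightarrow i'$'' to be the right technical tool, analogous in spirit to Lemma~\ref{lem:stats}; once in place, it reduces both (i) and (ii) to a finite case analysis on how a spread of $\supp(f)$ containing or adjacent to the maximal index behaves under a single application of $\pt$.
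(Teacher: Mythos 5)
Your second route is, in outline, the one the paper follows, and your instinct that the crux is a lemma controlling how $\rev{i}$ and $\ran{i}$ interact with $\pt$ is exactly right: that lemma is Proposition~\ref{prop-comdiagrams} (for $i$ left admissible in $f$ or right admissible in $\pt f$ one has $\rev{i}(\pt f)=\rev{i}(f)-1$ and $\ran{i}(\pt f)=\pt(\ran{i}(f))$), supplemented by Lemma~\ref{lem:admissible_max} (every maximal index is equivalent to an admissible one) and Proposition~\ref{prop-maxeval} (a left admissible maximal index stays maximal under $\pt$). But be aware that this material \emph{is} the proof; deferring it to ``a finite case analysis'' leaves essentially all of the work undone, and the analysis is delicate precisely because, as the paper's own example shows, a maximal index for $f$ need not remain maximal for $\pt f$ unless it is first replaced by an equivalent admissible one.

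There is also a genuine logical wrinkle in how you set up Route 2. If you define $\mathfrak{Q}$ by a fixed tie-breaking rule, then verifying $\mathfrak{Q}(\pt P)=\mathfrak{Q}(P)-1$ requires comparing the \emph{deterministic} chain for $\pt f$ with the $\pt$-image of the deterministic chain for $f$, and these need not coincide: ``smallest maximal index for $f$'' does not transform into ``smallest maximal index for $\pt f$''. To conclude that the two chains for $\pt f$ give the same valuation you would already need choice-independence for $\pt f$, i.e., the statement being proved. The paper avoids this by fixing no rule at all: it works with arbitrary Oblak chains, proves (Theorem~\ref{thm:oblakburge}) that $\pt$ sends any Oblak chain for $f$ to an Oblak chain for $\pt f$ with valuation shifted down by $1$, and then proves choice-independence by induction on $|f|$ along the Burge chain (Corollary~\ref{cor:khatami2}): two chains for $f$ push down to two chains for $\pt f$, these agree by induction, and the common valuation for $f$ is recovered from its reduction together with $|f|$. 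Your Route 1 has a different problem: claims (i) and (ii) are each substantial theorems --- (i) is Oblak's identification of the largest part of $\D(P)$, and (ii) posits a direct relationship between $\burge{\ran{i}(f)}$ and $\burge{f}$ that the paper never establishes and that the ``delete the final $\b\a$'' description does not capture (the Burge code is defined through iterated $\pt$, not through excision of entries, and the sequences $\ran{i}(f)$ for distinct maximal $i$ are genuinely different). So Route 1 as written assumes the conclusion in disguise, and Route 2 needs the restructuring above before the inductive engine can run.
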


This invariance permits the following definition.

\begin{defn}
For $f \in \Fset$, let $\oblak(f)$ denote the output of the Oblak process when applied to input $f$.  
\end{defn}
 
Let $q_1,\ldots,q_k$ and $\obi{f}{0},\ldots,\obi{f}{k}$ be as defined within the process.  Then Lemma~\ref{lem:valann} gives $|\obi{f}{j+1}|=|\obi{f}{j}|-q_{j+1}$ for all $j$ and thus $\sum_{i=1}^k q_i = |\obi{f}{0}|-|\obi{f}{k}|=|\obi{f}{0}|=|P|$.  It is also not difficult to see inductively that successive $q_i$ always differ by at least 2
\cite[Prop. 2.7]{Kha-1}.  Therefore $P \mapsto \oblak(f(P))$ is a size-preserving function from $\Pset$ to $\RRset$.  Oblak conjectured that this function is precisely $\D$, and the conjecture was settled more than a decade later by Basili.

\begin{thm}[Basili~\cite{Bas-I}]
\label{thm:basili}
$\D(P)=\oblak(f(P))$ for all $P \in \Pset$.
\end{thm}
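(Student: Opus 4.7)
The plan is to reduce Basili's theorem to Lemma~\ref{lem:uniqueness}, using the identification $\D = \Des$ from Theorem~\ref{thm:main}. Setting $\genop(P) := \oblak(f(P))$, the paragraph preceding the theorem already observes that $\genop$ is size-preserving and takes values in $\RRset$, so hypothesis~(1) of Lemma~\ref{lem:uniqueness} holds. It remains only to verify hypothesis~(2), namely the recursion
\begin{equation}
\label{eq:oblakrecursion}
\oblak(\pt f) = \oblak(f) - 1 \qquad \text{for every } f \in \Fset.
\end{equation}
Once~\eqref{eq:oblakrecursion} is established, Lemma~\ref{lem:uniqueness} gives $\oblak(f(P)) = \Des{P} = \D(P)$ for all $P$.

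The proof of~\eqref{eq:oblakrecursion} proceeds by induction on $|f|$, the base case $f = \emptyf$ being trivial. For the inductive step, fix any maximal index $i_1$ for $f$, put $q_1 = \rev{i_1}(f)$, and let $g = \ran{i_1}(f)$. Since $|g| = |f| - q_1 < |f|$, the inductive hypothesis gives $\oblak(\pt g) = \oblak(g) - 1$. The inductive step therefore reduces to producing an index $i_1'$ that is maximal for $\pt f$ and satisfies
\[
\rev{i_1'}(\pt f) = q_1 - 1 \qquad \text{and} \qquad \ran{i_1'}(\pt f) = \pt g,
\]
because two applications of Khatami's invariance (Proposition~\ref{prop:khatami}) then yield $\oblak(\pt f) = (q_1 - 1,\, \oblak(\pt g)) = (q_1 - 1,\, \oblak(g) - 1) = \oblak(f) - 1$.

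The construction and verification of $i_1'$ is the hardest step and is the entire combinatorial content of the argument. I would proceed by case analysis on the position of $i_1$ relative to the spread structure of $f$. The telescoping identity $\rev{i}(f) - \rev{i+1}(f) = i(f_i - f_{i+2})$, read off directly from the definition of $\rev{i}$, pins down the set of maximal indices as a neighborhood of the endpoints of certain spreads of $f$. The explicit recipe for $\pt$ in \S\ref{sec:burge} then tracks how these endpoints migrate under $\pt$: odd spreads contract from both sides, even spreads shift one step to the left, and the convention $f_0 = 0$ introduces the distinguished degenerate demotion $f_1 \mapsto f_1 - 1$ whenever $1$ lies in an odd spread of $f$. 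I expect the correct candidate to be $i_1' \in \{i_1, i_1 - 1\}$ in every case, the shift being dictated by whether this degenerate demotion occurs in the spread of $f$ containing $i_1$. Once the cases are laid out, checking $\rev{i_1'}(\pt f) = q_1 - 1$ reduces to a numerical comparison of the evaluation formulas for $f$ and $\pt f$, while the identity $\ran{i_1'}(\pt f) = \pt(\ran{i_1}(f))$ amounts to the observation that $\pt$ treats distinct spreads independently, so excising the consecutive entries $(i_1, i_1 + 1)$ of $f$ before or after applying $\pt$ yields the same result.
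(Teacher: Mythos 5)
Your high-level reduction is exactly the paper's: establish $\oblak(\pt f)=\oblak(f)-1$ and invoke Lemma~\ref{lem:uniqueness} together with Theorem~\ref{thm:main}. (Your use of Proposition~\ref{prop:khatami} as an external input is legitimate --- it is needed anyway for $\oblak$ to be well defined --- though the paper instead works with whole Oblak chains in Theorem~\ref{thm:oblakburge} and \emph{derives} Khatami's invariance as a corollary.) The problem is that the step you correctly identify as ``the entire combinatorial content'' contains two genuine gaps. First, your justification for $\ran{i_1'}(\pt f)=\pt(\ran{i_1}(f))$ --- that ``$\pt$ treats distinct spreads independently, so excising the consecutive entries before or after applying $\pt$ yields the same result'' --- would prove that the diagram commutes for \emph{every} index, and that is false: for $f=(3,0,2,1,0,1,2,1)$ one has $\pt(\ran{8}(f))=(2,0,3,0,0,2,1)$ but $\ran{8}(\pt f)=(2,0,3,0,1,0,3)$. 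The reason is that $\ran{i}$ deletes positions $i,i+1$ and shifts the tail left by two, which can merge the spread to the left of $i$ with the one to the right of $i+1$ and thereby change which pairs get demoted by $\pt$. The commutation holds only under an admissibility hypothesis on $i$ (Proposition~\ref{prop-comdiagrams}), and the content of Lemma~\ref{lem:admissible_max} is precisely that every maximal index is \emph{equivalent} to an admissible one; this is the idea your sketch is missing, and your proposed criterion (whether the degenerate demotion at position $1$ occurs in the spread containing $i_1$) is not the right one --- the relevant dichotomy is whether $\{i_1\}$ is a trivial spread, in which case one passes to $i_1-1$.

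Second, you assert without argument that $i_1'$ is \emph{maximal} for $\pt f$. Knowing $\rev{i_1'}(\pt f)=q_1-1$ gives only a lower bound on $\max_j\rev{j}(\pt f)$; you must also rule out some $j$ with $\rev{j}(\pt f)>q_1-1$. The paper's Proposition~\ref{prop-maxeval} does this by applying the admissibility machinery in the reverse direction: the maximum of $\rev{\cdot}(\pt f)$ is attained at an index $j$ that is right admissible \emph{in $\pt f$}, and for such $j$ one again has $\rev{j}(\pt f)=\rev{j}(f)-1\le m_f-1$. Without some such two-sided argument your induction does not close. (There is also the small edge case $f=(1)$, where $\pt f=\emptyf$ has no maximal index and the part $q_1-1=0$ must be discarded rather than recorded; the paper handles this by truncating the chain.)
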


We will now embark on an independent proof of  Theorem~\ref{thm:basili} that relies on the interaction between the Oblak process and  Burge encoding. Proposition~\ref{prop:khatami} will be deduced along the way.

\subsection{Oblak meets Burge}

As a motivating example, consider the following tables which show the Oblak process being applied to both $f = (3,0,1,1,0,0,0,1) \in \Fset$ and  its image $\pt f = (2,0,2,0,0,0,1)$ under the Burge operator $\pt$. 
\begin{align*}
&\begin{array}{lllcc}
k\ \ & \obi{f}{k-1} & (\rev{0/1}, \rev{2}, \rev{3}, \ldots) & \text{ $i_k$} & q_{k} \\
\hline 
1 & (3,0,1,1,0,0,0,1) & (9,7,9,6,2,2,8,8,0,\ldots)
& 0 & 9 \\
2 & (1,1,0,0,0,1) & (5,4,2,2,6,6,0,\ldots)
& 5 & 6 \\
3 & (1,1) & (3,2,0,\ldots) & 1 & 3 \\
4 & \emptyf &  &  &  \\
 &  &  &  &  \\
k\ \  & \obi{(\pt f)}{k-1} & (\rev{0/1}, \rev{2}, \rev{3}, \ldots) & \text{ $i_k$} & q_{k} \\
\hline 
1 & (2,0,2,0,0,0,1) & (8,8,8,2,2,7,7,0,\ldots)
& 0 & 8 \\
2 & (2,0,0,0,1) & (4,2,2,5,5,0,\ldots)
& 5 & 5 \\
3 & (2) & (2,0,\ldots) & 1 & 2 \\
4 & \emptyf &  &  & 
\end{array}
\end{align*}
Notice that the outputs $\oblak(f)=[9,6,3]$ and $\oblak(\pt f) = [8,5,2]$ satisfy 
\begin{equation*}
\label{eq:oblak_des}
\oblak(\pt f)=\oblak(f)-1.
\end{equation*}
Our aim is to prove that this relation holds true for every $f \in \Fset$. For then Lemma~\ref{lem:uniqueness} and Theorem~\ref{thm:main} would give $\oblak(f(P))=\Des{P}=\D(P)$, thus proving  Theorem~\ref{thm:basili}. 

The key lies in a more detailed analysis of the above example.   In particular, we note that the maximal indices $(i_1,i_2,i_3)=(0,5,1)$ chosen when processing $f$ remain valid maximal choices when processing $\pt f$. Moreover for each $k$ we have 
\begin{equation}
\label{eq:minus}
	\rev{i_k}(\obi{(\pt f)}{k-1})=\rev{i_k}(\obi{f}{k-1})-1
\end{equation}
and 
\begin{equation}
\label{eq:commute}
	 \obi{(\pt f)}{k-1} = \pt( \obi{f}{k-1} ).
\end{equation}
The situation is neatly summarized by the following commutative diagram, in which $\obi{f}{0},\ldots,\obi{f}{3}$ appear along the top row and $\obi{(\pt f)}{0},\ldots,\obi{(\pt f)}{3}$ along the bottom.
\[ \begin{tikzcd}
(3, 0, 1, 1, 0, 0, 0, 1) \arrow{r}{\ran{1}}\arrow[swap]{r}{\rev{1}=9} \arrow[swap]{d}{\pt}
 & (1, 1, 0, 0, 0, 1) \arrow{r}{\ran{5}} \arrow[swap]{r}{\rev{5}=6}\arrow[swap]{d}{\pt} 
 & (1, 1)\arrow{r}{\ran{1}} \arrow[swap]{r}{\rev{1}=3}\arrow[swap]{d}{\pt}  
 &  \ \emptyf  \arrow[swap]{d}{\pt}  \\
(2, 0, 2, 0, 0, 0, 1) \arrow{r}{\ran{1}}\arrow[swap]{r}{\rev{1}=8}
& (2, 0, 0, 0, 1) \arrow{r}{\ran{5}} \arrow[swap]{r}{\rev{5}=5}
& (2)  \arrow{r}{\ran{1}} \arrow[swap]{r}{\rev{1}=2} 
& \ \emptyf
\end{tikzcd}
\]
We could instead choose maximal indices $(i_1,i_2,i_3)=(3,5,1)$ when computing $\oblak(f)$.  These, too, remain maximal in the computation of $\oblak(\pt f)$, and both~\eqref{eq:minus} and~\eqref{eq:commute} continue to hold. This is captured by the diagram below. 
\[ \begin{tikzcd}
(3, 0, 1, 1, 0, 0, 0, 1) \arrow{r}{\ran{3}}\arrow[swap]{r}{\rev{3}=9} \arrow[swap]{d}{\pt} & (3, 0, 0, 0, 0, 1) \arrow{r}{\ran{5}} \arrow[swap]{r}{\rev{5}=6}\arrow[swap]{d}{\pt} & (3)\arrow{r}{\ran{1}} \arrow[swap]{r}{\rev{1}=3}\arrow[swap]{d}{\pt}  & \ \emptyf \arrow[swap]{d}{\pt}  \\
(2, 0, 2, 0, 0, 0, 1) \arrow{r}{\ran{3}}\arrow[swap]{r}{\rev{3}=8}& (2, 0, 0, 0, 1) \arrow{r}{\ran{5}} 
\arrow[swap]{r}{\rev{5}=5}& (2)  \arrow{r}{\ran{1}} \arrow[swap]{r}{\rev{1}=2}& \ \emptyf
\end{tikzcd}
\]
While these observations are suggestive of a general principal, some caution is required. The reader should confirm that $(i_1,i_2,i_3)=(1,6,1)$ is a maximal index sequence for $f$, but \emph{not} for $\pt f$.  

Based on the foregoing discussion we turn our attention to the following questions: 
\begin{itemize}
\item[(a)] If $i$ is maximal for $f \in \Fset$, under what conditions is $i$ maximal for $\pt f$?

\item[(b)]	For which $i$ do we have
$\rev{i}(\pt f) = \rev{i}(f)-1$ and $\ran{i}(\pt f) = \pt(\ran{i}(f))$? 
\end{itemize}
A thorough understanding of second question will allow us to address the first, so we begin with (b).  Our answer will be framed in terms of the following notions.

\begin{defn}
Let $f \in \Fset$.  We say $i \geq 1$ is \emph{right admissible} in $f$  if $f_i > 0$ and either $f_{i+1} > 0$ or $f_{i-1}=f_{i+1}=0$.
We say $i \geq 0$ is \emph{left admissible} in $f$ if $f_{i+1} > 0$ and either $f_{i} > 0$ or $f_{i}=f_{i+2}=0$.
\end{defn}

In other words,  $i$ is right admissible if $f_i \neq 0$ and $i$ is not the right end of a nontrivial spread, and $i$ is left admissible if $f_{i+1} \neq 0$ and $i+1$ is not the left end of a nontrivial spread. 
For example, the admissible indices for two elements of $\Fset$ are shown below. 
$$
\begin{array}{c|c|c}
f & \text{right} & \text{left} \\
\hline
(3,0,2,0,0,1,2,1,0,0,1) & 1, 3, 6, 7, 11 & 0, 2, 6, 7, 10 \\
(1, 2, 1, 0, 2, 1, 0, 1, 0, 2) & 1, 2, 5, 8, 10 & 1, 2, 5, 7, 9
\end{array}
$$

At first glance it seems that there is an asymmetry between right and left admissibility. However, this is merely a result of notational convention.  Both notions really concern the pair $(f_i, f_{i+1})$, which we choose to identify with index $i$. 

In fact  the only distinction in left vs. right admissibility lies in the treatment of trivial spreads. If $f$ has no trivial spreads, then the sets of left and right admissible indices coincide, whereas if $\{i\}$ is a trivial spread then $i$ is right admissible (but not left) and $i-1$ is left admissible (but not right).  In particular, note that $0$ is never right admissible and is left  admissible if and only if $f_1>0$ and $f_2=0$ (i.e., $\{1\}$ is a trivial spread).

\begin{prop}\label{prop-comdiagrams}
Let $f \in \Fset$, $i \geq 0$. If $i$ is either left admissible for $f$ or right admissible for $\pt f$, then  $\rev{i}(\pt f)=\rev{i}(f)-1$ and the following diagram commutes:
\begin{equation}
 \begin{tikzcd}
f \arrow{r}{\ran{i}} \arrow[swap]{d}{\pt} & g \arrow{d}{\pt} \\%
\pt f \arrow{r}{\ran{i}}& \pt g
\end{tikzcd}. \label{eq:comdiagram}\tag{$*$}
\end{equation}
\end{prop}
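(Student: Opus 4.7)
The plan is to reduce the evaluation identity $\rev{i}(\pt f) = \rev{i}(f) - 1$ to the commutative diagram via the size relations, and then prove the diagram by a position-by-position analysis using the local formula $(\pt h)_j = h_j + [j+1 \in \Ir(h)] - [j \in \Ir(h)]$ (unambiguous because consecutive integers never both lie in $\Ir(h)$).

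If \eqref{eq:comdiagram} commutes, then equating sizes via Lemma~\ref{lem:valann} and Lemma~\ref{lem:stats}(2) yields
\[
|f| - \two{f} - \rev{i}(\pt f) \;=\; |\ran{i}(\pt f)| \;=\; |\pt(\ran{i}(f))| \;=\; |f| - \rev{i}(f) - \two{\ran{i}(f)},
\]
so the evaluation identity is equivalent to $\two{\ran{i}(f)} = \two{f} - 1$. This is straightforward from the spread formula $\two{h} = \sum_{[a,b]} \lceil (b - a + 1)/2 \rceil$: in case (LA1) the spread containing $\{i, i+1\}$ shrinks by two interior positions, dropping its contribution by exactly one; in case (LA2) the trivial spread $\{i+1\}$ is erased outright; and the right-admissibility cases give the same net drop after a short parity check.

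For the diagram itself, write $g = \ran{i}(f)$. Since $\ran{i}$ leaves the spreads of $f$ in $[1, i-1]$ untouched and rigidly shifts the spreads in $[i+2, \infty)$ two to the left, the values $(\pt g)_j$ and $(\ran{i}\pt f)_j$ agree automatically whenever $j$ lies away from the four boundary positions $\{i-1, i, i+1, i+2\}$ and no merger occurs across the deletion site. The admissibility hypothesis is exactly what rules out bad mergers and controls the boundary. Under (LA1), the spread $[a,b]$ containing $\{i, i+1\}$ becomes the spread $[a, b-2]$ of $g$; since $b$ and $b-2$ share parity, $\Ir$-membership at positions left of $i$ is preserved and right of $i$ matches after a two-shift, so a direct four-point check closes the case. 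Under (LA2), the trivial spread $\{i+1\}$ forces $(\pt f)_i = 1$ and $(\pt f)_{i+1} = 0$, which are precisely the values that $\ran{i}$ deletes, and $f_i = f_{i+2} = 0$ rules out any merger in $g$, so elsewhere the comparison is position-by-position.

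The right-admissibility cases (RA1) and (RA2) are handled by the mirror argument, applying the same spread analysis to $\pt f$ in place of $f$. I expect (RA1) to be the main technical obstacle, since the hypothesis $(\pt f)_i, (\pt f)_{i+1} > 0$ does \emph{not} directly give $f_i, f_{i+1} > 0$; instead one must unwind the recurrence $(\pt f)_j = f_j + [j+1 \in \Ir(f)] - [j \in \Ir(f)]$ and rule out the short list of $\Ir(f)$-configurations near $i$ that would destroy the parity matching between $\Ir(f)$ and $\Ir(g)$.
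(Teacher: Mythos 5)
Your strategy is genuinely different from the paper's in one attractive way: you derive the evaluation identity $\rev{i}(\pt f)=\rev{i}(f)-1$ \emph{from} the commutativity of \eqref{eq:comdiagram} by equating $|\ran{i}(\pt f)|$ with $|\pt(\ran{i}(f))|$ via Lemma~\ref{lem:valann} and Lemma~\ref{lem:stats}(2), reducing it to the clean combinatorial statement $\two{\ran{i}(f)}=\two{f}-1$. That reduction is correct, and the spread bookkeeping you give for the left-admissible cases (contraction $[a,b]\to[a,b-2]$ drops the contribution by one; a trivial spread is erased with no merger possible on either side) does establish the 2-measure drop there. The paper instead computes $\rev{i}(\pt f)$ and $\ran{i}(\pt f)$ directly in each case from the local form of $\pt f$ near positions $i-1,\dots,i+2$, funnelling the generic case through Lemma~\ref{lem-comdiagrams} ($i+1\in\Ir(f)$). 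Your route would, if completed, package the positional analysis more conceptually through the parity-preservation of $\Ir$ under the shift, at the cost of a careful merger/boundary discussion.

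However, as written there is a genuine gap, and you have located it yourself: the entire right-admissibility half. Both of your reductions --- the 2-measure drop $\two{\ran{i}(f)}=\two{f}-1$ and the four-point boundary check for the diagram --- are stated in terms of the spreads of $f$, whereas the hypothesis in this half is about $\pt f$. Translating ``$i$ is right admissible in $\pt f$'' back into constraints on $\Ir(f)$ is precisely where the content of the proposition lives; in the paper's proof this translation yields, e.g., that for $i>1$ with $i+1\notin\Ir(f)$ one must have both $i\in\Ir(f)$ and $i+2\in\Ir(f)$, and that $i=1$ splits further according to whether $(\pt f)_2>0$, with $3\in\Ir(f)$ or $(f_3,f_4,\dots)\in\AP$ in the respective subcases. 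Your proposal announces that one ``must unwind the recurrence and rule out the short list of $\Ir(f)$-configurations'' but does not do so, and the left-admissible boundary checks are likewise asserted (``a direct four-point check closes the case'') rather than performed. The plan is sound and I see no step that would fail, but these verifications are the proof, not routine afterthoughts, so the argument is incomplete until they are carried out.
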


\begin{exmp}
Let $f=(3, 0, 2, 1, 0, 1, 2, 1)$, which has
$\pt f = (2, 0, 3, 0, 1, 0, 3)$. Then $\{0, 3, 6, 7\}$ is the set of left admissible indices in $f$, and $\{1, 3, 5, 7\}$ is
the set of right admissible indices in $\pt f$.  The proposition  guarantees that $\rev{i}(\pt f)=\rev{i}(f)-1$ and diagram~\eqref{eq:comdiagram}
commutes whenever $i\in \{0,1,3,5,6,7\}$. 

For example, at $i=1$ we have $\rev{1}(f)=17$, $\rev{1}(\pt f)=16$, $g=\ran{1}(f)=(2, 1, 0, 1, 2, 1)$ and
$\pt g = (3, 0, 1, 0, 3) = \ran{1}(\pt f)$.
It turns out that~\eqref{eq:comdiagram} also commutes when $i=4$, but it fails to commute when $i\in\{2,8\}$.  For instance, when $i=8$, we get
\begin{align*}
 \pt(\ran{8}(f))&=\pt(3, 0, 2, 1, 0, 1, 2) =(2, 0, 3, 0, 0, 2, 1),
\end{align*}
as compared to  $\ran{8}(\pt f) = (2, 0, 3, 0, 1, 0, 3)$.
\end{exmp}

The proof of Proposition~\ref{prop-comdiagrams} is  elementary but technical, involving the same essential argument  applied to various special cases.  The core idea is contained in the proof of the following Lemma.

\begin{lem}
\label{lem-comdiagrams}
Let $f \in \Fset$ and suppose $i \geq 1$ and $i+1 \in \Ir(f)$.  Then $\rev{i}(\pt f)=\rev{i}(f)-1$ and $\ran{i}(\pt f)=\pt(\ran{i}(f))$.
\end{lem}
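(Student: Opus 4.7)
The plan is to first isolate the critical parity observation that makes the hypothesis $i+1\in\Ir(f)$ powerful: under this assumption, both $i\notin\Ir(f)$ and $i+2\notin\Ir(f)$. Indeed, the elements of $\Ir(f)$ contributed by a single spread $[a,b]$ all share the parity of $b$ and consecutive ones differ by exactly $2$. Thus if $i+1\in\Ir(f)$ comes from the spread $[a,b]$ (so $b\equiv i+1\pmod{2}$), then $i$ and $i+2$ have the wrong parity to belong to $\Ir(f)$ from $[a,b]$. Since distinct spreads of $f$ are separated by a zero entry and both $i$ and $i+2$ are adjacent to points of $[a,b]$, neither can lie in another spread either.

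Given this, the identity $\rev{i}(\pt f)=\rev{i}(f)-1$ is a short calculation. The description of $\pt$ yields $\Delta_k:=(\pt f)_k-f_k=[k+1\in\Ir(f)]-[k\in\Ir(f)]$ for every $k\ge 1$, so
\[
\rev{i}(\pt f)-\rev{i}(f)=i\,\Delta_i+(i+1)\,\Delta_{i+1}+2\sum_{j>i+1}\Delta_j.
\]
The tail sum telescopes to $-[i+2\in\Ir(f)]=0$; the hypothesis combined with the parity fact gives $\Delta_i=1$ and $\Delta_{i+1}=-1$; the total is $i-(i+1)=-1$, as required.

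For the commutation $\ran{i}(\pt f)=\pt(\ran{i}(f))$, the conceptual content is that $\pt$ acts independently on the backward pairs $(f_{j-1},f_j)$ indexed by $j\in\Ir(f)$, and the parity observation guarantees that no such pair straddles the two positions $i,i+1$ removed by $\ran{i}$ except the pair at $j=i+1$ itself --- whose demotion is discarded by $\ran{i}$ anyway. I would formalize this by first establishing
\[
\Ir(\ran{i}(f))=\bigl(\Ir(f)\cap\{1,\ldots,i-1\}\bigr)\cup\{k-2:k\in\Ir(f),\ k\ge i+2\}
\]
and then verifying $\pt(\ran{i}(f))=\ran{i}(\pt f)$ position by position via the $\Delta$-formula: for $k\le i-1$ the two sides agree because the relevant $\Ir$-memberships are unchanged, while for $k\ge i$ the shift by $2$ exactly compensates for the removal.

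The main obstacle is the $\Ir(\ran{i}(f))$ identity itself, which requires a case analysis on the spread $[a,b]$ of $f$ containing $i+1$: one distinguishes whether $a<i+1$ or $a=i+1$, and whether $b=i+1$ or $b\ge i+3$ (with $b=i+2$ ruled out by parity); in the subcase $a=i+1,\ b\ge i+3$ one further splits on whether $f_{i-1}=0$ or $f_{i-1}>0$, the latter causing $\ran{i}$ to \emph{merge} the spread $[a,b]$ with a neighboring spread of $f$ ending at $i-1$ into a single spread of $\ran{i}(f)$. In every configuration the parity constraint $b\equiv i+1\pmod{2}$ aligns the pieces so that the new (possibly merged) spreads of $\ran{i}(f)$ contribute exactly the naive shift-and-glue of the original $\Ir(f)$-contributions, which is all the identity claims.
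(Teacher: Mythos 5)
Your proposal is correct, and every intermediate claim you state is true: the parity fact that $i+1\in\Ir(f)$ forces $i,i+2\notin\Ir(f)$, the indicator formula $(\pt f)_k-f_k=[k+1\in\Ir(f)]-[k\in\Ir(f)]$ with its telescoping consequence for $\rev{i}$, and the transport identity for $\Ir(\ran{i}(f))$, which does hold in all configurations including the merging one. The paper's argument rests on the same parity observation but packages it differently: from $i+2\notin\Ir(f)$ it deduces that the suffix $g=(f_{i+2},f_{i+3},\ldots)$ lies in $\AP$, writes $\pt f=((\pt f)_1,\ldots,(\pt f)_{i-1},f_i+1,f_{i+1}-1,\pt g)$, obtains $\rev{i}(\pt f)=\rev{i}(f)-1$ from $\len{\pt g}=\len{g}$ (Lemma~\ref{lem:stats}(1)), and then reads off $\ran{i}(\pt f)=\pt(\ran{i}(f))$ by deleting positions $i$ and $i+1$. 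What your route buys is precision on a point the paper leaves implicit: the paper's final equality $((\pt f)_1,\ldots,(\pt f)_{i-1},\pt g)=\pt(f_1,\ldots,f_{i-1},g)$ tacitly requires $\Ir(f)\cap\{1,\ldots,i-1\}=\Ir(\ran{i}(f))\cap\{1,\ldots,i-1\}$, even though deleting positions $i,i+1$ may splice a spread of $f$ ending at $i-1$ onto one beginning at $i+1$; your explicit description of $\Ir(\ran{i}(f))$, with the observation that the right end of the (possibly merged) spread keeps the parity of $i-1$, is exactly the missing justification. The cost is length: the suffix factorization disposes of both claims in a few lines, whereas your version requires carrying out the four-way case analysis you outline. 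Either write-up is acceptable; if you go with yours, the spread case analysis must actually be written out, since it is the only place where the hypothesis $i+1\in\Ir(f)$ is genuinely used for the second identity.
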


\begin{proof}
Let $g = (f_{i+2},f_{i+3},\ldots)$.  Since $i+1 \in \Ir(f)$ we have $i+2 \not \in \Ir(f)$, or equivalently $g \in \AP$.  Therefore 
$$
	\pt f = ((\pt f)_1, \ldots, (\pt f)_{i-1},f_i+1,f_{i+1}-1, \pt g)
$$ 
and
\begin{align*}
\rev{i}(\pt f)
&=i(f_{i}+1)+(i+1)(f_{i+1}-1)+2\len{\pt g} \\
&=-1 + if_i +(i+1)f_{i+1} + 2\len{g} \\
&=\rev{i}(f)-1.
\end{align*}
Finally, $\ran{i}(\pt f)=
 ((\pt f)_1, \ldots, (\pt f)_{i-1}, \pt g) =
\pt(\ran{i}(f)).$
\end{proof}

\begin{proof}[Proof of Proposition~\ref{prop-comdiagrams}.] Let $i$ be right admissible in $\pt f$. 
If $i+1\in\Ir(f)$ then the conclusion follows from Lemma~\ref{lem-comdiagrams}, so we suppose that $i+1\not\in\Ir(f)$.  
\begin{itemize}
\item We first deal with the case  $i>1$.  Then we must have that both $i$ and $i+2$ belong to $\Ir(f)$.  Hence
\begin{eqnarray*}
\ran{i}(\pt f)&=& \ran{i}(\ldots, f_{i-1}+1, f_{i}-1, f_{i+1}+1, f_{i+2}-1,\ldots)\\
&=& (\ldots, f_{i-1}+1, f_{i+2}-1,\ldots) \\
&=& \pt(\ran{i}(f)),\\
\rev{i}(\pt f)&=& (f_i-1)i +(f_{i+1}+1)(i+1)+2\left((f_{i+2}-1)+\sum_{k>i+2} f_k\right)\\
&=& -1+ f_i i + f_{i+1}(i+1) +2\sum_{k>i+1} f_k \\
&=& \rev{i}(f)-1. 
\end{eqnarray*}
\item Now suppose that $i=1$.  We treat cases $(\pt f)_2>0$ and $(\pt f)_2=0$ separately.
\begin{itemize}
\item Assume $(\pt f)_2>0$. Since $2\not\in\Ir(f)$, this means that $3\in\Ir(f)$ and $(f_4,\ldots)\in\mathcal{A}$. Hence
\begin{eqnarray*}
\ran{i}(\pt f) &=& \ran{1}(f_1-1,f_2+1, f_3-1,\pt(f_4,\ldots))\\
&=& (f_3-1,\pt(f_4,\ldots))\\ 
&=& \pt(f_3, f_4,\ldots) = \pt(\ran{i} f),\\ 
\rev{i}(\pt f) &=& (f_1-1)(1)+(f_2+1)(2)+2\left((f_3-1)+\sum_{k>3} f_k\right)\\
&=& -1+ f_1 + 2f_2+2\sum_{k>2} f_k \\
&=& \rev{i}(\pt f)-1. 
\end{eqnarray*}
\item Assume now that $(\pt f)_2=0$.  Since $2\not\in\Ir(f)$ this implies that $f_2=0$.  Therefore we must have that $3\not\in\Ir(f)$ and $(f_3,\ldots)\in\mathcal{A}$, hence
\begin{eqnarray*}
\ran{i}(\pt f) &=& \ran{1}(f_1-1,0,\pt(f_3,\ldots))\\
&=& \pt(f_3,\ldots) = \pt(\ran{i} f),\\
\rev{i}(\pt f) &=& f_1-1+2\sum_{k>2} f_k = \rev{i}(f)-1.
\end{eqnarray*}
\end{itemize}
\end{itemize}

Now let $i$ be left admissible in $f$.
\begin{itemize}
\item We first deal with the case $i\ge 1$.  If $i+1\in\Ir(f)$, then the conclusion again comes from Lemma~\ref{lem-comdiagrams}, so we suppose $i+1\not\in\Ir(f)$. Since $i$ is left admissible in $f$, this means $f_{i}>0$, and both
$i,i+2\in\Ir(f)$. Hence
\begin{eqnarray*}
\ran{i}(\pt f)&=& \ran{i}(\ldots, f_{i-1}+1, f_{i}-1, f_{i+1}+1, f_{i+2}-1,\ldots)\\
&=& (\ldots, f_{i-1}+1, f_{i+2}-1,\ldots) = \pt(\ran{i}(f)),\mbox{ and }\\
\rev{i}(\pt f)&=& (f_{i}-1)i +(f_{i+1}+1)(i+1)+2\left((f_{i+2}-1)+\sum_{k>i+2} f_k\right)\\
&=& -1+ f_{i}i + f_{i+1}(i+1) +2\sum_{k>i+1} f_k = \rev{i}(f)-1. 
\end{eqnarray*}
\item We now analyse the case $i=0$.  In this case we must have that $f_1>0$ and $f_2=0$.  We analyse 
the cases $3\in\Ir(f)$ (i.e., $(f_3,\ldots)\in\mathcal{B}$) and $3\not\in\Ir(f)$ separately.
\begin{itemize}
\item Assume that $3\in\Ir(f)$.  Then
\begin{eqnarray*}
\lan{1}(\pt f) &=& (f_1-1,1,f_3-1,\pt(f_4,\ldots)) \\
&=& (f_3-1,\pt(f_4,\ldots)) = \pt(f_3,\ldots)= \pt(\lan{1} f),\\
\lev{1}(\pt f) &=& (f_1-1)+2+2\left((f_3-1)+\sum_{k>3} f_k\right)\\
&=& \lev{1}(f)-1.
\end{eqnarray*}
\item Assume that $3\not\in\Ir(f)$.  Then $(f_3,\ldots)\in\mathcal{A}$ and hence
\begin{eqnarray*}
\lan{1}(\pt f) &=& (f_1-1,0,\pt(f_3,\ldots)) \\
&=& \pt(f_3,\ldots) = \pt(\lan{1} f),\\
\lev{1}(\pt f) &=& (f_1-1)+2\sum_{k>2} f_k = \lev{1}(f)-1.
\end{eqnarray*}
\end{itemize}
\end{itemize}
\end{proof}

We now wish to extend Proposition~\ref{prop-comdiagrams} to address when  maximality of the index $i$ in $f$ ensures maximality of $i$ in $\pt f$.  This requires a somewhat roundabout approach.

\begin{defn}
\label{defn:equivalence}
Fix $f \in \Fset$.  We say indices $i,j  \in \N$ are \emph{equivalent}  with respect to $f$ if $\ran{i}(f)=\ran{j}(f)$ and $\rev{i}(f)=\rev{j}(f)$.
\end{defn}

For example, if $f=(3, 0, 2, 0, 0, 1, 1, 1, 0, 1)$,  then
the index set $\N$ is partitioned into equivalence classes $\{0,1\}$, $\{2,3\}$, $\{4\}$, $\{5\}$, $\{6,7\}$, $\{8,9,10\}$, and $\{11,12,\ldots\}$. We remark that the condition $\rev{i}(f)=\rev{j}(f)$ in Definition~\ref{defn:equivalence} is actually redundant as Lemma~\ref{lem:valann} shows it to follow from $\ran{i}(f)=\ran{j}(f)$.  
It is included only to emphasize the equality of both annihilations and evaluations at equivalent indices.

\begin{lem}
\label{lem:admissible_max}
Let $f \in \Fset$ with $f \neq \emptyf$.  If $i$ is maximal for $f$ then it is equivalent to some right admissible index $j$ and  also equivalent to some left admissible index $k$.
\end{lem}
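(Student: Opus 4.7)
The plan is to exploit the difference formula $\rev{i+1}(f) - \rev{i}(f) = i(f_{i+2} - f_i)$, which holds for every $i \geq 1$ by a direct computation from the definition of $\rev{i}$ and is consistent at the boundary with the convention $\rev{0} = \rev{1}$. Together with its symmetric counterpart $\rev{i}(f) - \rev{i-1}(f) = (i-1)(f_{i+1} - f_{i-1})$ for $i \geq 2$, maximality of $i$ forces $f_{i+2} \leq f_i$ and $f_{i+1} \geq f_{i-1}$ whenever those indices are in range.

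From these two inequalities I would extract two structural exclusions for a maximal index $i$. First, if $i \geq 2$ with $f_i > 0$, $f_{i+1} = 0$, and $f_{i-1} > 0$, then $\rev{i-1}(f) - \rev{i}(f) = (i-1) f_{i-1} > 0$, contradicting maximality; so a maximal index cannot be the right end of a nontrivial spread. Second, if $i \geq 1$ and $f_i = f_{i+1} = 0$, let $m > i+1$ be the smallest index with $f_m > 0$ (which exists since $\rev{i}(f) > 0$); a short computation yields $\rev{m}(f) - \rev{i}(f) = (m-2) f_m + (m-1) f_{m+1}$, strictly positive because $m \geq 3$, again contradicting maximality; so a maximal index cannot sit in the interior of a run of two or more consecutive zeros.

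With these bad configurations ruled out, a short case analysis on $(f_i, f_{i+1})$ locates the required equivalent admissible indices for $i \geq 1$. If $f_i > 0$ and $f_{i+1} > 0$, then $i$ itself is both right- and left-admissible. If $f_i > 0$ and $f_{i+1} = 0$, then the first exclusion (automatic when $i = 1$) gives $f_{i-1} = 0$, so $\{i\}$ is a trivial spread; here $i$ is right-admissible, $i-1$ is left-admissible, and $i \sim i-1$ because $f_{i-1} = f_{i+1} = 0$ gives $\ran{i}(f) = \ran{i-1}(f)$. Finally, if $f_i = 0$ and $f_{i+1} > 0$, the difference formula specializes to $\rev{i+1}(f) - \rev{i}(f) = i f_{i+2}$, so maximality forces $f_{i+2} = 0$ and $\{i+1\}$ is a trivial spread; then $i$ itself is left-admissible (since $f_{i+1} > 0$ and $f_i = f_{i+2} = 0$), and $i+1$ is right-admissible, with $i \sim i+1$ again because $f_i = f_{i+2} = 0$.

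The boundary index $i = 0$ is handled via the conventions $\rev{0} = \rev{1}$ and $\ran{0} = \ran{1}$, which give $0 \sim 1$ outright; when $f_1 > 0$ this reduces to the analysis above, and when $f_1 = 0$ the second exclusion applied at $i = 0$ forces $f_2 > 0$ and $f_3 = 0$, after which $\{2\}$ is a trivial spread, the identity $\ran{0}(f) = (f_3, f_4, \ldots) = (f_1, f_4, \ldots) = \ran{2}(f)$ verifies $0 \sim 2$, and $2$ is a right-admissible and $1$ a left-admissible partner of $0$. The main obstacle is precisely this low-index bookkeeping, since right- and left-admissibility differ only in how they attach an index to a trivial spread (position $i$ versus $i-1$) and the special conventions at $0$ prevent the endpoint case from being folded into the generic argument; once the two exclusions are in hand, however, the only nontrivial step is a one-position shift to a neighboring index, and the equivalence $\ran{i}(f) = \ran{j}(f)$ reduces in each subcase to comparing two zero entries of $f$.
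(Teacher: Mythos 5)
Your proof is correct and follows essentially the same route as the paper's: rule out a maximal index being the right end of a nontrivial spread or lying among consecutive zeros by comparing evaluations at neighbouring indices, then read off the admissible partner from the resulting trivial-spread configurations, with the $\rev{}$-equality in each equivalence being redundant given the $\ran{}$-equality. Your version merely makes explicit the difference formulas and the $i=0$ bookkeeping that the paper dispatches with ``since $\rev{0}=\rev{1}$ we may assume $i\ge 1$.''
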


\begin{proof}  Since $\rev{0}=\rev{1}$ we assume, with no loss of generality, that $i\ge 1$.

We first consider the case $f_i=0$.  Due to maximality of $\rev{i}(f)$ we cannot have $f_r=0$ for all $r>i$.  Let $j$ be the smallest index such that $j>i$ and $f_j>0$. From definition of $\rev{i}$ it is  clear that $\rev{i}(f)\le \rev{j}(f)$  with equality if and only if $j=i+1$ and $f_{i+2}=0$. Maximality of $\rev{i}(f)$ forces equality, in which case $i$ is left admissible and equivalent to the right admissible index $j=i+1$.

We now examine the case $f_i>0$.  If we also have $f_{i+1}>0$, then $i$ itself is both left and right admissible. Now suppose that $f_{i+1}=0$.
If $f_{i-1}>0$ then $\rev{i-1}(f)>\rev{i}(f)$, contradicting the maximality of $\rev{i}(f)$.  Hence we have $f_{i-1}=f_{i+1}=0$.  So $i$ is right admissible and equivalent to the left admissible index $k=i-1$.
\end{proof}

\begin{prop}\label{prop-maxeval}
Let $f \in \Fset$ with $f \neq (1)$. If index $i$ is left admissible and maximal for $f$, then $i$ is maximal for $\pt f$ and $\rev{i}(\pt f)=\rev{i}(f)-1$.
\end{prop}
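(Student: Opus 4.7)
The plan is to combine Proposition~\ref{prop-comdiagrams} with Lemma~\ref{lem:admissible_max}. Since $i$ is left admissible in $f$, the identity $\rev{i}(\pt f) = \rev{i}(f) - 1$ is immediate from Proposition~\ref{prop-comdiagrams}, so the whole task reduces to proving maximality of $i$ for $\pt f$. I decompose that into two assertions: $\rev{i}(\pt f) \neq 0$, and $\rev{j}(\pt f) \leq \rev{i}(\pt f)$ for every $j \in \N$.

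The nonvanishing $\rev{i}(\pt f) \neq 0$ is where the hypothesis $f \neq (1)$ enters. Left admissibility gives $f_{i+1} > 0$. If $i \geq 1$ then already $\rev{i}(f) \geq (i+1)f_{i+1} \geq 2$. If $i = 0$ then left admissibility also forces $f_2 = 0$, and $\rev{0}(f) = f_1 + 2\sum_{j > 1} f_j = 1$ would force $f_1 = 1$ and $f_j = 0$ for $j > 1$, i.e. $f = (1)$, which is excluded. Hence $\rev{i}(f) \geq 2$ and so $\rev{i}(\pt f) = \rev{i}(f) - 1 \geq 1$.

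For the inequality, let $j^\star$ be any index attaining the maximum of $\rev{\cdot}(\pt f)$ on $\N$. If that maximum is $0$ there is nothing to prove, so assume $j^\star$ is maximal for $\pt f$. By Lemma~\ref{lem:admissible_max}, applied to $\pt f$, there is an index $j'$ equivalent to $j^\star$ with respect to $\pt f$ that is right admissible in $\pt f$. Proposition~\ref{prop-comdiagrams} then yields $\rev{j'}(\pt f) = \rev{j'}(f) - 1$, and the maximality of $i$ for $f$ gives $\rev{j'}(f) \leq \rev{i}(f)$, so
\[
\rev{j^\star}(\pt f) \;=\; \rev{j'}(\pt f) \;=\; \rev{j'}(f) - 1 \;\leq\; \rev{i}(f) - 1 \;=\; \rev{i}(\pt f).
\]

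The main obstacle is precisely the middle step in this chain: a generic maximizer $j^\star$ need not itself be right admissible in $\pt f$, so Proposition~\ref{prop-comdiagrams} cannot be invoked at $j^\star$ directly. Lemma~\ref{lem:admissible_max} is the bridge that lets us swap $j^\star$ for an admissible representative of its equivalence class without changing the evaluation, and this swap is the only nontrivial manoeuvre in the argument.
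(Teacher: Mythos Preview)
Your proof is correct and follows essentially the same route as the paper: invoke Proposition~\ref{prop-comdiagrams} at the left admissible index $i$ to get $\rev{i}(\pt f)=\rev{i}(f)-1$, then use Lemma~\ref{lem:admissible_max} on $\pt f$ to replace a maximizer by a right admissible index where Proposition~\ref{prop-comdiagrams} again applies, closing the sandwich. The only cosmetic difference is that the paper handles non-triviality by noting $f\neq\emptyf$ and $f\neq(1)$ force $\pt f\neq\emptyf$, whereas you verify $\rev{i}(f)\geq 2$ directly; these are equivalent and your case ``maximum is $0$'' is in fact vacuous once that bound is in hand.
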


\begin{proof}
Let $m_f=\rev{i}(f)$ be the maximum evaluation for $f$, 
and let $m_{\pt f}$ be the maximum evaluation value for $\pt f$. Left admissibility of $i$ implies $f \neq \emptyf$, and since $f \neq (1)$ we have  $\pt f \neq \emptyf$.  Lemma~\ref{lem:admissible_max} ensures that $\rev{j}(\pt f)=m_{\pt f}$ for some $j$ that is right admissible in $\pt f$.  Proposition~\ref{prop-comdiagrams} then gives both
\begin{align*}
1+m_{\pt f} &= 1+\rev{j}(\pt f) =\rev{j}(f)\le m_{f} \intertext{and}
1+m_{\pt f} &\ge 1+\rev{i}(\pt f) = \rev{i}(f) = m_{f},
\end{align*}
so  the inequalities to be tight and  $m_{\pt f}=\rev{i}(\pt f) = \rev{i}(f)-1$.
\end{proof}

\subsection{Oblak Chains and the proof of Basili's Theorem}

The ingredients are now in place to assemble the proof of Theorem~\ref{thm:basili}.  We begin by defining an \emph{Oblak chain} for $f \in \Fset$ to be any sequence $(\obi{f}{0},\ldots,\obi{f}{k})$ that could be generated through the Oblak process applied to $f$.  (Compare with  Definition~\ref{alg:oblak}.)

\begin{defn}
Let $f \in \Fset$.
An \emph{Oblak chain} for $f$ is a sequence $C=(\obi{f}{0}, \obi{f}{1}, \ldots, \obi{f}{k})$  such that $\obi{f}{0}=f$, 
$\obi{f}{k} = \epsilon$ and for every $r=1,\ldots, k$ we have $\obi{f}{r} = \ran{i_r}(\obi{f}{r-1})$ for some index $i_r$ that is maximal for $\obi{f}{r-1}$. (Note that $i_r$ is unique up to equivalence in $\obi{f}{r-1}$.) Such a sequence of indices $(i_1,\ldots, i_k)$ is called an \emph{index sequence} for $C$. The  multiset  
\begin{align*}
\rev{}(C)&:=[\rev{i_1}(\obi{f}{0}),\ldots, \rev{i_k}(\obi{f}{k-1})] 
=[\,|\obi{f}{0}|-|\obi{f}{1}|,\ldots, |\obi{f}{k-1}|-|\obi{f}{k}|\,],
\end{align*}
is called the \emph{valuation} of $C$.  
\end{defn}

Evidently, if $C=(\obi{f}{0},\ldots,\obi{f}{k})$ is an Oblak chain for $f$ then $\rev{}(C)$ is a partition of size $|f|$ and no two of the $\obi{f}{r}$ are equal.  Note that $C=(\emptyf)$ is the unique Oblak chain for $\emptyf$, with valuation $\rev{}(C)=\emptyf$.

We extend $\pt$ to act on Oblak chains as follows.

\begin{defn}  Let $C=(\obi{f}{0},\ldots,\obi{f}{k})$ be an Oblak chain.  We define
\[
\pt C: =\begin{cases} (\pt\obi{f}{0},\ldots, \pt\obi{f}{k}), & \obi{f}{k-1} \neq (1) \\
(\pt\obi{f}{0},\ldots, \pt\obi{f}{k-1}), &  \obi{f}{k-1}=(1)
\end{cases}.
\]
\end{defn}

\begin{thm}\label{thm:oblakburge} Let $f \in \Fset$ and let $C$ be an Oblak chain for $f$. 
Then $\pt C$ is an Oblak chain for $\pt f$ and 
$\rev{}(\pt C)=\rev{}(C)-1$.  
\end{thm}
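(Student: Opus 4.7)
The plan is to induct on $|f|$ (equivalently on $k$, the length of $C$). The base case $f=\emptyf$ is immediate: the only Oblak chain is $C=(\emptyf)$, and $\pt C=(\emptyf)$ by convention, with both valuations equal to $\emptyf$. The degenerate case $f=(1)$ is also handled directly: here $C=((1),\emptyf)$, the definition forces $\pt C=(\emptyf)$, and $\rev{}(C)=[1]$ reduces to $\emptyf=\rev{}(\pt C)$ after subtracting $1$ and discarding the resulting zero.

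For the inductive step, assume $f\neq\emptyf,(1)$ and let $C=(\obi{f}{0},\ldots,\obi{f}{k})$ have index sequence $(i_1,\ldots,i_k)$. The first move is to apply Lemma~\ref{lem:admissible_max}: $i_1$ is equivalent (in $f$) to some left admissible index $j_1$. Since equivalence preserves both $\rev{\cdot}$ and $\ran{\cdot}$, we may replace $i_1$ by $j_1$ without disturbing $C$. Proposition~\ref{prop-maxeval} then yields that $j_1$ is maximal for $\pt f$ with $\rev{j_1}(\pt f)=\rev{j_1}(f)-1$, and Proposition~\ref{prop-comdiagrams} supplies the commutation $\ran{j_1}(\pt f)=\pt(\ran{j_1}(f))=\pt\obi{f}{1}$. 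Thus the step $\pt f \xrightarrow{\ran{j_1}} \pt\obi{f}{1}$ is the opening move of a valid Oblak chain for $\pt f$, and its contribution to the valuation is exactly $\rev{j_1}(f)-1$.

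Now apply the induction hypothesis to the subchain $C':=(\obi{f}{1},\ldots,\obi{f}{k})$, which is an Oblak chain for $\obi{f}{1}$ of strictly smaller size. This gives that $\pt C'$ is an Oblak chain for $\pt\obi{f}{1}$ with $\rev{}(\pt C')=\rev{}(C')-1$. Concatenating our opening step with $\pt C'$ produces an Oblak chain for $\pt f$ whose valuation is $\rev{}(C)-1$. A short case check — comparing with the definition of $\pt C$ in the two scenarios $\obi{f}{k-1}\neq(1)$ versus $\obi{f}{k-1}=(1)$ — confirms that this concatenation coincides with $\pt C$, with the truncation in the second case matching the disappearance of a terminal part equal to $1$ in the valuation.

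The main obstacle is not the induction itself but the technical gluing between maximality and admissibility: Propositions~\ref{prop-maxeval} and \ref{prop-comdiagrams} are phrased in terms of \emph{left admissibility}, whereas the Oblak process chooses only a \emph{maximal} index, and these notions need not coincide. The key conceptual observation is that Lemma~\ref{lem:admissible_max} bridges this gap by letting us freely upgrade any maximal index to a left-admissible equivalent one, which is essential for feeding the chain into $\pt$ term-by-term. A secondary delicacy is the compatibility between the two-branched definition of $\pt C$ and the reduction rule ``$-1$'' on the valuation, which is exactly the behaviour observed for the descent map (cf.\ Lemma~\ref{lem:uniqueness}), foreshadowing the identification $\oblak(f(P))=\Des{P}=\D(P)$ that will yield Theorem~\ref{thm:basili}.
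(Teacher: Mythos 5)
Your proof is correct and uses exactly the same ingredients as the paper's: Lemma~\ref{lem:admissible_max} to upgrade each maximal index to an equivalent left-admissible one, Proposition~\ref{prop-comdiagrams} for the commutation and the ``$-1$'' on evaluations, and Proposition~\ref{prop-maxeval} to preserve maximality, with the same terminal case split at $\obi{f}{k-1}=(1)$. The only difference is organizational — you peel off the first step and induct on $|f|$, whereas the paper applies the three results to all steps $r=1,\ldots,k$ simultaneously — so this is essentially the paper's proof.
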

\begin{proof} 
Let $C=(\obi{f}{0},\ldots,\obi{f}{k})$ and suppose $(i_1,\ldots,i_k)$ is an index sequence for $C$. By Lemma~\ref{lem:admissible_max}, each $i_r$ is equivalent in $\obi{f}{r-1}$ to a left admissible index $j_r$.  
Proposition \ref{prop-comdiagrams}  gives  
\begin{align*}
\pt\obi{f}{r}&=
	\pt(\ran{j_r}(\obi{f}{r-1}))=\ran{j_r}(\pt\obi{f}{r-1}), \\
\rev{j_r}(\pt\obi{f}{r-1})&=\rev{j_r}(\obi{f}{r-1})-1,
\end{align*}
and, if $\obi{f}{r-1} \neq (1)$ then  Proposition \ref{prop-maxeval} ensures $j_r$ is maximal for $\pt(\obi{f}{r-1})$.  

The condition $\obi{f}{r-1}=(1)$ implies $\obi{f}{r}=\emptyf$ and is therefore only possible at $r=k$. Moreover, it is clear that  $\rev{j_r}(\obi{f}{r-1})=1$ if and only if $\obi{f}{r-1}=(1)$. So  if indeed $\obi{f}{k-1} = (1)$, then 
$\pt C = (\pt\obi{f}{0},\ldots, \pt\obi{f}{k-1})$ is an Oblak chain for $\pt f$ with index sequence $(j_1,\ldots,j_{k-1})$ and valuation
\begin{align*}
	\rev{}(\pt C)&=[\rev{j_1}(\obi{f}{0})-1,
	\ldots,
	\rev{j_{k-1}}(\obi{f}{k-2})-1] \\
	&= [\rev{j_1}(\obi{f}{0}),
	\ldots,
	\rev{j_{k-1}}(\obi{f}{k-2}), 1]-1 \\
	&= \rev{}(C)-1.
\end{align*}
Otherwise, $\pt C = (\pt\obi{f}{0},\ldots, \pt\obi{f}{k})$ is an Oblak chain for $\pt f$ with index sequence $(j_1,\ldots,j_k)$ and valuation 
$\rev{}(\pt C)=[\rev{j_1}(\obi{f}{0})-1,
	\ldots,
	\rev{j_{k}}(\obi{f}{k-1})-1] = \rev{}(C)-1.$
\end{proof}

\begin{exmp}
\label{exmp:bigexample}
Let $P=[1,2^3,5,10,14]$ and $f=f(P)$.  Figure~\ref{fig:bigexample} illustrates the  interaction between the Oblak chains (displayed horizontally) and Burge chains (displayed vertically) obtained by repeated application of  Theorem~\ref{thm:oblakburge}, starting with a particular Oblak chain for $f$.  The labels above the horizontal arrows list the equivalent maximal indices at each step, with the left admissible ones in bold. The labels below the horizontal arrows are the corresponding evaluations, which are seen to decrease by one row-by-row down each column.

\begin{figure}
{\scriptsize
\[ \begin{tikzcd}
(1, 3, 0, 0, 1, 0, 0, 0, 0, 1, 0, 0, 0, 1)\adp\ara{\mathbf{13}, 14} \arb{14}  & (1, 3, 0, 0, 1, 0, 0, 0, 0, 1)\adp\ara{0, \mathbf{1}} \arb{11}  & (0, 0, 1, 0, 0, 0, 0, 1)\adp\ara{\mathbf{7}, 8} \arb{8}  & (0, 0, 1)\adp\ara{\mathbf{2}, 3} \arb{3}  & \adp\emptyf\\ 
(2, 2, 0, 1, 0, 0, 0, 0, 1, 0, 0, 0, 1) \adp\ara{\mathbf{12}, 13} \arb{13}  & (2, 2, 0, 1, 0, 0, 0, 0, 1) \adp\ara{0, \mathbf{1}} \arb{10}  & (0, 1, 0, 0, 0, 0, 1) \adp\ara{\mathbf{6}, 7} \arb{7}  & (0, 1) \adp\ara{0, \mathbf{1}, 2} \arb{2}  & \adp\emptyf\\ 
(3, 1, 1, 0, 0, 0, 0, 1, 0, 0, 0, 1) \adp\ara{\mathbf{11}, 12} \arb{12}  & (3, 1, 1, 0, 0, 0, 0, 1) \adp\ara{0, \mathbf{1}} \arb{9}  & (1, 0, 0, 0, 0, 1) \adp\ara{\mathbf{5}, 6} \arb{6}  & (1) \adp\ara{\mathbf{0}, 1} \arb{1}  & \emptyf \\ 
(2, 2, 0, 0, 0, 0, 1, 0, 0, 0, 1) \adp\ara{\mathbf{10}, 11} \arb{11}  & (2, 2, 0, 0, 0, 0, 1) \adp\ara{0, \mathbf{1}} \arb{8}  & (0, 0, 0, 0, 1) \adp\ara{\mathbf{4}, 5} \arb{5}  & \emptyf \adp\\ 
(3, 1, 0, 0, 0, 1, 0, 0, 0, 1) \adp\ara{\mathbf{9}, 10} \arb{10}  & (3, 1, 0, 0, 0, 1) \adp\ara{0, \mathbf{1}} \arb{7}  & (0, 0, 0, 1) \adp\ara{\mathbf{3}, 4} \arb{4}  & \adp \emptyf\\ 
(4, 0, 0, 0, 1, 0, 0, 0, 1) \adp\ara{\mathbf{8}, 9} \arb{9}  & (4, 0, 0, 0, 1) \adp\ara{\mathbf{0}, 1} \arb{6}  & (0, 0, 1) \adp\ara{\mathbf{2}, 3} \arb{3}  & \adp\emptyf \\ 
(3, 0, 0, 1, 0, 0, 0, 1) \adp\ara{\mathbf{7}, 8} \arb{8}  & (3, 0, 0, 1) \adp\ara{\mathbf{0}, 1} \arb{5}  & (0, 1) \adp\ara{0, \mathbf{1}, 2} \arb{2}  &  \adp\emptyf\\ 
(2, 0, 1, 0, 0, 0, 1) \adp\ara{\mathbf{6}, 7} \arb{7}  & (2, 0, 1) \adp\ara{\mathbf{0}, 1} \arb{4}  & (1) \adp\ara{\mathbf{0}, 1} \arb{1}  & \emptyf\\ 
(1, 1, 0, 0, 0, 1) \adp\ara{\mathbf{5}, 6} \arb{6}  & (1, 1) \adp\ara{0, \mathbf{1}} \arb{3}  & \emptyf \adp\\ 
(2, 0, 0, 0, 1) \adp\ara{\mathbf{4}, 5} \arb{5}  & (2) \adp\ara{\mathbf{0}, 1} \arb{2}  & \emptyf \adp\\ 
(1, 0, 0, 1) \adp\ara{\mathbf{3}, 4} \arb{4}  & (1) \adp\ara{\mathbf{0}, 1} \arb{1}  & \emptyf \\ 
(0, 0, 1) \adp\ara{\mathbf{2}, 3} \arb{3}  & \adp \emptyf\\ 
(0, 1) \adp\ara{0, \mathbf{1}, 2} \arb{2}  & \emptyf \adp\\ 
(1) \adp\ara{\mathbf{0}, 1} \arb{1}  & \emptyf\\ 
\emptyf
\end{tikzcd}
\]
}
\caption{An illustration of the iterated application of Theorem~\ref{thm:oblakburge}. (See Example~\ref{exmp:bigexample}.)}
\label{fig:bigexample}
\end{figure}
\end{exmp}

The following corollary of Theorem~\ref{thm:oblakburge} says that the output of the Oblak process is  not dependent on the choices of maximal indices made along the way. That is, we have recovered Proposition~\ref{prop:khatami}.

\begin{cor}[Proposition~\ref{prop:khatami}]
\label{cor:khatami2}
Let $f \in \Fset$.  Any two Oblak chains for $f$ have the same valuation.
\end{cor}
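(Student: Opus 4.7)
The plan is to proceed by strong induction on $|f|$, with Theorem~\ref{thm:oblakburge} doing all the heavy lifting. The base case $f=\emptyf$ is trivial since $(\emptyf)$ is the unique Oblak chain.

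For the inductive step, suppose the result holds for every $g \in \Fset$ with $|g| < |f|$, and let $C_1, C_2$ be two Oblak chains for $f \neq \emptyf$. By Theorem~\ref{thm:oblakburge}, $\pt C_1$ and $\pt C_2$ are both Oblak chains for $\pt f$, and satisfy $\rev{}(\pt C_i) = \rev{}(C_i) - 1$ for $i=1,2$. Since $|\pt f| < |f|$ (because $f \neq \emptyf$), the induction hypothesis applies to $\pt f$, giving $\rev{}(\pt C_1) = \rev{}(\pt C_2)$, and therefore $\rev{}(C_1) - 1 = \rev{}(C_2) - 1$.

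The only remaining step is to argue that this equality of reduced valuations, together with $|\rev{}(C_1)| = |f| = |\rev{}(C_2)|$, forces $\rev{}(C_1) = \rev{}(C_2)$. This is the one step that needs a small observation: if $Q$ is any partition with $k$ parts equal to 1 and $m$ parts strictly greater than 1, then $Q-1$ has exactly $m$ parts, of total size $|Q|-k-m$. Hence $Q-1$ determines both $m$ and the multiset of parts of $Q$ exceeding 1, while $|Q|$ then determines the number $k$ of parts equal to 1. So $Q$ is recovered from the pair $(Q-1, |Q|)$. Applying this with $Q = \rev{}(C_i)$ completes the inductive step and hence the proof.

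The only potentially delicate point is the recovery lemma for partitions in the last paragraph, but it is elementary; all the real work has already been done in Theorem~\ref{thm:oblakburge}, whose proof in turn rests on Propositions~\ref{prop-comdiagrams} and~\ref{prop-maxeval}. Once the induction machinery is set up, the argument is a short three-line deduction.
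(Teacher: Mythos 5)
Your proof is correct and is essentially identical to the paper's: induction on $|f|$, with Theorem~\ref{thm:oblakburge} supplying that $\pt C_1,\pt C_2$ are Oblak chains for $\pt f$ with valuations $\rev{}(C_i)-1$, followed by the observation that a partition is recoverable from its reduction and its size. The paper makes the same recovery argument in a parenthetical remark at the end of its proof, so there is nothing to add.
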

\begin{proof}  We proceed by induction on the size of $f$.  The statement is obvious if $f=\emptyf$.  Suppose $f \neq \emptyf$ and let $C_1, C_2$ be Oblak chains for $f$.  Then Theorem~\ref{thm:oblakburge} implies $\pt C_1$ and $\pt C_2$ are Oblak chains for $\pt f$, with $\rev{}(\pt C_i)=\rev{}(C_i)-1$.  The inductive hypothesis gives $\rev{}(\pt C_1)=\rev{}(\pt C_2)$, hence $\rev{}(C_1)-1=\rev{}(C_2)-1$.  But  this implies $\rev{}(C_1)=\rev{}(C_2)$, since $\rev{}(C_1)$ and $\rev{}(C_2)$ are of common size $|f|$.   (In particular, if $\rev{}(C_i)-1=[1^{m_1},2^{m_2},\ldots]$ then $\rev{}(C_i)=[1^k, 2^{m_1}, 3^{m_2},\ldots]$, where $k=|f|-\sum_{i \geq 2} (i+1)m_i$.) 
\end{proof}

With Corollary~\ref{cor:khatami2} in hand we can unambiguously regard $\rev{}$ as a function from $\Fset$ to $\Pset$ by setting $\rev{}(f):=\rev{}(C)$, where $C$ is any Oblak chain for $f$.  Of course this identifies $\rev{}(f)$ with $\oblak(f)$, and we arrive at Basili's Theorem  as promised.

\begin{cor}[Theorem~\ref{thm:basili}]
$\D(P)=\rev{}(f(P))$ for any $P \in \Pset$.
\end{cor}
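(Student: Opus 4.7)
The plan is to invoke the universality of the descent map, exactly as in the proof following Theorem~\ref{partial(P) = P(B_W)}. By Theorem~\ref{thm:main} we already know that $\D(P)=\Des{P}$ for every $P \in \Pset$, so it suffices to prove $\rev{}(f(P))=\Des{P}$, and for this we will verify the hypotheses of Lemma~\ref{lem:uniqueness} for the map $\genop(P):=\rev{}(f(P))$.

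First, I would note that Corollary~\ref{cor:khatami2} makes $\rev{}$ a well-defined function $\Fset\to\Pset$ (independent of the chosen Oblak chain), and that condition (1) of Lemma~\ref{lem:uniqueness} — namely $|\genop(P)|=|P|$ — is immediate from the definition of the valuation of an Oblak chain $C=(\obi{f}{0},\ldots,\obi{f}{k})$: its entries telescope to $|\obi{f}{0}|-|\obi{f}{k}|=|f|$.

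The remaining condition (2), $\genop(\pt P)=\genop(P)-1$, is exactly the content of Theorem~\ref{thm:oblakburge}. Indeed, given any Oblak chain $C$ for $f=f(P)$, that theorem furnishes an Oblak chain $\pt C$ for $\pt f=f(\pt P)$ satisfying $\rev{}(\pt C)=\rev{}(C)-1$; by Corollary~\ref{cor:khatami2} this identity transfers to the well-defined values $\rev{}(f(\pt P))=\rev{}(f(P))-1$. Since the class $\Pset$ is $\pt$-invariant, Lemma~\ref{lem:uniqueness} now applies and yields $\genop(P)=\Des{P}$, and combining with Theorem~\ref{thm:main} gives $\D(P)=\rev{}(f(P))$.

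There is no real obstacle here: all the work has been done in establishing Theorem~\ref{thm:oblakburge} and Corollary~\ref{cor:khatami2}. The only thing to double-check is that no hidden appeal to Basili's Theorem~\ref{thm:basili} has been made in the preceding development — but Theorem~\ref{thm:oblakburge} was proved using only Propositions~\ref{prop-comdiagrams} and \ref{prop-maxeval} together with Lemma~\ref{lem:admissible_max}, none of which rely on $\D$ at all, so the argument is genuinely independent of Basili \cite{Bas-I}.
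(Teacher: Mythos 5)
Your proposal is correct and follows exactly the paper's own argument: apply Lemma~\ref{lem:uniqueness} to $\genop(P)=\rev{}(f(P))$, using the telescoping of the valuation for size-preservation and Theorem~\ref{thm:oblakburge} (with Corollary~\ref{cor:khatami2} for well-definedness) for the relation $\genop(\pt P)=\genop(P)-1$, then conclude via Theorem~\ref{thm:main}. The extra care you take in confirming the argument's independence from Basili's result is a valid observation but not an additional mathematical step.
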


\begin{proof}
Let $\rev{}(P):=\rev{}(f(P))$. We know $|\rev{}(P)|=|P|$ and according to Theorem~\ref{thm:oblakburge} we have $\rev{}(\pt P) = \rev{}(P)-1$.  Hence Lemma~\ref{lem:uniqueness} gives $\rev{}(P)=\Des{P}$, and the result follows by Theorem~\ref{thm:main}.
\end{proof}

\section*{Acknowledgments}
T.~Ko\v{s}ir was supported in part by the Research and Innovation Agency of Slovenia (research core funding P1-0222, research projects N1-0154 and J1-3004 until December 31, 2023, and research core funding P1-0448 and research project N1-0217 after January 1, 2024).
M.~Mastnak was supported in part by the NSERC Discovery Grant 371994-2019.

Most of research presented in the paper was done during an extended visit of T. Ko\v sir at Saint Mary's University in Halifax. He is truly grateful for the hospitality he received during his stay.

\bibliographystyle{plain}

\end{document}